\documentclass[12pt]{article}
\usepackage[english]{babel}
\usepackage{graphicx}
\usepackage{framed}
\usepackage[normalem]{ulem}
\usepackage{amsmath}
\usepackage{amsthm}
\usepackage[utf8]{inputenc}
\usepackage{amssymb}
\usepackage{amsfonts}
\usepackage{tikz-cd}
\usepackage{enumerate}
\usepackage{tikz}
\usepackage[top=1 in,bottom=1in, left=1 in, right=1 in]{geometry}

\usepackage{dynkin-diagrams}
\usetikzlibrary{decorations.markings}

\usepackage{hyperref}

\newcommand{\A}{\mathcal{A}}
\newcommand{\B}{\mathcal{B}}
\newcommand{\C}{\mathcal{C}}

\newcommand{\s}{\mathcal{O}}
\newcommand{\Q}{\mathbb{Q}}
\newcommand{\Z}{\mathbb{Z}}

\renewcommand{\emptyset}{\varnothing}

\theoremstyle{plain}
\newtheorem{theorem}{Theorem}[section] 

\theoremstyle{definition}
\newtheorem{prop}[theorem]{Proposition}

\newtheorem{Remark}[theorem]{Remark}

\newtheorem{cor}[theorem]{Corollary}

\newtheorem{defn}[theorem]{Definition}

\usepackage{mathtools}
\usepackage{multirow}

\tikzstyle{vertex} = [fill,shape=circle,node distance=80pt]
\tikzstyle{edge} = [fill,opacity=.5,fill opacity=.5,line cap=round, line 
join=round, line width=50pt]
\tikzstyle{elabel} =  [fill,shape=circle,node distance=30pt]

\setlength{\columnseprule}{1 pt}

\title{Distinction of the Steinberg representation and the dual group of a symmetric space}
\author{Guy Shtotland }
\date{}

\begin{document}

\maketitle
\setlength\parindent{0pt}

\begin{abstract}
We study the distinction of the Steinberg representation of a split reductive 
group $G$ with respect to a split symmetric subgroup $H \subset G$.  We relate 
this distinction problem to a problem about the existence of a non-zero 
harmonic function on a certain hyper-graph related to $X = G/H$. We verify the 
relative local Langlands conjecture for the Steinberg representation by showing 
that over a $p$-adic field the Steinberg representation is $H$-distinguished if 
and only if its Langlands parameter factors through the dual group of $X$.
\end{abstract}

\begin{section}{Introduction}

A fundamental problem in relative representation theory of a reductive group $G$ with 
a symmetric subgroup $H$, is the study of $H$-distinguished $G$ representations.
 A representation $\pi$ of $G$ is called $H$-distinguished if 
$Hom_H(\pi,\mathbb{C})\neq 0$. In this paper, we study the distinction of a 
special representation of $G$ called the Steinberg representation. We denote 
this representation by $St$. The Steinberg representation is defined both over 
a finite field and over a $p$-adic field.

 Our first result is a necessary condition for the distinction of $St$. The 
condition is that the symmetric space $X=G/H$ has to be quasi-split (see 
Definition \ref{qs}).

We also give a sufficient condition for distinction of $St$ under the further 
assumption of $H$ being split. We prove that in the case of a finite field, $X$ 
being quasi-split is also a sufficient condition.

In the case of a $p$-adic field, this is not a sufficient condition. In the 
example given by $G=GL_{2n+1}(F)$ and $H=GL_n(F)\times GL_{n+1}(F)$ for a $p$-adic
 field $F$, the space $X=G/H$ is quasi-split but $St$ is not $H$ 
distinguished. We prove that this example is essentially the unique such example
(see Theorem \ref{local field}). We give an interpretation 
of this result using the dual group of $X$ (see Theorem \ref{SV conjecture}).

\begin{subsection}{Main results}
Let $F$ be a field that is either a non archimedean local field of residual 
characteristic not 2, or a finite field of characteristic not 2.

Let $\mathbf{G}$ be a connected reductive group split over $F$ and let 
$G=\mathbf{G}(F)$. 
Let $\sigma:\mathbf{G}\rightarrow \mathbf{G}$ be an algebraic involution 
defined over $F$. We denote by $\mathbf{H}$ the fixed points of $\sigma$, and 
by $H=\mathbf{H}(F)$ the $F$-points of $\mathbf{H}$. We call
$X=G/H$ a symmetric space.

We discuss both the case of the Steinberg representation over a finite field 
and over a non archimedean local field.

In the case of $F$ being a finite field, there is a unique irreducible 
representation of $G$ called the Steinberg representation (introduced in 
\cite{St51}) denoted by $St$. In the case of $F$ being a local field, for any 
unramified character $\chi:G\rightarrow\mathbb{C}^\times$ there is a 
corresponding Steinberg representation  denoted by $St_{\chi}$ (see 
\cite{da054e0c-e380-3be7-b7aa-c246b458cfd4}).

An important notion that appears repeatedly in this work is the notion of 
$X$ being quasi-split.

\label{qs}
\begin{defn}
    We say that $X$ is quasi-split if $G$ has a Borel subgroup $B$ such that 
$\sigma(B)\cap B$ is a maximal torus of $B$.
\end{defn}

In the case of a finite field we prove the following.

\begin{theorem}\label{finte field} Let $F$ be a finite field. Under the 
assumption that both $G$ and $H$ are split over $F$, $St$ is $H$ distinguished 
if and only if $X$ is quasi-split. 
\end{theorem}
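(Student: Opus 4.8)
The plan is as follows. Since the necessity of quasi-splitness for $H$-distinction of $St$ has already been established, it suffices to prove the converse: if $X=G/H$ is quasi-split then $\mathrm{Hom}_H(St,\mathbb{C})\neq 0$. I would first translate the problem into combinatorics of the Tits building $\Delta$ of $G$ over $\mathbb{F}_q$. The Steinberg representation over a finite field is self-dual, so $\mathrm{Hom}_H(St,\mathbb{C})\cong St^{H}$; by Solomon--Tits, $St$ is realized as the space of harmonic cochains on $\Delta$, i.e. functions on the chamber set $G/B$ annihilated by every panel-summation map. Taking $H$-invariants is exact over $\mathbb{C}$, so $St^{H}$ is the space of $H$-invariant harmonic cochains, equivalently the space of harmonic functions on the quotient hypergraph $H\backslash\Delta$ whose vertices are the $H$-orbits on $G/B$ and whose weighted hyperedges are the $H$-orbits on panels. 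Alternatively, combining the identity $St=\sum_{I\subseteq\Delta}(-1)^{|I|}\,\mathrm{Ind}_{P_I}^{G}\mathbf{1}$ (standard parabolics, $P_\emptyset=B$, $P_\Delta=G$) with Frobenius reciprocity and Mackey's formula gives the closed expression
\[
\dim\mathrm{Hom}_H(St,\mathbb{C})\;=\;\sum_{I\subseteq\Delta}(-1)^{|I|}\,\#\!\left(H\backslash G/P_I\right),
\]
which is, up to sign, the reduced Euler characteristic of $H\backslash\Delta$. Thus the theorem reduces to showing this a priori nonnegative integer is $\geq 1$ precisely when $X$ is quasi-split.

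The second ingredient is the structure theory of symmetric spaces. Quasi-splitness of $X$ is equivalent to the minimal $\sigma$-split parabolic of $G$ being a Borel subgroup, equivalently to $M:=Z_G(A)$ being a maximal torus, where $A$ is a maximal $\sigma$-split torus; in that case there is a parabolic $P=MN$ with $\sigma(P)$ opposite to $P$, and $HP$ is the open $H$--$P$ double coset, with $H\cap N=1$ and $H\cap P=M^{\sigma}$. I would then feed into the alternating sum the classification of $H$-orbits on the partial flag varieties $G/P_I$ (Springer, Helminck--Wang), expressing $\#(H\backslash G/P_I)$ through $\sigma$-twisted Weyl-group data relative to $(W,\Delta)$. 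Here the hypothesis that $H$ is split over $F$ is used to guarantee that the $\mathbb{F}_q$-rational $H$-orbits coincide with the geometric ones, so that these double-coset numbers are the ``expected'' (in fact $q$-independent) integers predicted by the geometry of $X$.

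With this in hand, the final and decisive step is the non-vanishing. I see two routes. Combinatorially, one orders the $H$-orbits on $G/B$ by the Bruhat order on their relative position $\rho\colon G/B\to W$ (the position of $gB$ relative to $\sigma(gB)$, which is $H$-invariant and lands in the set of $\sigma$-twisted involutions); the panel-incidence matrix of $H\backslash\Delta$ is then controlled by this order, and the harmonic equations can be solved recursively, or a sign-reversing involution can be set up on the index set of $\sum_I(-1)^{|I|}\#(H\backslash G/P_I)$ so that all terms cancel except a positive residue attached to the open $H$-orbit on $G/B$, the one of relative position $w_0$; this residue is present exactly when $w_0$ is attained, i.e. when $X$ is quasi-split. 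Representation-theoretically, one uses Harish-Chandra restriction: since $M$ is a torus, $r_P(St_G)=St_M=\mathbf{1}_M$, so $St_G$ occurs with multiplicity one in $\mathrm{Ind}_P^{G}\mathbf{1}_M=\mathbb{C}[P\backslash G]$, Mackey produces the open-orbit period $L\in\mathrm{Hom}_H(\mathbb{C}[P\backslash G],\mathbb{C})$ attached to the coset $HP$ and the functional $\mathrm{Hom}_{M^{\sigma}}(\mathbf{1}_M,\mathbb{C})=\mathbb{C}$, and one must check that $L$ does not vanish on the $St_G$-constituent.

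I expect this last point --- the non-vanishing of the open-orbit contribution --- to be the main obstacle. Alternating sums of orbit counts cancel generically, and a period functional built from an open orbit can perfectly well die on a given subrepresentation, so the substance of the theorem is exactly that the quasi-split hypothesis is what forces a surviving positive term (and, in accordance with the already-proved necessary condition, that nothing survives otherwise). The dependence on $\mathbb{F}_q$ rather than $\overline{\mathbb{F}}_q$ --- showing the rational double-coset counts are the geometric ones --- is a secondary technical point, and is precisely where the splitness of $H$ enters.
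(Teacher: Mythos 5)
Your reduction of the problem is sound and matches the paper's starting point: self-duality of $St$ plus the Hecke-algebra (or Solomon--Tits) picture identifies $\mathrm{Hom}_H(St,\mathbb{C})$ with harmonic functions on the hypergraph of $B$-orbits on $X$, equivalently gives the alternating sum $\sum_{I}(-1)^{|I|}\#(H\backslash G/P_I)$. But the proposal stops exactly where the theorem lives. Neither of your two routes establishes the non-vanishing: the ``sign-reversing involution leaving a positive residue at the open orbit'' is never constructed and no reason is given that such an involution exists, and the Harish-Chandra/open-orbit route ends with ``one must check that $L$ does not vanish on the $St$-constituent,'' which is precisely the sufficiency direction being claimed. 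You yourself flag this as the main obstacle, so as it stands this is a plan, not a proof. The paper's mechanism is concrete and differently located: by Prasad's criterion, $X$ is quasi-split if and only if there is a \emph{closed} Borel orbit over $\overline{F}$ that is full (every minimal parabolic above it moves it), and one then builds a nonzero harmonic function by constructing a support function $s\colon B\backslash X\to \mathbb{Z}[S]$ ($S$ the set of full closed orbits) by induction on the Bruhat order; well-definedness is reduced to rank-two Levi subgroups and verified case by case for the symmetric spaces of $PGL_2\times PGL_2$, $PGL_3$, $PSp_4$ and $G_2$. So the surviving contribution is attached to full closed orbits rather than to the open orbit of relative position $w_0$, and its existence is a genuine case analysis, not a formal cancellation one can expect to fall out of the alternating sum.

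There is also a flaw in what you call the secondary technical point. The claim that splitness of $H$ forces the $F$-rational double-coset counts $\#(H\backslash G/P_I)$ to coincide with the geometric ones (and to be $q$-independent) is false: the paper's own remark on $SL_2/\mathbb{G}_m$ shows $\dim\mathcal{H}(\Gamma_F(X))=3$ while there are only two full closed geometric orbits, because geometric orbits can split into several rational ones even for split $H$. What splitness actually buys (via the cited result of Henderson type used in the paper) is only \emph{surjectivity} of the map $\pi\colon\Gamma_F(X)\to\Gamma_{\overline{F}}(X)$; the paper then transfers harmonicity by the weighted pullback $\pi^{!}(\phi)(x)=m(x)\,\phi(\pi(x))$, where $m(x)$ counts dense rational orbits above $x$, and proving that $\pi^{!}(\phi)$ is harmonic requires the isomorphism of the subgraph below a dense rational orbit with $\Gamma_{\overline{F}}(X)$. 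Both the decisive non-vanishing step and this rationality step need arguments your proposal does not supply, so the argument has a genuine gap in its core.
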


Now, we discuss the case of $F$ being a non archimedean local field. Let $\s$ 
be the ring of integers of $F$. We assume that both $G$ and $\sigma$ are 
defined over $\s$.

In this case, $X$ being quasi-split is not a sufficient condition for the 
distinction of the Steinberg representation. This can be seen from the 
following proposition.

\begin{prop}
    Let $F$ be a non archimedean local field. Let $G=GL_{2n+1}(F)$, 
$H=GL_n(F)\times GL_{n+1}(F)$, and $X=G/H$. We have the following:
    \begin{enumerate}
        \item $X$ is quasi-split.
        \item For any unramified character 
$\chi:G\rightarrow\mathbb{C}^\times$, the $G$ representation $St_\chi$ is not 
$H$-distinguished.
    \end{enumerate}
\end{prop}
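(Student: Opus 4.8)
For the first assertion I would produce a Borel subgroup witnessing quasi-splitness by hand. Write the involution as $\sigma(g)=\varepsilon g\varepsilon^{-1}$ with $\varepsilon=\mathrm{diag}(I_n,-I_{n+1})$, so that $H=\mathbf G^\sigma(F)$, and denote by $\langle e_1,\dots,e_n\rangle$ and $\langle f_1,\dots,f_{n+1}\rangle$ the $(+1)$- and $(-1)$-eigenspaces of $\varepsilon$ on $F^{2n+1}$. Let $\mathcal F$ be the complete flag attached to the ordered basis
\[
v_{2k-1}=f_k+e_k\ \ (k=1,\dots,n),\qquad v_{2k}=e_k+f_{k+1}\ \ (k=1,\dots,n),\qquad v_{2n+1}=f_1 .
\]
A direct check shows $\mathcal F_i\oplus(\varepsilon\mathcal F)_{2n+1-i}=F^{2n+1}$ for every $i$ — the place where $2\neq 0$ in $F$ enters — so $\mathcal F$ and $\varepsilon\mathcal F=\sigma(\mathcal F)$ are in opposite position, and hence $B:=\mathrm{Stab}_G(\mathcal F)$ satisfies $B\cap\sigma(B)=B\cap\mathrm{Stab}_G(\varepsilon\mathcal F)=$ the common maximal torus. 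This is quasi-splitness. (Conceptually: the minimal $\sigma$-split parabolic of $GL_{p+q}/(GL_p\times GL_q)$ is a Borel exactly when $|p-q|\le 1$, and here $|n-(n+1)|=1$.)

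For the second assertion the plan is to reduce distinction of $St_\chi$ to a purely combinatorial question via the Steinberg resolution
\[
0\to St_\chi\to i_B^G(\chi)\to\bigoplus_{|S|=1}i_{P_S}^G(\chi)\to\cdots\to i_G^G(\chi)\to 0 ,
\]
indexed by subsets $S$ of the simple roots (the terms $i_{P_S}^G(\chi)$ being normalized inductions of the appropriate unramified twists, the rightmost term being $\chi$ itself). Applying $\mathrm{Hom}_H(-,\mathbb C)$ and using Mackey theory, each $\mathrm{Hom}_H(i_{P_S}^G(\chi),\mathbb C)$ is governed by the finite set of $(P_S,H)$-double cosets $P_S x H$ subject to the relevance condition $\delta_{P_S}|_{P_S\cap{}^xH}=\delta_{P_S\cap{}^xH}$; these data, together with the connecting maps of the resolution, are precisely the hypergraph attached to $X=G/H$. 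For $G=GL_{2n+1}(F)$ and $H=GL_n(F)\times GL_{n+1}(F)$ the relevant double cosets are the relative positions of a partial flag against the fixed decomposition $F^{2n+1}=F^n\oplus F^{n+1}$, which I would enumerate explicitly (integer arrays with prescribed margins), decide which are relevant, and thereby write down the hypergraph.

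The heart of the matter is then to show that this hypergraph carries no nonzero $\chi$-harmonic function for any unramified $\chi$ — equivalently, that the top cohomology of the $\mathrm{Hom}_H$-complex above vanishes. Here the asymmetry of the two blocks is decisive: for the even symmetric space $GL_{2n}/(GL_n\times GL_n)$ the analogous hypergraph contains a cycle (in agreement with the fact that $St_{GL_{2n}}$ does carry a linear period, its standard parameter $\mathrm{Sym}^{2n-1}$ being symplectic), whereas for $GL_{2n+1}/(GL_n\times GL_{n+1})$ I would show the hypergraph is a tree — it has a pendant vertex at which the harmonic condition forces the value $0$, and one then propagates this vanishing along the tree. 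I expect two main obstacles. First, the orbit bookkeeping needed to pin down the hypergraph and the differentials precisely: routine in principle but combinatorially delicate. Second, and more serious, controlling the higher terms $\mathrm{Ext}^{q}_H(i_{P_S}^G(\chi),\mathbb C)$ with $q>0$, which a priori also feed into $\mathrm{Hom}_H(St_\chi,\mathbb C)$ through the spectral sequence of the resolution; I would handle these either by a d\'evissage to proper $\sigma$-stable Levi pairs where the relevant Ext groups are known to vanish, or by working instead with the realization of $St_\chi$ as the top compactly supported cohomology of the Bruhat-Tits building, which turns the spectral sequence into a single long exact sequence and isolates the one cohomological degree that matters. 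The conceptual summary of the outcome is that the principal unipotent (Steinberg) parameter of $GL_{2n+1}$, being orthogonal, does not factor through the dual group of $X$, which is of symplectic type — consistently with the relative Langlands picture established later in the paper.
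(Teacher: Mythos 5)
Your explicit witness for part 1 does not do what you claim once $n\ge 2$. Indeed $v_1-v_2=f_1-f_2$ lies in $\mathcal F_2$, while $\varepsilon(v_2-v_1)=f_1-f_2$ lies in $\varepsilon\mathcal F_2\subseteq\varepsilon\mathcal F_{2n-1}$, so $\mathcal F_2\cap\sigma(\mathcal F)_{2n+1-2}\neq 0$ and $B\cap\sigma(B)$ contains a root subgroup; the same failure occurs under the other possible reading of your ordered basis. The assertion itself is true and your strategy (exhibit a complete flag transverse to its $\sigma$-translate) is sound, but the deferred ``direct check'' is exactly where the construction breaks, and the basis must be chosen more carefully (for $GL_5$, for instance, the ordered basis $e_1+f_1,\ e_2+f_2,\ f_3,\ e_2-f_1$, completed arbitrarily, does work).

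Part 2 is a programme rather than a proof: the two steps carrying all the weight are stated as expectations. First, you need that $\mathrm{Hom}_H(St_\chi,\mathbb C)$ is computed by the top cohomology of your $\mathrm{Hom}_H$-complex; you acknowledge that the groups $\mathrm{Ext}^q_H(i_{P_S}^G(\chi),\mathbb C)$ for $q>0$ enter through a spectral sequence and propose two ways to deal with them without carrying out either. Second, the decisive combinatorial claim -- that for $GL_{2n+1}/(GL_n\times GL_{n+1})$ the resulting structure is a tree with a pendant vertex, for every unramified twist $\chi$ -- is asserted, not proved, and this is precisely where the parity of $2n+1$ must enter. By contrast, the paper settles this case at the Iwahori level: Propositions \ref{steinberg harmonic zero} and \ref{distinction harmonic general} identify $\mathrm{Hom}_H(St_\chi,\mathbb C)$ with (twisted) harmonic functions on the affine hypergraph of $H$-orbits of chambers of the Bruhat--Tits building, and Section \ref{s7} then proves two concrete facts for the inner involution of type $A_{2n}$: there is no full chamber of $l_\sigma$-length zero (a parity computation with the highest root, using that the Coxeter number of $A_{2n}$ is odd, which is exactly what distinguishes the odd from the even linear case), and every hyper edge has a unique vertex of maximal $l_\sigma$-length (each $\sigma$-stable root $\alpha_{i,j}$ is surjective on $T^\sigma$), so harmonic functions vanish by induction on $l_\sigma$. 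Your sketch contains no substitute for these two computations, so the second assertion is not established as written.
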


From now on we assume that $H$ splits over $F$, we give a sufficient 
condition for the distinction of the Steinberg representation. For this we need 
more notations.
 
 Let $\mathbf{Z}=Z(\mathbf{G})$ be the center of $\mathbf{G}$.  We write 
$\mathbf{G}/\mathbf{Z}$ as a product of adjoint simple groups 
$\mathbf{G}/\mathbf{Z}=G_1\times...\times G_m$. 

Let $T$ be a maximal split torus of $G$, let $T^0$ be the maximal compact 
subgroup of $T$ and let $W_{aff}=N_G(T)/T^0$ be the affine Weyl group. 

We use the extended Bruhat Tits building of $G$, denote it by $\B$.

Let $l$ be the length function on $W_{aff}$. Denote by $\Omega=l^{-1}(0)$ the 
elements of length 0. Let $\omega:G\rightarrow \Omega$ be the
homomorphism given by acting on the $\Omega$ colors of chambers in $\B$, see Proposition 2.15 of 
\cite{my} for a precise definition.

We show that in the extended Bruhat Tits building of $G$ there is a chamber $\C_0$ which is 
$\sigma$ stable (i.e. $\sigma(\C_0)=\C_0)$.  Let $\Omega_{H,\C_0}=\{\omega(h)| h\in 
H,h\C_0=\C_0\}$ be a subgroup of $\Omega$. We prove that $\Omega_{H,\C_0}\subset 
\Omega$ does not depend on the choice of the $\sigma$ stable chamber $\C_0$ (see 
Proposition \ref{Omega H is well defined}). We denote $\Omega_H=\Omega_{H,\C_0}$ 
for some choice of $\C_0$.

The following character was introduced in the context of the distinction of the 
Steinberg representation by Broussous in  \cite{BP14}.
 \begin{defn}
     Let $\chi_0:G\rightarrow \mathbb{C}^\times$ be the quadratic unramified 
character defined by 

$\chi_0(g)=sgn(\omega(g))$, where $sgn$ is the sign of the 
permutation defined by $\omega(g)$ on the vertices of a chamber.
 \end{defn}

 Now we can state a sufficient condition for the distinction of the Steinberg 
representation.

 \begin{theorem}\label{local field}
     Let $F$ be a non archimedean local field, assume that both $G$ and $H$ are 
split over $F$. Let $\chi:G\rightarrow\mathbb{C}^\times$ be an unramified 
character, and let $St_\chi$ be the corresponding Steinberg representation.
      The representation $St_\chi$ is $H$-distinguished if all the following conditions hold:
     \begin{enumerate}
         \item $X$ is quasi-split.
         \item Write $\mathbf{G}/\mathbf{Z}=G_1\times...\times G_m$ a product 
of simple adjoint groups. There is no $\sigma$ invariant $G_i$, such that 
$G_i\cong PGL_{2n+1}$ and $G^\sigma_i\cong P(GL_n\times GL_{n+1})$.
         \item $\chi^{-1}\chi_0$ is trivial on $\Omega_H$. 
     \end{enumerate}
 \end{theorem}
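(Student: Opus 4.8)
The plan is to reduce the statement to an explicit construction of a harmonic cochain on the extended Bruhat--Tits building $\B$ of $G$, using the identification of the distinction problem with a harmonicity question that the paper establishes before this theorem. Concretely, $\mathrm{Hom}_H(St_\chi,\mathbb{C})$ is identified with the space of functions $f$ on the oriented chambers of $\B$ that are alternating under orientation reversal, that satisfy for every panel $\mathfrak p$ the relation $\sum_{\C\supset\mathfrak p}f(\C)=0$ (a signed sum over the chambers containing $\mathfrak p$), and that transform under $H$ by the character $\psi=\chi^{-1}\chi_0$; the twist by $\chi_0$ is forced because $\Omega$ acts on orientations of the fundamental alcove through $sgn\circ\omega=\chi_0$. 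It therefore suffices to exhibit one nonzero such $f$ lying in the smooth dual, and conditions (1)--(3) are exactly what makes this possible.

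The first step is to reduce to the $\sigma$-simple case. The affine building and the harmonic-cochain space depend on $\mathbf G$ only through its adjoint quotient, and $H$ acts on $\B$ through its image in $(\mathbf G/\mathbf Z)(F)$, which lies in $(\mathbf G/\mathbf Z)^\sigma(F)$. Writing $\mathbf G/\mathbf Z=G_1\times\dots\times G_m$, both $\B$ and the cochain space factor over the $G_i$, and $\sigma$ either stabilizes a factor or transposes a pair. For a transposed pair $\{G_i,G_j\}$, the relevant subgroup is the graph of the identification $G_i\cong G_j$, and $St_{G_i}\otimes St_{G_j}$ is distinguished by its contragredient pairing with no further condition, contributing a factor of $f$ tautologically. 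One is thus reduced to a $\sigma$-stable simple adjoint factor $G_i$ with $X_i=G_i/G_i^\sigma$ quasi-split: here condition (2) says precisely that $G_i\not\cong PGL_{2n+1}$ with $G_i^\sigma\cong P(GL_n\times GL_{n+1})$, and condition (3) becomes the triviality of $\psi$ on $\Omega_{H_i}$, i.e. on the image under $\omega$ of the stabilizer in $H_i$ of a $\sigma$-stable chamber.

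For such a factor, quasi-splitness (condition (1)) provides a $\sigma$-stable chamber $\C_0$ inside a $\sigma$-stable apartment $\A_0$, on which $\sigma$ acts through the (twisted) little Weyl group of $X_i$. One then defines $f$ first on the $H_i$-orbit of $\C_0$: this is well defined precisely because the stabilizer of $\C_0$ permutes its vertices through $\Omega_{H_i}$, on which $\psi$ is trivial by (3), so the orientation sign and the character value cancel. The cochain is then extended to all of $\B$ by imposing the panel relations and propagating outward from $\A_0$; the crux is that this linear system is consistent and that its solution stays $\psi$-equivariant with the support needed to define a functional on $St_\chi$. After using $\sigma$-stability to bring any panel into $\A_0$, the consistency check is local, in a rank-one or rank-two residue: the alternating sum over the chambers through the panel cancels, unless the residue carries an ``odd'' $\sigma$-pairing pattern, which occurs exactly in the restricted type $A_{2n}$ configuration underlying $PGL_{2n+1}/P(GL_n\times GL_{n+1})$ and is excluded by (2). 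Finally $f\neq 0$ since $f(\C_0)\neq 0$, and the earlier equivalence yields the theorem.

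The step I expect to be the main obstacle is the global consistency of the extension-by-harmonicity: proving that the relations coming from all panels of $\B$ admit a common solution, with no obstruction beyond the single local parity obstruction isolated above. I anticipate this will require the structure theory of $X$ — its restricted root system and the classification of quasi-split symmetric pairs of simple groups — together with an induction on the restricted rank, removing one restricted simple root at a time and checking that the associated wall-crossing introduces no parity obstruction outside the excluded $A_{2n}$ case.
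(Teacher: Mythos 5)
Your skeleton does match the paper's: translate distinction into $\chi^{-1}\chi_0$-equivariant harmonic functions attached to chambers of $\B$ (the paper does this via the Iwahori--Hecke module $S(X)^I$, Proposition \ref{distinction harmonic general}, rather than via oriented harmonic cochains, but the resulting combinatorial conditions are the same), reduce to $\sigma$-stable simple adjoint factors, seed the function at a distinguished chamber and propagate by harmonicity, with condition (3) making the seed well defined on its $H$-orbit. However, the two decisive steps are missing. First, the seed: it is not enough that quasi-splitness ``provides a $\sigma$-stable chamber in a $\sigma$-stable apartment.'' What is needed is a chamber of $l_\sigma$-length zero whose vertex in $\Gamma^0_{aff,F}(X)$ is \emph{full}, i.e.\ every hyper edge, for every reflection in $\tilde\Delta$ including the extra affine node, has size at least two; otherwise harmonicity at a size-one hyper edge forces the value at the seed to vanish. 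The paper produces such a chamber by using that $\sigma$ is defined over $\s$ (so the hyperspecial vertex is $\sigma$-fixed), passing to the residue field and combining Prasad's criterion (Proposition \ref{springer}) with the surjectivity result (Theorem \ref{Hunnel}, where splitness of $H$ is used) to get $\Delta$-fullness, and then a separate computation at the affine node: by Proposition \ref{edge of size 1} fullness there amounts to $\sigma(u_\gamma(x))=u_\gamma(-x)$ for the highest root $\gamma$, which reduces to $\gamma(t)=-1$, a parity statement governed by the Coxeter number (plus a structure-constant argument in the outer, simply-laced case). This is exactly where condition (2) enters, and it is what your ``odd $\sigma$-pairing pattern'' gestures at without an argument; note also that it is an obstruction to nonvanishing at the seed, not to consistency of the wall-crossing relations, so your placement of condition (2) inside the ``global consistency'' step mislocates the difficulty.

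Second, the propagation itself --- which you explicitly defer as the ``main obstacle'' --- is a genuine theorem in the paper (Proposition \ref{affine support function}): one constructs $s:\Gamma^0_{aff,F}(X)\rightarrow\Z[S]$ by induction on $l_\sigma$, and the local consistency check is not a routine residue computation. It splits into cases according to whether the codimension-two face $f_\alpha\cap f_\beta$ lies on $\sigma$-stable walls, uses the geometry of $\sigma$-stable apartments and the minimal-length criteria of \cite{my}, and in the hardest case reduces to the rank-two hypergraphs classified in Section \ref{s5} together with their rational modifications. In addition, condition (3) only makes sense because $\Omega_H$ is independent of the chosen chamber of $l_\sigma$-length zero, which requires Propositions \ref{maximal H torus}--\ref{Omega H is well defined}. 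So your route is the right one, but as written the proposal supplies the framing while deferring precisely the two points that constitute the proof.
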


\begin{Remark}
\begin{enumerate}
    \item Notice that for $\chi=\chi_0$ the last condition holds automatically.
    \item In the case of $\chi=\chi_0$. The opposite direction of Theorem 
\ref{local field} also holds. If $St_{\chi_0}$ is $H$-distinguished then 
conditions 1 and 2 hold.
  
\end{enumerate}
    
    \end{Remark}

We can interpret our results in terms of the dual group. Let $G^\vee$ be the 
dual group of $G$ and let $G^\vee_X$ be the dual group of $X$ which comes with 
a map $\iota:G^\vee_X\times SL_2\rightarrow G^\vee$ (see \cite{KS17} and 
\cite{takeda}).

 Let $\mathcal{U}$ be the unipotent variety of $G^\vee$ and let $\mathcal{U}_X$ 
be the unipotent variety of $G^\vee_X$.

 Conjecturally, the $H$ distinction of a representation is related to its Langlands 
parameter factoring through $\iota$. A version of this for $L^2$ distinction
is given in Conjecture 16.5.1 of \cite{SV17}, for the 
Steinberg representation smooth and $L^2$ distinction are equivalent.
 A conjecture for smooth distinction is given in Conjecture 1.1 of \cite{takeda}. 

We prove a version of these conjectures for the Steinberg representation.
In a different work \cite{shtotland2026relativekazhdanlusztigisomorphism} we study distinction of general representations
with an Iwahori fixed vector. For general representations the story is not as simple
as the story for the Steinberg representation 
(see Conjecture 1.14 of \cite{shtotland2026relativekazhdanlusztigisomorphism}).

\begin{theorem}\label{SV conjecture}
         Let $F$ be a non archimedean local field, assume that $G$ and $H$ are 
split over $F$.
     The representation $St_{\chi_0}$ is $H$ distinguished if and only if the following equivalent 
conditions hold:
     \begin{enumerate}
         \item $\iota(\mathcal{U}_X)$ intersects the open $G^\vee$ orbit of 
$\mathcal{U}$.
         \item The Langlands parameter $\phi_{St}$ factors through $G^\vee_X$.
     \end{enumerate}
\end{theorem}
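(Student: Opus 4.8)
The plan is to deduce Theorem~\ref{SV conjecture} from Theorem~\ref{local field} and the Remark following it, thereby converting the distinction statement into a concrete assertion about the restricted root system of $X$ and the Sakellaridis--Venkatesh dual group, and then to verify that assertion by inspecting the classification of split symmetric spaces. First I would observe that for $\chi=\chi_0$ the third hypothesis of Theorem~\ref{local field} holds automatically, so by that theorem together with the converse asserted in the Remark, $St_{\chi_0}$ is $H$-distinguished if and only if (a) $X$ is quasi-split and (b) writing $\mathbf G/\mathbf Z=G_1\times\cdots\times G_m$ there is no $\sigma$-invariant factor $G_i\cong PGL_{2n+1}$ with $G_i^\sigma\cong P(GL_n\times GL_{n+1})$. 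It then suffices to prove that (a) and (b) together are equivalent to condition (1), and that (1) is equivalent to (2).

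For the equivalence of (1) and (2) I would simply unwind definitions. The Langlands parameter $\phi_{St}$ of $St_{\chi_0}$ carries the Deligne $SL_2$ onto a principal $SL_2$ of $G^\vee$ --- so its monodromy is a regular unipotent, which is exactly what generates the open orbit of $\mathcal U$ --- while $W_F$ acts through the central element of $G^\vee$ attached to the quadratic unramified character $\chi_0$, an element which lies in the image of $\iota$. Saying that $\phi_{St}$ factors through $G^\vee_X$ means that it is $G^\vee$-conjugate into $\iota(G^\vee_X\times SL_2)$; since $G^\vee_X$ and the $SL_2$-factor commute inside $G^\vee$, this forces the monodromy to be a sum of a regular nilpotent of $G^\vee_X$ (inflated to $\mathfrak g^\vee$) and the fixed nilpotent of the $SL_2$-factor, and conversely such a sum being regular produces the required factorization. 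That sum being a regular nilpotent is precisely the statement that $\iota(\mathcal U_X)$ meets the open orbit of $\mathcal U$, giving (1)$\iff$(2).

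The substance is the equivalence of (a)+(b) with (1). Since $\mathcal U=\prod\mathcal U_i$ with open orbit the product of the open orbits, $G^\vee_X=\prod (G^\vee_X)_i$, $\iota=\prod\iota_i$, and quasi-splitness of $X$ is the conjunction of quasi-splitness of the pieces, all three conditions factor over the $\sigma$-orbits of the $G_i$, so I would argue one $\sigma$-orbit at a time. If $\sigma$ interchanges two factors $G_i\leftrightarrow G_j$, the corresponding piece of $X$ is the group case $G_i$ under $G_i\times G_i$: it is quasi-split, it is not the excluded $PGL$-case, and its dual group is $G_i^\vee$ mapped by $\iota$ isomorphically onto the corresponding factor of $G^\vee$ with trivial $SL_2$-factor, so a regular unipotent is attained and both sides hold. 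If $G_i$ is $\sigma$-fixed, I would run through Cartan's classification of involutions of adjoint simple groups --- equivalently, the split symmetric spaces $G_i/G_i^\sigma$ --- and for each use the description (due to Nadler and Sakellaridis--Venkatesh) of $(G^\vee_X)_i$ as the reductive group whose root datum is dual to the restricted root system of $X_i$, together with the explicit $SL_2\to G_i^\vee$ entering $\iota_i$ (the $\mathfrak{sl}_2$-triple of the nilpotent attached to the ``$\rho_X$-shift''), in order to decide whether a regular nilpotent of $(G^\vee_X)_i$ plus the $SL_2$-nilpotent is regular in $G_i^\vee$; the outcome should be that this happens exactly when the restricted root system ``fills out'' the dual diagram, which is the combinatorial content of $X_i$ being quasi-split, with the single exception of type $A_{2n}$, i.e.\ $(G_i,G_i^\sigma)=(PGL_{2n+1},P(GL_n\times GL_{n+1}))$, where $X_i$ is quasi-split (by the proposition above) but the non-reduced restricted root system $BC_n$ --- the even/odd parity phenomenon in type $A$ --- prevents $\iota_i(\mathcal U_{X_i})$ from reaching the open orbit. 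This reproduces exactly conditions (a) and (b).

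The hard part will be this last step: identifying the precise combinatorial criterion on the restricted root system that simultaneously governs quasi-splitness and the condition ``$\iota(\mathcal U_X)$ meets the open orbit'', and then carrying out the case-by-case check over all split symmetric spaces --- in particular isolating why type $A_{2n}$ is the unique exception, which is what produces the exceptional clause (b). Some care is also needed with the $SL_2$-factor of $\iota$ (whether its nilpotent is zero, subregular, or otherwise) and with the passage to the adjoint quotient and the central twist by $\chi_0$, though the latter two do not affect anything at the level of unipotent varieties and regular orbits.
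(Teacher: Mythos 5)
Your outline is correct and its skeleton matches the paper's: reduce the distinction of $St_{\chi_0}$ to the two conditions (quasi-split, no $PGL_{2n+1}/P(GL_n\times GL_{n+1})$ factor) via Theorem \ref{local field}/Corollary \ref{main}, and then match these with the dual-group conditions. The genuine difference is in how you verify that (a)+(b) is equivalent to $\iota(\mathcal{U}_X)$ meeting the open orbit. You propose a sweep over Cartan's classification of split symmetric pairs, computing for each the restricted root system, $G^\vee_X$, and the $SL_2$-factor, and isolating the $A_{2n}$ parity exception; this would work, but it is exactly the heavy lifting you defer, and it also requires reconciling the restricted-root-system description of $G^\vee_X$ with the definition actually used (the paper follows Takeda, which it notes differs from Knop--Schalke). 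The paper instead argues simple root by simple root: once quasi-splitness forces $\Delta_0=\emptyset$ (so the $SL_2$-factor of $\iota$ is central, by Proposition \ref{iota defined}), each $\alpha\in\Delta$ has one of only three possible rank-one diagrams from Takeda's table, and the explicit root subgroups of $G^\vee_X$ in Proposition \ref{roots of G_X} produce a nilpotent in $\mathcal{N}_X$ with nonzero component along every simple root, the only obstruction being the $A_2$-diagram case, which the classification identifies with the excluded $PGL_{2n+1}$ factor (plus one direct check that a regular nilpotent of $Sp_{2n}$ is not regular in $SL_{2n+1}$). This local argument buys you the result without the full case-by-case. For the direction ``distinguished $\Rightarrow$ factors through $\iota$'', the paper pins down the torus part using the infinitesimal character of $St$ and Takeda's Theorem 8.2 (the trivial representation's parameter factors through $\iota$) before applying Jacobson--Morozov, whereas you invoke conjugacy of $SL_2$-homomorphisms with regular unipotent monodromy; that is essentially equivalent, but you should state it (Jacobson--Morozov inside the reductive subgroup $\iota(G^\vee_X\times SL_2)$ plus uniqueness of the principal $SL_2$ up to conjugacy) rather than leave it as ``produces the required factorization''.

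Two small corrections. In the swapped-factor (group) case, $G^\vee_X\cong G_i^\vee$ is embedded \emph{diagonally} into $G_i^\vee\times G_i^\vee\subset G^\vee$, not ``isomorphically onto the corresponding factor''; with your phrasing the image of a regular nilpotent would be $(n,1)$, which is not regular in the product, so the diagonal description is what makes the group case go through. Also, by Proposition \ref{parameter of steinberg} and the Remark after Theorem \ref{theorem dual}, the parameter in question is that of $St=St_1$ and is trivial on $W_F$; your description of $W_F$ acting through a central element attached to $\chi_0$ is a different convention, harmless for the unipotent-orbit argument as you note, but it should be aligned with the statement being proved.
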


\end{subsection}

\begin{subsection}{Methods of proof}

We introduce two hyper-graphs related to $X$. We relate the 
distinction problem of the Steinberg representation to problems about these 
hyper-graphs.

Let $B$ be a Borel subgroup of $G$, and let $\Delta$ be the set of simple roots 
corresponding to $B$. For any $\alpha\in\Delta$, let $P_\alpha$ be the standard 
parabolic subgroup corresponding to $\alpha$.

\begin{defn}\label{def graph borel}
    We define a hyper-graph $\Gamma_F(X)$ whose set of vertices, $B_F(X)$, is 
the set of $B$ orbits on $X$ over $F$. The hyper-edges of $\Gamma_F(X)$ are 
labeled by $\Delta$. For any $\alpha\in\Delta$ and $b\in B_F(X)$, we have the 
hyper-edge of Borel orbits $$E_{b,\alpha}=\{b'\in B_F(X)|P_\alpha b'=P_\alpha 
b\}.$$
\end{defn}

For the case of $F$ being a non archimedean local field, we define one more 
hyper-graph, denoted by $\Gamma_{aff,F}(X)$. Let $I$ be an Iwahori subgroup of 
$G$. Let $\Tilde{\Delta}$ be a set of simple reflections in the affine Weyl 
group $W_{aff}$ corresponding to $I$.

For any $\alpha\in \Tilde{\Delta}$, let $I_\alpha$ be the parahoric group 
containing $I$ and corresponding to $\alpha$.

\begin{defn}\label{def graph Iwahori}
    We define a hyper-graph $\Gamma_{aff,F}(X)$ whose set of vertices, 
$I_F(X)$, is the set of $I$ orbits on $X$. The hyper-edges of 
$\Gamma_{aff,F}(X)$ are labeled by $\Tilde{\Delta}$. For any 
$\alpha\in\Tilde{\Delta}$ and $x\in I_F(X)$ an $I$ orbit, we have the 
hyper-edge of $I$ orbits $E_{x,\alpha}=\{x'\in I_F(X)|I_\alpha x'=I_\alpha x\}$.
\end{defn}

The group $W_{aff}$ acts on the vertices of the hyper-graph $\Gamma_{aff,F}(X)$ 
(see Theorem 1.1 of \cite{my}). In particular $\Omega\subset W_{aff}$ acts on 
the vertices of $\Gamma_{aff,F}(X)$. The group $\Omega$ also acts on the 
hyper-edges of $\Gamma_{aff,F}(X)$. 

Denote by $\Gamma^0_{aff,F}(X)$ the quotient of $\Gamma_{aff,F}(X)$ by the 
action of $\Omega$. The vertices of $\Gamma^0_{aff,F}(X)$ can be identified 
with the $H$ orbits of chambers in $\B$.

\begin{defn}
    Let $\Gamma$ be a hyper-graph, a function $f:\Gamma\rightarrow\mathbb{C}$ 
is called harmonic if for every hyper-edge $E$, we have $\sum_{e\in E}f(e)=0$. 
Denote by $\mathcal{H}(\Gamma)$ the space of all harmonic functions on 
$\Gamma$.  
\end{defn}

The hyper-graphs we defined are related to the problem of determining the distinction of 
the Steinberg representation by the following proposition.

\begin{prop}
    \begin{itemize}
        \item For a finite field $F$ we have 
$dim(Hom_H(St,1))=dim\mathcal{H}(\Gamma_{F}(X))$.
        \item For a non archimedean local field $F$ we have 
$dim(Hom_H(St_{\chi_0},1))=dim\mathcal{H}(\Gamma^0_{aff,F}(X))$. 
    \end{itemize}
\end{prop}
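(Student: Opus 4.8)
The plan is to realize $\operatorname{Hom}_H(\mathrm{St},\mathbb{C})$ (resp. $\operatorname{Hom}_H(\mathrm{St}_{\chi_0},\mathbb{C})$) as a space of $H$-invariant linear functionals on a concrete model of the Steinberg representation, and then match that space with harmonic functions on the relevant hypergraph. In the finite field case, recall that $\mathrm{St}$ occurs in the top-degree cohomology (or, equivalently, in the alternating sum) of the flag complex: concretely $\mathrm{St}$ is the ``top'' constituent of the reflection representation on $C[G/B]$, and one standard model is $\mathrm{St} \cong C[G/B] \big/ \sum_{\alpha \in \Delta} C[G/P_\alpha]$, the cokernel of the sum of the pullback maps. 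Dually, $\mathrm{St}^\vee$ embeds into $C[G/B]$ as the simultaneous kernel $\bigcap_{\alpha} \ker(C[G/B] \to C[G/P_\alpha])$, where the map to $C[G/P_\alpha]$ is ``sum over the fiber''. A vector in this kernel is exactly a function $f$ on $G/B$ such that for every $\alpha$ and every coset $gP_\alpha$, $\sum_{g'B \subset gP_\alpha} f(g'B) = 0$.

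The second step is to take $H$-coinvariants. By Frobenius reciprocity / the self-duality of $\mathrm{St}$ over a finite field, $\operatorname{Hom}_H(\mathrm{St},\mathbb{C}) \cong (\mathrm{St}^\vee)^H \cong (\mathrm{St})_H^\vee$, and using the kernel model above, $(\mathrm{St}^\vee)^H$ is the space of $H$-invariant functions $f\colon G/B \to C$ satisfying the fiber-sum relations. An $H$-invariant function on $G/B$ is the same as a function on $H\backslash G/B$; since $G$ is split and we are taking $F$-points with $F$ finite, $H\backslash G/B$ is in bijection with $B\backslash G/H = B_F(X)$, the vertex set of $\Gamma_F(X)$ (one should be slightly careful that $H$-orbits and $B$-orbits index the same set and that the bijection is genuinely over $F$, but this is where the splitness of both $G$ and $H$ enters — orbits are in bijection with their geometric points). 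Under this identification the relation ``$\sum$ over the $B$-fiber of $gP_\alpha$ vanishes'' becomes exactly ``$\sum_{b' \in E_{b,\alpha}} f(b') = 0$'' in Definition \ref{def graph borel}, because the $B$-orbits inside a fixed $P_\alpha$-orbit $P_\alpha b$ are precisely the elements of the hyperedge $E_{b,\alpha}$, and the combinatorial multiplicities all collapse after passing to orbits (this is the point that requires a short argument: a priori the fiber sum is weighted, and one must check the weights are constant on each $B$-orbit appearing, or renormalize $f$ accordingly). This yields $\operatorname{Hom}_H(\mathrm{St},1) \cong \mathcal{H}(\Gamma_F(X))$, hence the equality of dimensions.

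For the non-archimedean case the strategy is parallel but with the Bruhat--Tits building replacing the flag variety: $\mathrm{St}_{\chi_0}$ (the Steinberg twisted so that the $\Omega$-action is accounted for) has the analogous model as the simultaneous kernel inside (locally constant, compactly supported, suitably $\chi_0$-equivariant) functions on chambers $G/I$ of the extended building, where the relations run over the parahorics $I_\alpha$, $\alpha \in \tilde\Delta$; concretely this is the standard realization of $\mathrm{St}$ as $H^{top}$ of the building, or as the cokernel of $\bigoplus_\alpha C_c^\infty(G/I_\alpha) \to C_c^\infty(G/I)$. Taking $H$-invariants turns functions on $G/I$ into functions on $H\backslash G/I$; by Theorem 1.1 of \cite{my} the $I$-orbits on $X$ and the relevant double cosets match the vertex set of $\Gamma_{aff,F}(X)$, and passing to the $\chi_0$-twist is exactly what produces the quotient by $\Omega$, so $H$-invariant vectors live on the vertex set of $\Gamma^0_{aff,F}(X)$; the parahoric relations become the hyperedge relations of Definition \ref{def graph Iwahori}. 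One extra point here is to verify that the $\chi_0$-twist does not create a sign obstruction in the harmonicity relations — the sign character is constant on each hyperedge because a single parahoric generator acts by a fixed sign — so the relations stay homogeneous. The compact support / convergence issues are handled because each hyperedge is finite (the $I_\alpha/I$ fibers are finite) even though the whole vertex set is infinite, so the harmonicity conditions are pointwise and no analysis is needed.

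The main obstacle I expect is the bookkeeping in the second step of each case: showing that the naive ``fiber-sum'' functional on the function-space model of $\mathrm{St}$ descends, after restricting to $H$-invariants, to the \emph{unweighted} sum $\sum_{e \in E} f(e)$ appearing in the definition of a harmonic function, rather than some weighted sum. Concretely one must (i) identify $H\backslash G/B$ (resp.\ $H\backslash G/I$) with $B_F(X)$ (resp.\ the vertices of $\Gamma^0_{aff,F}(X)$) compatibly with the field of definition — here splitness of $H$ and $G$ and the fact that the relevant parabolic/parahoric is ``large enough'' are used to kill any rational-points obstruction — and (ii) check that within a hyperedge the number of chambers/flags in each $H$-orbit over which one sums is the same, so the weights are uniform. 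Once that combinatorial normalization is pinned down, the equivalence with $\mathcal{H}(\Gamma)$ is formal. I would also keep careful track of the duality $\operatorname{Hom}_H(\pi,\mathbb{C}) \cong (\pi^\vee)^H$, which for $\mathrm{St}$ over a finite field is unproblematic (finite-dimensional, self-dual), and in the $p$-adic case uses that $\mathrm{St}_{\chi_0}$ is (essentially) self-dual up to the twist and that smooth duals commute with the operations used — a standard but worth-stating reduction.
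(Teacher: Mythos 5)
Your outline is viable, but it is a genuinely different route from the paper's, and the one step you flag as ``a short argument'' is exactly where the two routes diverge, so it deserves to be made precise. The paper never uses the kernel/cokernel presentation of the Steinberg module on $G/B$ or $G/I$: it applies Frobenius reciprocity, self-duality of $St$ (resp.\ $St_{\chi_0}$), and then passes to Hecke modules, $Hom_G(G\mathbb{C}[X]^B,St)=Hom_{\mathcal{H}(G,B)}(\mathbb{C}[X]^B,St^B)$ in the finite case and, via the Iwahori-block equivalence, $Hom_G(S(X),St_{\chi_0})=Hom_{\mathcal{H}(G,I)}(S(X)^I,St_{\chi_0}^I)$ in the $p$-adic case; there the relations $T_sf=-f$ on functionals of $\mathbb{C}[X]^B$, resp.\ $S(X)^I$, come out directly as the \emph{unweighted} hyperedge sums (the computation quoted from Section 4 of \cite{my}), and the elements $T_o$, $o\in\Omega$, act on $St_{\chi_0}^I$ by $\chi_0(o)\,sgn(o)=1$, which is precisely what produces the quotient graph $\Gamma^0_{aff,F}(X)$. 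Your route computes $H$-invariant functionals on an explicit function model instead; it is more concrete, but it pushes all the work into the combinatorial normalization.

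On that normalization: the first of your two proposed fixes is false. The weights are genuinely non-uniform within a hyperedge. For $G=PGL_2$, $H$ a split torus, $St\subset\mathbb{C}[\mathbb{P}^1]$ is the space of functions of total sum zero; its $H$-invariant vectors are given by values $a,b,c$ on the orbits $\{0\},\{\infty\},\mathbb{G}_m$ subject to $a+b+(q-1)c=0$, whereas the harmonic condition on the three vertices of $\Gamma_F(X)$ is $f_0+f_1+f_2=0$. The renormalization that works is by orbit size: a double count gives that the weight of the $B$-orbit $\mathcal{O}$ in the hyperedge coming from the $H$-orbit $\mathcal{E}$ of $P_\alpha$-cosets is $|\mathcal{O}|/|\mathcal{E}|$, so setting $f(\mathcal{O})=|\mathcal{O}|\,F(\mathcal{O})$ turns your weighted relations into the unweighted ones; in the $p$-adic case the orbits are infinite and you must instead rescale by $\mu(H\cap gIg^{-1})^{-1}$, the inverse volume of the chamber stabilizer, since the weight there is $\mu(H\cap gI_{\alpha}g^{-1})/\mu(H\cap g'Ig'^{-1})$. (This rescaling is exactly what the paper's self-duality plus Hecke-module step performs implicitly.) Two further $p$-adic points need care: the cokernel of $\bigoplus_\alpha C_c^\infty(G/I_\alpha)\rightarrow C_c^\infty(G/I)$ is not $St_{\chi_0}$ but an amalgam of the twists $St_\chi$ over characters of $\Omega$, so you must impose the extra relations $T_o=1$, $o\in\Omega$, and then use the Iwahori-block equivalence to know the resulting quotient is exactly $St_{\chi_0}$ --- this is where the quotient by $\Omega$, hence $\Gamma^0_{aff,F}(X)$, really enters, matching the paper's $T_o$ step. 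Finally, the identification $H\backslash G/B\cong B\backslash G/H$ is just $g\mapsto g^{-1}$ and needs no splitness hypothesis, so that part of your worry can be dropped.
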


Therefore, in order to show that the Steinberg representation is distinguished 
it is enough to construct a non-zero harmonic functions on these hyper-graphs. 
We provide a method to construct such functions.

First, we work over a field that is an algebraic closure of a finite field, 
we denote this field by $\Bar{F}$. We assume that $X$ is quasi-split and 
construct a non-zero harmonic function on $\Gamma_{\Bar{F}}(X)$. 

Next, we pull back the harmonic function on $\Gamma_{\Bar{F}}(X)$ to a harmonic 
function on $\Gamma_{F}(X)$. For this step we use the assumption that $H$ is 
split over $F$.

Lastly, we move to non archimedean local fields. Under conditions 1 and 2 of 
Theorem \ref{local field} we extend our construction of a harmonic function
 in the finite field case to a construction of a non-zero harmonic functions on $\Gamma^0_{aff,F}(X)$.

\end{subsection}

\begin{subsection}{Related works}
The problem of distinction of the Steinberg representation over a local field 
was studied extensively in the Galois case. By the Galois case we mean the case 
where $E/F$ is a quadratic extension, $G=\mathbf{G}(E)$, and $H=\mathbf{G}(F)$. 
The work on this case began with the paper of Prasad 
\cite{prasad2015relativelocallanglandscorrespondence}. For specific results, 
see, for example, \cite{MN17}, \cite{CM19}, \cite{BP14},\cite{CF17} and 
\cite{BP18}. All of the results mentioned in these works are in some sense 
orthogonal to ours as they deal with non split groups but only in the Galois 
case.

Interestingly, harmonic functions are also used in \cite{BP18} in order to 
analyze the distinction of the Steinberg representation, although on different 
graphs. 

We also mention the work \cite{wang2024distinctionsteinbergrepresentationrespect}
that also deals with the distinction of the Steinberg representation with 
respect to symmetric subgroups. In this work they 
prove an upper bound on the dimension of $Hom_H(St,\chi)$ and compute it for 
the case of $G$ being $GL_n$ and $H$ being the special orthogonal group. Both 
our methods and their methods are heavily based on the Bruhat Tits building. In 
fact, many of the combinatorial arguments related to the Bruhat Tits building 
used in \cite{my} also appear in 
\cite{wang2024distinctionsteinbergrepresentationrespect}. However, the main 
results of our work and the work 
\cite{wang2024distinctionsteinbergrepresentationrespect} are quite different. 
Most of our work is dedicated to proving the distinction of the Steinberg 
representation under some assumption (i.e. showing that $Hom_H(St,\chi)\neq 
0$), while most of the results of 
\cite{wang2024distinctionsteinbergrepresentationrespect} are about bounding the 
dimension of $Hom_H(St,\chi)$ from above.
The dual group of $X$ does not play a role in \cite{wang2024distinctionsteinbergrepresentationrespect}.
In some aspects, their results are more general than ours as they do not 
assume that the groups $G$ and $H$ are split.

\end{subsection}

\begin{subsection}{Structure of the paper}
In Section \ref{s2}, we recall the definition and relevant properties of the 
Steinberg representation.

In Section \ref{s3}, we relate the problems about distinction of the Steinberg 
representation to problems about existence of non-zero harmonic functions on 
the hyper-graphs $\Gamma_{F}(X)$ and $\Gamma_{aff,F}(X)$.

In Section \ref{s4}, we prove that $X$ being quasi-split is a necessary 
condition for distinction of the Steinberg representation. 

In Section \ref{s5}, we determine when a non-zero harmonic function exists on 
the hyper-graph $\Gamma_{\Bar{F}}(X)$. Here, $\Bar{F}$ is an algebraic closure 
of a finite field $F$.

In Section \ref{s6}, we discuss harmonic function on $\Gamma_F(X)$ for a finite 
field $F$ and we prove Theorem \ref{finte field}.

In Section \ref{s7}, we discuss harmonic function on $\Gamma_{aff,F}(X)$ and we 
prove Theorem \ref{local field} for the case $\chi=\chi_0$.

In Section \ref{s8}, we extend the results of Section \ref{s7} to a general 
unramified character and finish the proof of Theorem \ref{local field}.

In Section \ref{s9}, we relate our results to the dual group $G^\vee_X$ and 
prove Theorem \ref{SV conjecture}.

\end{subsection}

\begin{subsection}{Acknowledgement}
I would like to thank Raphaël Beuzart-Plessis for his suggestion of applying the 
methods of \cite{my} to the problem of the distinction of the Steinberg 
representation. I would like to thank my advisor Eitan Sayag for helpful 
discussions, and Yiannis Sakellaridis for his comments on an earlier version
of this manuscript. I was partially supported by ISF grant number 1781/23 during the work 
on this paper.

\end{subsection}

\end{section}

\begin{section}{The Steinberg representation}\label{s2}

The Steinberg representation is defined both for a group over a finite field 
and for a group over a non archimedean local field.

\begin{subsection}{The Steinberg representation over a finite field}

\begin{defn}[Subsection 1.5 of \cite{Humphreys}]
    Let $F$ be a finite field and let $\B_{sph}$ be the spherical building of 
$G$. Let $r_s$ be the semi-simple rank of $G$. The Steinberg representation of 
$G$ (denoted by $St$) can be defined using the $r-1$ homology group of 
$\B_{sph}$ with the action of $G$ on it, i.e. $St=H_{r-1}(\B_{sph},\Q)$.
\end{defn}

\begin{prop}[See \cite{Humphreys}]
    $St$ is irreducible.
\end{prop}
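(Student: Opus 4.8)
The plan is to prove that the Steinberg representation $St = H_{r-1}(\B_{sph},\Q)$ of a finite reductive group $G$ is irreducible. I would recall first that the spherical building $\B_{sph}$ of $G$ is a Tits building of rank $r$ (the semisimple rank), hence by the Solomon--Tits theorem it has the homotopy type of a wedge of $(r-1)$-spheres; in particular its only nonvanishing reduced homology is in degree $r-1$, and $\widetilde H_{r-1}(\B_{sph},\Q)$ is a free $\Q$-module whose rank equals the number of spheres in the wedge. So the question is genuinely about the $G$-module structure of a single (top) homology group, and we may work with the chain complex of the building or, equivalently, with an explicit realization of $St$.

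The main step is to exhibit the irreducibility directly. The approach I would take is the classical one via the description of $St$ as a submodule (or quotient) of $\mathbb{C}[G/B]$, the space of functions on the flag variety: concretely, $St$ is the intersection of the kernels of all the "collapsing" maps $\mathbb{C}[G/B] \to \mathbb{C}[G/P_\alpha]$ as $\alpha$ ranges over the simple roots, equivalently the alternating sum of the parabolic induction pieces. Equivalently, one uses the idempotent $e_{St} = \frac{1}{|U|}\sum_{u\in U} u \cdot \frac{1}{|W|}\sum_{w\in W}(-1)^{\ell(w)} w$ (suitably interpreted in the Hecke algebra $\mathcal{H}(G,B) = \mathbb{C}[B\backslash G/B]$), which is a primitive idempotent. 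To show $St$ is irreducible it suffices to show $\mathrm{End}_G(St)$ is one-dimensional; since $St$ sits inside $\mathrm{Ind}_B^G \mathbf{1}$, whose endomorphism algebra is the Iwahori--Hecke algebra $\mathcal{H}(G,B)$ (with basis indexed by $W$ and the standard quadratic relations with parameter $q = |F|$), this reduces to showing that the corresponding central idempotent in $\mathcal{H}$ is primitive, i.e. that the sign character of $W$ deforms to a one-dimensional representation of $\mathcal{H}$ and picks out exactly one constituent of $\mathrm{Ind}_B^G\mathbf{1}$ with multiplicity one. This is the content of the computation $e_{St}\,\mathcal{H}\,e_{St} = \mathbb{C}$, which follows from the explicit multiplication in $\mathcal{H}$.

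Alternatively, and perhaps more cleanly, I would invoke the combinatorics of the building more directly: a $G$-invariant subspace $W \subseteq H_{r-1}(\B_{sph},\Q)$ is stable under the action of $B$, and one analyzes the $B$-fixed vectors or, better, uses that the top homology is generated as a $G$-module by a single apartment class $[\mathcal{A}] = \sum_{C} \pm [C]$ summed over chambers of a fixed apartment $\mathcal{A}$ with alternating signs according to the length function on $W$. The key algebraic fact is that this apartment class generates $St$ under $G$ (using that $G$ acts transitively on apartments, or on pairs of opposite chambers) and that the $U$-coinvariants, or the pairing with $B$-relative invariants, show any nonzero $G$-submodule must contain $[\mathcal{A}]$. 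I expect the main obstacle to be the last point: proving that every nonzero $G$-submodule of the top homology contains the apartment fundamental class — this requires either the Hecke-algebra endomorphism computation above or a careful induction on the building's structure using the fact that the stabilizer of a chamber is $B$ and $\dim (St)^U = 1$ (Gelfand--Graev / the fact that the Steinberg has a unique, up to scalar, Whittaker model, equivalently a one-dimensional $U$-fixed space for a suitable twist). I would therefore structure the proof around: (i) Solomon--Tits to pin down the homology degree and freeness; (ii) realize $St$ inside $\mathrm{Ind}_B^G\mathbf{1}$ and identify $\mathrm{End}_G(St)$ with a corner of the Iwahori--Hecke algebra; (iii) compute that corner to be $\mathbb{C}$, concluding irreducibility by Schur's lemma. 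Since the excerpt only asks to cite \cite{Humphreys}, I would in practice keep the proof short, recalling these steps at the level of references and giving the Hecke-algebra endomorphism computation as the one genuinely load-bearing lemma.
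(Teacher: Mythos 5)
The paper offers no proof of this proposition—it is imported verbatim from Humphreys—and your sketch reconstructs exactly the standard argument of that reference: Solomon--Tits to identify the top homology of the spherical building, realization of $St$ inside $\mathbb{C}[G/B]$ as the intersection of the kernels of the maps to the $\mathbb{C}[G/P_\alpha]$, and the Iwahori--Hecke algebra computation that the sign character occurs with multiplicity one (equivalently $e\,\mathcal{H}(G,B)\,e=\mathbb{C}$ for the sign idempotent $e$), so your outline is correct and in line with the cited source. The only blemish is the normalization of the group-algebra idempotent you write down (the factor $\tfrac{1}{|U|}\tfrac{1}{|W|}$ is not the right one), but since the load-bearing step is the corner computation for the Hecke-algebra sign idempotent, which you state correctly, this does not affect the argument.
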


\begin{prop}[Subsection 1.4 of \cite{Humphreys}]
    Let $F$ be a finite field, let $B$ be a Borel subgroup of $G$ and let 
$H(G,B)$ be the Hecke algebra of $G$. 
This is the algebra of $B$ bi-invariant functions on $G$.
 The space of fixed points, $St^B$, 
is one dimensional and $H(G,B)$ acts on $St^B$ by $sgn$. That is, for every 
simple reflection $s\in\Delta$, the element $T_s=1_{BsB}$ acts on $St^B$ by 
$-1$. 
\end{prop}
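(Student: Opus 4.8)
\bigskip

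The plan is to promote the definition of $St$ as the top homology of the spherical building to an Euler–characteristic identity in the Grothendieck group of $G$, apply the exact functor ``$B$-invariants'', and read off the resulting $\mathcal{H}(G,B)$-module. Write $q$ for the order of the residue field, so that $\mathcal{H}(G,B)$ is the Iwahori–Hecke algebra $\mathcal{H}_q(W)$ with basis $\{T_w\}_{w\in W}$; since $q$ is a prime power it is not a root of unity, so $\mathcal{H}(G,B)$ is split semisimple and, by the Tits deformation argument, abstractly isomorphic to $\mathbb{C}[W]$, with the index character $T_s\mapsto q$ and the sign character $T_s\mapsto -1$ corresponding to the trivial and sign characters of $W$. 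Moreover $\mathbb{C}[G/B]=\mathrm{Ind}_B^G\mathbf{1}$ is semisimple, $\mathcal{H}(G,B)=\mathrm{End}_G(\mathbb{C}[G/B])$, and $\mathbb{C}[G/B]^B=\mathbb{C}[B\backslash G/B]$ is its regular representation; by the standard $(G,\mathcal{H})$-bimodule decomposition of $\mathbb{C}[G/B]$, the space $St^B$ is a simple $\mathcal{H}(G,B)$-module (in particular one-dimensional), and because the quadratic relation $(T_s-q)(T_s+1)=0$ holds, each $T_s$ acts on it with eigenvalue $q$ or $-1$. So it is enough to identify $St^B$ with the sign character.

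First I would invoke the Solomon–Tits theorem: $\widetilde{H}_i(\mathcal{B}_{\mathrm{sph}},\Q)=0$ for $i\neq r-1$, where $r=|\Delta|$ is the semisimple rank; hence the augmented simplicial chain complex of $\mathcal{B}_{\mathrm{sph}}$ is a $G$-equivariant resolution of $St=\widetilde{H}_{r-1}(\mathcal{B}_{\mathrm{sph}},\Q)$, with $d$-chains $C_d=\bigoplus_{J\subseteq\Delta,\ |J|=r-1-d}\mathbb{C}[G/P_J]$. Taking Euler characteristics in the Grothendieck group of $G$ gives $[St]=\sum_{J\subseteq\Delta}(-1)^{|J|}[\mathrm{Ind}_{P_J}^G\mathbf{1}]$, and applying $(-)^B$ (exact, e.g.\ via averaging over $B$) yields, in the Grothendieck group of $\mathcal{H}(G,B)$-modules, $[St^B]=\sum_{J\subseteq\Delta}(-1)^{|J|}\,[\,\mathbb{C}[B\backslash G/P_J]\,]$.

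Next I would identify the terms: $\mathbb{C}[B\backslash G/P_J]$ is the parabolic permutation module $\mathrm{Ind}_{\mathcal{H}_J}^{\mathcal{H}(G,B)}$ of the index character of the parabolic Hecke subalgebra $\mathcal{H}_J$ (classical, e.g.\ Curtis–Iwahori–Kilmoyer), which under $\mathcal{H}(G,B)\cong\mathbb{C}[W]$ corresponds to $\mathbb{C}[W/W_J]=\mathrm{Ind}_{W_J}^{W}\mathbf{1}$. Thus under this identification $[St^B]$ equals $\sum_{J\subseteq\Delta}(-1)^{|J|}[\mathrm{Ind}_{W_J}^{W}\mathbf{1}]$ in the Grothendieck group of $W$; but this last sum is the $W$-equivariant Euler characteristic of the reduced chain complex of the Coxeter complex $\Sigma(W,\Delta)$, a triangulated $(r-1)$-sphere on whose top homology $W$ acts by the sign character $\varepsilon$, so it equals $[\varepsilon]$ (equivalently, a direct inclusion–exclusion over the poset of subsets of $\Delta$). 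Hence $[St^B]=[\varepsilon]$; since $\mathcal{H}(G,B)$ is semisimple, $St^B$ is simple and $\varepsilon$ is simple, this forces $St^B\cong\varepsilon$, i.e.\ $T_s=1_{BsB}$ acts on $St^B$ by $-1$ for every $s\in\Delta$.

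Beyond invoking the two standard inputs — Solomon–Tits, and the description of $\mathbb{C}[B\backslash G/P_J]$ as a parabolic Hecke module — the only delicate point is keeping the sign bookkeeping in the two Euler-characteristic computations consistent; that is where I would be most careful. One can also bypass the passage through the representation ring of $W$ altogether: if some $T_s$ acted by $q$ on $St^B$, then $St^B$ would lie in the copy of $\mathbb{C}[G/P_s]$ inside $\mathbb{C}[G/B]$ (the $q$-eigenspace of $T_s$), forcing $\mathrm{Hom}_G(St,\mathrm{Ind}_{P_s}^G\mathbf{1})\neq 0$ and hence $St^{P_s}\neq 0$, which contradicts the vanishing $\sum_{J\subseteq\Delta}(-1)^{|J|}|W_{\{s\}}\backslash W/W_J|=0$ coming from the same Euler-characteristic formula.
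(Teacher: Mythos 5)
Your argument is correct. Note, however, that the paper does not prove this proposition at all: it is quoted as a known fact with the reference to 1.4 of \cite{Humphreys}, so there is no internal proof to compare against. What you have written is essentially the classical derivation that underlies the cited reference: Solomon--Tits gives the acyclicity of the augmented chain complex of $\B_{sph}$, whence Solomon's formula $[St]=\sum_{J\subseteq\Delta}(-1)^{|J|}[\mathrm{Ind}_{P_J}^G\mathbf{1}]$ in the Grothendieck group; exactness of $(-)^B$ in characteristic $0$ and the Curtis--Iwahori--Kilmoyer description of $\mathbb{C}[B\backslash G/P_J]$ as the parabolically induced index character, combined with the Tits deformation $\mathcal{H}(G,B)\cong\mathbb{C}[W]$, transport this to $\sum_{J}(-1)^{|J|}[\mathrm{Ind}_{W_J}^W\mathbf{1}]=[\varepsilon]$ via the Coxeter complex; and the resulting identity $[St^B]=[\varepsilon]$, together with semisimplicity of $\mathcal{H}(G,B)$, forces $St^B\cong\varepsilon$, i.e.\ $T_s$ acts by $-1$. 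Your sign bookkeeping is consistent (the $(-1)^{r-1}$ factors from the top-degree concentration cancel on both sides), and the check that each simplex stabilizer $P_J$ fixes its simplex pointwise, so that $C_d\cong\bigoplus_{|J|=r-1-d}\mathbb{C}[G/P_J]$, is the only orientation issue and it is harmless. One small overstatement: the parenthetical claim that simplicity of $St^B$ as an $\mathcal{H}(G,B)$-module makes it ``in particular one-dimensional'' does not follow from the double-centralizer argument alone; but it is also not needed, since the identity $[St^B]=[\varepsilon]$ already gives $\dim St^B=1$ and pins down the module up to isomorphism. Your closing eigenspace argument ($q$-eigenspace of $T_s$ on $M^B$ equals $M^{P_s}$, and $\dim St^{P_s}=\sum_J(-1)^{|J|}\lvert W_{\{s\}}\backslash W/W_J\rvert=0$) is a valid shortcut resting on the same Euler-characteristic input.
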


\end{subsection}
\begin{subsection}{The Steinberg representation over a non archimedean local 
field}
    
\end{subsection}

\begin{defn}
    Let $F$ be a non archimedean local field and let $\B_{aff}$ be the affine 
building of $G$. 
    The Steinberg representation, $St$, of $G$ is defined to be the top compact 
cohomology group of $\B_{aff}$ with the action of $G$ on it, 
$St=H^{top}_c(\B_{aff})$
\end{defn}

\begin{defn}
    Let $\chi:G\rightarrow\mathbb{C}^\times$ be an unramified character. We 
define $St_\chi=St\otimes \chi$.
\end{defn}

\begin{prop}[See \cite{BP18}]
    The representation $St$ is irreducible.
\end{prop}

Let $W_{aff}$ be the affine Weyl group of $G$ and let $\Omega$ be the subgroup 
of $W_{aff}$ of elements of length $0$. The group $\Omega$ can be embedded in the 
permutation group of the vertices of any chamber $\C\in \B$.  There is a 
homomorphism $\omega:G\rightarrow \Omega$ which can be defined by comparing the 
order of the vertices of the alcoves $g\C$ and $\C$. See Proposition 2.15 of 
\cite{my} for a precise definition.

\begin{prop}\label{Steinberg gives sgn}[See Subsection 3.3 of \cite{BP18}]
    Let $I$ be an Iwahori subgroup of $G$ and let $H(G,I)$ be the affine 
Hecke algebra of $G$. This is the algebra of $I$ bi-invariant compactly supported
 functions on $G$. The module $St^I$ is one dimensional and $H(G,I)$ acts on $St^I$ by 
$sgn$. That is, for every simple reflection $s\in\Tilde{\Delta}$, the element 
$T_s=1_{IsI}$ acts on $St^I$ by $-1$. For $o\in \Omega$, the element 
$T_o=1_{IoI}$ acts on $St^I$ by $sgn(o)$, the sign of $o$ as a permutation on 
the vertices of $\C$.
\end{prop}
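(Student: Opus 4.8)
The plan is to use the definition $St = H^{\mathrm{top}}_c(\B_{aff})$ and to compute directly how the Iwahori--Matsumoto generators of $\mathcal{H}(G,I)$ act on the class of the fundamental alcove. Put $d=\dim\B_{aff}$. Since $\B_{aff}$ is a locally finite CW complex with no cells of dimension $>d$, the group $H^d_c(\B_{aff})$ is the cokernel of the coboundary $\delta\colon C^{d-1}_c(\B_{aff})\to C^d_c(\B_{aff})$ on finitely supported cochains, so $St = C^d_c(\B_{aff})/\delta\bigl(C^{d-1}_c(\B_{aff})\bigr)$. Fix the fundamental alcove $\C$ (the chamber whose pointwise stabilizer is $I$), choose an orientation $\mathfrak o$ of $\C$, and let $v_0\in St$ be the class of the cochain equal to $1$ on $(\C,\mathfrak o)$ and $0$ on every other chamber; since $I$ fixes $\C$ pointwise this cochain is $I$-invariant, so $v_0\in St^I$. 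The proof then breaks into three claims: (i) $v_0\neq 0$; (ii) $T_s v_0 = -v_0$ for every $s\in\Tilde{\Delta}$; (iii) $T_o v_0 = sgn(o)\,v_0$ for every $o\in\Omega$. Granting these, all generators of $\mathcal{H}(G,I)$ act on $v_0$ by scalars, so $\mathbb{C} v_0$ is a one-dimensional submodule of $St^I$; since $St$ is irreducible and, by (i), $St^I\neq 0$, the theorem of Borel and Casselman gives that $St^I$ is a simple $\mathcal{H}(G,I)$-module, whence $St^I=\mathbb{C} v_0$ and the proposition follows.

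Claim (i) is easy: if $v_0=0$ then the cochain of $(\C,\mathfrak o)$ is a coboundary, and translating by $g\in G$ with $g\C$ an arbitrary chamber shows every chamber-cochain is a coboundary, so $\delta\bigl(C^{d-1}_c\bigr)=C^d_c$ and $St=0$, contradicting irreducibility.

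For claim (ii) the input is geometric: $G$ is split with residue field $\mathbb{F}_q$, so every panel of $\B_{aff}$ lies in exactly $q+1$ chambers; moreover the parahoric $I_s\supset I$ is the pointwise stabilizer of the panel $P_s$ of $\C$ of type $s$, it acts transitively on the $q+1$ chambers through $P_s$, and $I$ is the stabilizer of $\C$ inside it. Orient $P_s$ by the orientation that $(\C,\mathfrak o)$ induces on it. Then the coboundary of the cochain supported on $P_s$ is the sum, over the chambers $\C'\supset P_s$, of the cochains of those $\C'$ taken with the orientation inducing the chosen one on $P_s$; being a coboundary, this gives $\sum_{\C'\supset P_s}[\text{class of }\C']=0$ in $St$, the term $\C'=\C$ being exactly $v_0$. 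Writing $IsI=\bigsqcup_{j=1}^{q}g_jI$, the chambers $g_j\C$ are precisely the $q$ chambers through $P_s$ other than $\C$; since each $g_j$ fixes $P_s$ pointwise it carries $(\C,\mathfrak o)$ to $g_j\C$ with exactly the orientation inducing the chosen one on $P_s$, so $T_s v_0=\sum_{j=1}^{q}(g_j\cdot v_0)=\sum_{\C'\supset P_s,\ \C'\neq\C}[\text{class of }\C']=-v_0$. I expect the \textbf{main obstacle} to be exactly this orientation bookkeeping: one must match the orientations occurring in the coboundary of the panel-cochain with those produced by the $I_s$-translates, and it is the pointwise fixing of $P_s$ by $I_s$ that pins the eigenvalue to $-1$ rather than to $q$ (the value on the trivial representation).

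For claim (iii), since $\ell(o)=0$ a lift $\tilde{o}\in N_G(T)$ normalizes $I$, so $IoI=\tilde{o}I$ has the same measure as $I$ and $T_o$ acts on $St^I$ simply as the geometric action of $\tilde{o}$. Again because $\ell(o)=0$, $\tilde{o}$ stabilizes $\C$ and acts on it by the vertex-permutation $o$; a permutation of the vertices of $\C$ changes its orientation by the sign of the permutation, so $\tilde{o}\cdot v_0 = sgn(o)\,v_0$ and hence $T_o v_0 = sgn(o)\,v_0$. This computation also checks, after the fact, that the assignment $T_s\mapsto -1$, $T_o\mapsto sgn(o)$ is consistent with the braid relations and with the relations governing the action of $\Omega$ in $\mathcal{H}(G,I)$.
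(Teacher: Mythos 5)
Your proposal is correct, and it is worth noting that the paper itself gives no argument for this statement: it is quoted with a reference to subsection 3.3 of \cite{BP18}, where the Steinberg representation is likewise handled through (harmonic) cochains on the building. Your computation from the definition $St=H^{top}_c(\B_{aff})$ is essentially the classical proof, and the three claims are established by the right means: the identification $H^{d}_c=C^d_c/\delta C^{d-1}_c$, the facts that $I_s=I\sqcup IsI$ fixes the panel $P_s$ pointwise and acts transitively on the $q+1$ chambers through it with $I$ transitive on the $q$ chambers other than $\C$, the observation that an element fixing $P_s$ pointwise carries the oriented $(\C,\mathfrak o)$ to the unique orientation of its image with the same incidence sign on $P_s$ (this is exactly the bookkeeping that pins the eigenvalue at $-1$ rather than $q$), and the Borel--Casselman simplicity of $St^I$ to conclude one-dimensionality. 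Two small refinements: first, the parahoric $I_s$ is the \emph{connected} pointwise stabilizer of $P_s$, but the property you actually use (distinct cosets $g_jI$ give distinct chambers, and all $q$ chambers other than $\C$ occur) follows from the transitivity-plus-counting you already invoke, so nothing is lost; second, in claim (iii) the identity ``permuting the vertices of $\C$ changes the orientation by the sign of the permutation'' is not a general fact about polysimplicial chambers (for a reducible root system the alcove is a product of simplices, and e.g.\ a quarter-turn of a square is an orientation-preserving cell automorphism inducing an odd vertex permutation); it does hold here because every $g\in G$ preserves the decomposition of $\B_{aff}$ into the buildings of the simple factors, so the vertex permutation is a product of per-factor permutations and the orientation effect agrees with the sign factor by factor. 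With these caveats made explicit, the argument is complete and matches the source the paper relies on.
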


It follows from the last proposition that $St$ is in the principal block of $G$ 
representations generated by their $I$ fixed vectors (see \cite{Borel1976}).

\end{section}

\begin{section}{Distinction of the Steinberg representations and harmonic 
functions}\label{s3}
    In this Section we relate the questions about the distinction of the 
Steinberg representation to questions about the existence of a non-zero 
harmonic function on the hyper-graphs $\Gamma_F(X)$ and $\Gamma_{aff,F}(X)$ 
defined in Definitions \ref{def graph borel} and \ref{def graph Iwahori} 
respectively.

    We begin with the case of a finite field.
    
\begin{prop}\label{harmonic finite}
        \item Let $F$ be a finite field, we have 
$dim(Hom_H(St,1))=dim\mathcal{H}(\Gamma_{F}(X))$.        
\end{prop}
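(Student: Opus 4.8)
The plan is to pass, via Frobenius reciprocity, from the distinction problem to a computation inside the Iwahori--Hecke algebra $\mathcal{H}(G,B)$, and then to recognize the action of the generators $T_{s_\alpha}$ as the incidence operators of the hypergraph $\Gamma_F(X)$. Since $G$ is finite, Frobenius reciprocity gives $\mathrm{Hom}_H(St,1)\cong\mathrm{Hom}_G(St,\mathrm{Ind}_H^G 1)=\mathrm{Hom}_G(St,\mathbb{C}[X])$, and by complete reducibility this dimension equals the multiplicity $[\mathbb{C}[X]:St]=\dim\mathrm{Hom}_G(\mathbb{C}[X],St)$. By the proposition of \cite{Humphreys} recalled in Section \ref{s2}, $St$ is the unique irreducible constituent of $\mathbb{C}[G/B]$ whose (one-dimensional) space of $B$-fixed vectors affords the sign character of $\mathcal{H}(G,B)$, i.e.\ on which every $T_{s_\alpha}$ acts by $-1$. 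Hence, by the standard correspondence between irreducible $G$-modules with nonzero $B$-fixed vectors and irreducible $\mathcal{H}(G,B)$-modules, $[\mathbb{C}[X]:St]$ equals the dimension of the sign-isotypic subspace $\{f\in\mathbb{C}[X]_B : T_{s_\alpha}f=-f\ \text{for all }\alpha\in\Delta\}$ of the $B$-coinvariants $\mathbb{C}[X]_B$. (It is convenient to use the coinvariants, with their natural basis, rather than the $B$-invariants, so as to obtain $\mathcal{H}(\Gamma_F(X))$ literally and not merely up to a change of basis.) Finally, $\mathbb{C}[X]_B$ has basis the set of $B$-orbits on $X$, which --- by Lang's theorem, each $F$-rational $B$-orbit being a single $B(F)$-orbit --- is precisely the vertex set $B_F(X)$ of $\Gamma_F(X)$.

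Next I would identify the Hecke operators with the hypergraph incidences. Writing $P_\alpha=B\sqcup Bs_\alpha B$, we have $1+T_{s_\alpha}=1_{P_\alpha}$ in $\mathcal{H}(G,B)$ and $1_{P_\alpha}\cdot\delta_{eB}=\sum_{gB\subset P_\alpha/B}\delta_{gB}$. Transporting this through $\mathbb{C}[X]_B\cong\mathbb{C}[G/B]\otimes_{\mathbb{C}[G]}\mathbb{C}[X]$, one finds, for the basis vector $[b_0]$ of a $B$-orbit $b_0$ with representative $x_0$,
$$(1+T_{s_\alpha})[b_0]=\sum_{gB\subset P_\alpha/B}[\,B\,g^{-1}x_0\,]=\sum_{b'\in E_{b_0,\alpha}}m_{b'}\,[b'],$$
where $m_{b'}$ is the number of cosets $gB\subset P_\alpha/B$ with $g^{-1}x_0\in b'$; these are positive integers summing to $|P_\alpha/B|$, and the $b'$ occurring are exactly the $B$-orbits inside the $P_\alpha$-orbit of $x_0$, i.e.\ the members of the hyperedge $E_{b_0,\alpha}$. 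Therefore, for $f=\sum_b c_b[b]$ one gets $(1+T_{s_\alpha})f=\sum_{b'}m_{b'}\big(\sum_{b\in E_{b',\alpha}}c_b\big)[b']$, and since the $m_{b'}$ are nonzero this vanishes if and only if $\sum_{b\in E_{b',\alpha}}c_b=0$ for every $\alpha$-hyperedge. Intersecting over all $\alpha\in\Delta$, the sign-isotypic subspace of $\mathbb{C}[X]_B$ is exactly $\mathcal{H}(\Gamma_F(X))$, and combining with the previous paragraph yields $\dim\mathrm{Hom}_H(St,1)=\dim\mathcal{H}(\Gamma_F(X))$.

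The reduction step is formal (finite-group semisimplicity plus the quoted description of $St^B$). The content, and the step I expect to be the main obstacle, is the computation of $T_{s_\alpha}$ on the coinvariant module in the orbit basis: one must keep careful track of left versus right actions and of the multiplicities $m_{b'}$ with which a $B$-orbit sits over its $P_\alpha$-orbit. The structural fact underpinning the identification with $E_{b,\alpha}$ --- that the $B$-orbits of the symmetric space $X$ interact with the rank-one parabolics $P_\alpha$ in the expected combinatorial way --- is provided by the description of $B$-orbits on $X$ from \cite{my}; this, rather than the linear algebra, is where care is required.
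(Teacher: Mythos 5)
Your proposal is correct and follows essentially the same route as the paper: Frobenius reciprocity, reduction to the sign character of the Iwahori--Hecke algebra $\mathcal{H}(G,B)$, and the identification of $1+T_{s_\alpha}$ with the hyperedge sums over $E_{b,\alpha}$ (your coinvariant model and the explicit multiplicity bookkeeping just make precise what the paper asserts directly in the $B$-invariants model). One small caveat: the parenthetical appeal to Lang's theorem is both unnecessary and false in general --- a geometric $B$-orbit defined over $F$ may split into several $B(F)$-orbits when the stabilizer is disconnected (e.g.\ the open orbit for $SL_2/\mathbb{G}_m$, cf.\ Remark \ref{inequality}) --- but nothing is lost, since the vertex set of $\Gamma_F(X)$ is used throughout the paper, including in its own proof of this proposition, simply as the set of $B$-orbits on the finite set $X=G/H$.
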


\begin{proof}
    By Frobenius reciprocity we have 
$Hom_H(St,1)=Hom_G(St,ind^G_H(1))=Hom_G(St,\mathbb{C}[X])$. Here, 
$\mathbb{C}[X]$ is the $G$ representation of functions on $X$. 

    Let $B$ be a Borel subgroup of $G$. The representation $St$ is irreducible and has a $B$ fixed 
vector. We can write $\mathbb{C}[X]$ as a direct sum of its sub-representation 
generated by its $B$ fixed vectors and some complement, 
$\mathbb{C}[X]=G\mathbb{C}[X]^B\oplus \pi$. Any map from $St$ to 
$\mathbb{C}[X]$ lands in $G\mathbb{C}[X]^B$, so 
$Hom_G(St,\mathbb{C}[X])=Hom_G(St,G\mathbb{C}[X]^B)$.

    Both the Steinberg representation and $G\mathbb{C}[X]^B$ are self dual 
representations of $G$, therefore 
$Hom_G(St,G\mathbb{C}[X]^B)=Hom_G(G\mathbb{C}[X]^B,St)$.

    We consider $\mathbb{C}[X]^B$ and $St^B$ as modules of the Hecke algebra 
$H(G,B)$ of $B$ bi-invariant functions on $G$. We have 
$Hom_G(G\mathbb{C}[X]^B,St)=Hom_{H(G,B)}(\mathbb{C}[X]^B,St^B)$.

    We claim that $Hom_{ 
H(G,B)}(\mathbb{C}[X]^B,St^B)=\mathcal{H}(\Gamma_F(X))$. 
    
    The module $St^B$ is one dimensional, in fact it is the sign representation 
of $H(G,B)$. Any element of $Hom_{H(G,B)}(\mathbb{C}[X]^B,St^B)$ is a 
function on the $B$ orbits on $X$.

    Let $f\in Hom_{H(G,B)}(\mathbb{C}[X]^B,St^B)$, for every 
$s_\alpha\in \Delta$, a simple reflection, the element 

$T_\alpha=1_{Bs_\alpha B}\in H(G,B)$ acts as $-1$ on $f$, i.e. $T_\alpha f=-f$.
 Thus, for every Borel orbit 
$b\in B\backslash X$ we have $((T_s+1)f)(b)=0$. This is exactly the condition 
$\sum_{b'\in E_{b,\alpha}}f(b')=0$. Therefore $f$ is a harmonic function on 
$\Gamma_F(X)$. The same argument also works in the opposite direction.
\end{proof}

Now we move to the local field case. The proof is almost the same.

\begin{prop}\label{steinberg harmonic zero}
Let $F$ be a non archimedean local field, we have  

$dim(Hom_H(St_{\chi_0},1))=dim\mathcal{H}(\Gamma^0_{aff,F}(X))$.    
\end{prop}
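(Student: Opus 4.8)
The plan is to mimic the proof of Proposition \ref{harmonic finite} step for step, replacing the semisimplicity of the representation category over a finite field — there used to split off from $\mathbb{C}[X]$ the subrepresentation generated by its $B$-fixed vectors — by the Bernstein block decomposition, and the functor $M\mapsto M^B$ by the equivalence between the Iwahori block and the category of $\mathcal{H}(G,I)$-modules. First, Frobenius reciprocity for smooth induction gives $Hom_H(St_{\chi_0},1)=Hom_G(St_{\chi_0},C^\infty(X))$, where $C^\infty(X)=\mathrm{Ind}_H^G 1$ is the smooth representation of functions on $X$. Next, $St_{\chi_0}$ lies in the principal (Iwahori) block $\mathfrak{s}_0$: an Iwahori subgroup fixes the vertices of the chamber it stabilizes, so $\omega$, hence $\chi_0$, is trivial on $I$, whence $St_{\chi_0}^I=St^I\neq 0$, and an irreducible representation with a nonzero Iwahori-fixed vector lies in $\mathfrak{s}_0$. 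Thus every $G$-map out of $St_{\chi_0}$ has image in the component $C^\infty(X)_{\mathfrak{s}_0}$, and applying the functor $V\mapsto V^I$ — an equivalence from $\mathrm{Rep}(G)_{\mathfrak{s}_0}$ onto the category of all $\mathcal{H}(G,I)$-modules, non-admissible ones included — yields
\[
Hom_G(St_{\chi_0},C^\infty(X))=Hom_{\mathcal{H}(G,I)}\big(St_{\chi_0}^I,\,\mathbb{C}[I_F(X)]\big),
\]
where $\mathbb{C}[I_F(X)]=C^\infty(X)^I$ is the space of $I$-invariant functions on $X$, i.e.\ functions on the vertex set of $\Gamma_{aff,F}(X)$; here I may replace $(C^\infty(X)_{\mathfrak{s}_0})^I$ by the full module $\mathbb{C}[I_F(X)]$ since any $\mathcal{H}(G,I)$-map out of the one-dimensional $\mathfrak{s}_0$-module $St_{\chi_0}^I$ automatically lands in its $\mathfrak{s}_0$-part.

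It then remains to unwind the right-hand side, as in Proposition \ref{harmonic finite}. The module $St_{\chi_0}^I$ is one-dimensional, with $T_s$ acting by $-1$ for $s\in\Tilde{\Delta}$ (since $\chi_0$ is trivial on the parahoric $I_s\supset IsI$) and $T_\omega$ acting by $sgn(\omega)\,\chi_0(\omega)=sgn(\omega)^2=1$ for $\omega\in\Omega$; hence a map $St_{\chi_0}^I\to\mathbb{C}[I_F(X)]$ is exactly a function $f\in\mathbb{C}[I_F(X)]$ with $(T_s+1)f=0$ for all $s\in\Tilde{\Delta}$ and $T_\omega f=f$ for all $\omega\in\Omega$. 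Evaluating $(T_s+1)f$ — which is the action of $1_{I_s}$, the characteristic function of the parahoric $I_s$ — at an $I$-orbit $x$ produces, up to a positive scalar, $\sum_{x'\in E_{x,s}}f(x')$, so the first family of equations is precisely harmonicity of $f$ on $\Gamma_{aff,F}(X)$; and $T_\omega$ acts on $\mathbb{C}[I_F(X)]$ by the permutation of $I$-orbits induced by $\omega\in\Omega$, so the second family says that $f$ is $\Omega$-invariant, i.e.\ descends to the quotient $\Gamma^0_{aff,F}(X)$. Since $\Omega$ permutes the hyper edges, an $\Omega$-invariant function is harmonic on $\Gamma_{aff,F}(X)$ if and only if it is harmonic on $\Gamma^0_{aff,F}(X)$; this identifies $Hom_H(St_{\chi_0},1)$ with $\mathcal{H}(\Gamma^0_{aff,F}(X))$ and gives the claimed equality of dimensions.

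I expect the delicate points, both routine, to be: (i) the computation of the $T_\omega$-action — verifying that the twist by $\chi_0$ exactly cancels the sign $sgn(\omega)$ on $St_{\chi_0}^I$, and that $T_\omega$ acts on $\mathbb{C}[I_F(X)]$ through the $\Omega$-action on vertices (this is precisely why the statement concerns $St_{\chi_0}$ and the quotient graph $\Gamma^0_{aff,F}(X)$ rather than $St$ and $\Gamma_{aff,F}(X)$); and (ii) checking that the hyper-edge structure of $\Gamma^0_{aff,F}(X)$ is the one induced from $\Gamma_{aff,F}(X)$, so that $\Omega$-invariant harmonicity upstairs matches harmonicity downstairs, even when $\Omega$ does not act freely on the edges. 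The one nonelementary input, the equivalence $V\mapsto V^I$ for the whole principal block, is standard type theory and needs only to be quoted.
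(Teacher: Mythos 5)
There is a genuine gap at the final unwinding step. Your reduction to $Hom_{\mathcal{H}(G,I)}(St_{\chi_0}^I,\,C^\infty(X)^I)$ via the Iwahori block equivalence is fine, but the claim that $((T_s+1)f)(x)$ equals, up to a positive scalar, the unweighted sum $\sum_{x'\in E_{x,s}}f(x')$ is false. For $f\in C^\infty(X)^I$ and $x$ in the $I$-orbit $b$ one has $((T_s+1)f)(x)=\sum_{gI\subset I_s}f(g^{-1}x)$, and grouping the $[I_s:I]=q+1$ cosets according to the $I$-orbit of $g^{-1}x$ gives $\sum_{b'\in E_{b,s}}w_{b'}\,f(b')$, where $w_{b'}$ is the number of cosets landing in $b'$; these weights are not all equal in general. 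Already in the group case the edge $\{IwI,IswI\}$ with $sw>w$ yields the condition $f(IwI)+q\,f(IswI)=0$, and in a rank-one example such as $PGL_2/T$ the three orbits in an edge enter with weights $1,1,q-1$. So what your argument actually identifies is a \emph{weighted} harmonic space, not $\mathcal{H}(\Gamma^0_{aff,F}(X))$ as defined in the paper (unweighted edge sums); that the two spaces have the same dimension is true but requires an argument you have not supplied.

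This is exactly why the paper inserts a duality step before passing to Hecke modules: using self-duality of $St_{\chi_0}$ and the fact that $C^\infty(X)$ is the smooth dual of $S(X)$ (both $G$ and $H$ being unimodular), it replaces $Hom_G(St_{\chi_0},C^\infty(X))$ by $Hom_G(S(X),St_{\chi_0})$ and then computes $Hom_{\mathcal{H}(G,I)}(S(X)^I,St_{\chi_0}^I)$. On the compactly supported side the relevant computation is clean: $1_{I_s}\ast\mathbf{1}_b$ is left $I_s$-invariant, hence $(T_s+1)\mathbf{1}_b=c\sum_{b'\in E_{b,s}}\mathbf{1}_{b'}$ with $c>0$, and applying the functional gives precisely the unweighted edge condition defining harmonicity. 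Your other two flagged points (the computation showing $T_o$ acts trivially on $St_{\chi_0}^I$, and the passage between $\Omega$-invariant harmonic functions on $\Gamma_{aff,F}(X)$ and harmonic functions on $\Gamma^0_{aff,F}(X)$) are handled correctly and agree with the paper; the missing ingredient is the dualization (or some substitute, such as an explicit isomorphism between the weighted and unweighted harmonic spaces), without which the identification with $\mathcal{H}(\Gamma^0_{aff,F}(X))$ does not follow.
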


\begin{proof}
    By Frobenius reciprocity we have 
$$Hom_H(St_{\chi_0},1)=Hom_G(St_{\chi_0},Ind^G_H(1))=Hom_G(St_{\chi_0},C^\infty(
X))$$
    
    Here, $C^\infty(X)$ is the $G$ representation of locally constant functions on 
$X$. 

    The representation $St_{\chi_0}$ is self dual, therefore 
$Hom_G(St_{\chi_0},C^\infty(X))=Hom_G(S(X),St_{\chi_0})$. Here, $S(X)$ is the 
$G$ representation of locally constant compactly supported functions on $X$.

    Let $I$ be an Iwahori subgroup of $G$. $St_{\chi_0}$ is irreducible and is 
generated by its $I$ fixed vectors. The subgroup $I$ splits the category of 
smooth representations of $G$ (see \cite{Bcenter}), and the category of 
representations generated by their $I$ fixed vectors is equivalent to the 
category of $\mathcal{H}(G,I)$ modules (see \cite{Borel1976},\cite{Matsumoto1977AnalyseHD}). 

Therefore $Hom_G(S(X),St_{\chi_0})=Hom_{H(G,I)}(S(X)^I,St_{\chi_0}^I)$.

    We claim that $Hom_{H(G,I)}(S(X)^I,St_{\chi_0}^I)=\mathcal{H}(\Gamma^0_{aff,F}(X))$. 
    The $H(G,I)$ module $St_{\chi_0}^I$ is one dimensional, in fact it is the sign 
representation of $H(G,I)$. 
Any element of $Hom_{H(G,I)}(S(X)^I,St_{\chi_0}^I)$ is a function on the $I$ orbits on $X$.
    
    Let $f\in Hom_{H(G,I)}(S(X)^I,St_{\chi_0}^I)$. For any simple 
reflection $s\in \Tilde{\Delta}$, the element 

$T_s=1_{IsI}\in H(G,I)$ acts as 
$-1$ on $f$, $T_sf=-f$. Thus, for every $I$ orbit $b\in I\backslash X$ we have 
$((T_s+1)f)(b)=0$, this is exactly the condition $\sum_{b'\in E_{b,s}}f(b')=0$ 
(see Section 4 of \cite{my}). Now we just need to show that $f$ is constant on 
$\Omega$ orbits to get that it is a harmonic function on $\Gamma^0_{aff,F}(X)$.

    We use the action of the elements $T_o\in H(G,I)$ for $o\in \Omega$. 
    
    By Proposition \ref{Steinberg gives sgn} we have $(T_of)(b)=\chi^2_0(o)f(b)=f(b)$.
     We also have $(T_of)(b)=f(o^{-1}\times b)$ (see Proposition 4.1 in \cite{my}). 
    Therefore, $f$ is constant on $\Omega$ orbits.

    The same argument also works in the opposite direction.
    
\end{proof}

In Proposition \ref{steinberg harmonic zero} we related the question about the distinction of 
$St_{\chi_0}$ to a question about the existence of non-zero harmonic functions. 
Next, we want to relate the question about the distinction of $St_\chi$ for a 
general unramified character $\chi:G\rightarrow\mathbb{C}^\times$ to a question 
about the existence of non-zero harmonic functions.

The vertices of $\Gamma_{aff,F}(X)$ can be identified with the set of $H$ 
orbits on $\Omega$ colored chambers in the extended Bruhat Tits building of 
$G$, which we denote by $\B$ (see Section 3 of \cite{my}). The vertices of 
$\Gamma^{0}_{aff,F}(X)$ can be identified with the set of $H$ orbits on the 
chambers in $\B$. 

We introduce one more hyper-graph. Let $\Gamma_{aff,F}^1(X)$ be the sub 
hyper-graph of $\Gamma_{aff,F}(X)$ whose vertices correspond to $H$ orbits of 
chambers colored by the unit $1\in\Omega$. The vertices of  
$\Gamma_{aff,F}^1(X)$ can be identified with $H\cap Ker(\omega)$ orbits on the 
chambers of $\B$. Denote $H^1=H\cap Ker(\omega)$.

\begin{prop}\label{distinction harmonic general}
    $St_\chi$ is $H$ distinguished if and only if there is a non-zero 
$\phi\in\mathcal{H}(\Gamma^1_{aff,F}(X))$ such that for any chamber $\C$ and 
$h\in H$, $\phi(H^1h\C)=\chi^{-1}\chi_0(\omega(h^{-1}))\phi(H^1\C)$. 
\end{prop}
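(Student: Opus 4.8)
The plan is to follow exactly the template established in the proof of Proposition \ref{steinberg harmonic zero}, but keeping track of the twist by $\chi$ instead of specializing to $\chi_0$. First I would apply Frobenius reciprocity and self-duality of $St_\chi$ to get
\[
Hom_H(St_\chi,1)=Hom_G(St_\chi,C^\infty(X))=Hom_G(S(X),St_\chi).
\]
Since $\chi$ is unramified, $St_\chi$ still has nonzero $I$-fixed vectors, lies in the principal block, and $St_\chi^I$ is one-dimensional; so the category equivalence with $\mathcal{H}(G,I)$-modules gives $Hom_G(S(X),St_\chi)=Hom_{\mathcal{H}(G,I)}(S(X)^I,St_\chi^I)$. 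The action of $\mathcal{H}(G,I)$ on $St_\chi^I$ is the twist of the $St$-action by $\chi$: for $s\in\widetilde\Delta$ the generator $T_s$ still acts by $-1$ (since $\chi$ is unramified, $\chi(s)=1$ for the standard representatives), while for $o\in\Omega$ the element $T_o$ acts by $\chi(o)\cdot sgn(o)=\chi(o)\chi_0(o)^{-1}$ rather than by $sgn(o)^2=1$. Hence an element of $Hom_{\mathcal{H}(G,I)}(S(X)^I,St_\chi^I)$ is a function $f$ on $I$-orbits on $X$ — equivalently, via the identification of Section 3 of \cite{my}, a function on $H$-orbits of $\Omega$-colored chambers — satisfying the harmonicity relations $\sum_{b'\in E_{b,s}}f(b')=0$ from the $T_s$-relations, together with the $\Omega$-equivariance $f(o^{-1}b)=\chi(o)\chi_0(o)^{-1}f(b)$ coming from the $T_o$-relations (using proposition 4.1 of \cite{my} to identify the $T_o$-action with the translation action).

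Next I would descend from $\Omega$-colored chambers to the graph $\Gamma^1_{aff,F}(X)$, whose vertices are $H^1$-orbits of chambers colored by $1\in\Omega$, where $H^1=H\cap Ker(\omega)$. The point is that a function on $H$-orbits of $\Omega$-colored chambers satisfying the above $\Omega$-equivariance is determined by its restriction to the color-$1$ chambers, and that restriction is exactly a function $\phi$ on $H^1$-orbits of chambers (one checks that two color-$1$ chambers in the same $H$-orbit differ by an element of $H^1$ times an $\Omega$-translation, and the $\Omega$-equivariance is consistent on these). Conversely, given $\phi$ on $\Gamma^1_{aff,F}(X)$ one reconstructs $f$ on all colors by the equivariance formula; this is well-defined precisely when $\phi$ satisfies the compatibility $\phi(H^1 h\C)=\phi(H^1\C)\chi^{-1}\chi_0(\omega(h^{-1}))$ for $h\in H$, $\C$ a chamber — this is the condition that reconstructing $f$ from $\phi$ does not depend on which color-$1$ representative we chose in an $H$-orbit, i.e.\ that $f$ is genuinely $H$-invariant. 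Under this correspondence the harmonicity of $f$ on $\Gamma_{aff,F}(X)$ translates to harmonicity of $\phi$ on $\Gamma^1_{aff,F}(X)$ (the edges of $\Gamma^1_{aff,F}$ are the color-preserving pieces of the edges of $\Gamma_{aff,F}$, or their $H^1$-images), and $f\neq 0$ iff $\phi\neq 0$. Assembling these bijections gives the claimed equivalence.

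The main obstacle I expect is the bookkeeping in the descent step: carefully checking that the color-$1$ restriction is a bijection between $\{f \text{ as above}\}$ and $\{\phi\in\mathcal{H}(\Gamma^1_{aff,F}(X)) : \phi(H^1h\C)=\phi(H^1\C)\chi^{-1}\chi_0(\omega(h^{-1}))\}$, and in particular that the harmonic edge-relations for $f$ at chambers of every color are all accounted for by the harmonic relations for $\phi$ together with the equivariance — no relation is lost and none is imposed twice. This requires knowing how $\Omega$ permutes the colors of the edges $E_{b,s}$ (so that an edge based at a color-$o$ chamber is the $\Omega$-translate of an edge based at a color-$1$ chamber) and that $\omega|_H$ surjects onto $\Omega_H$ so the consistency condition is exactly the stated one; these facts are available from \cite{my} and from proposition \ref{Omega H is well defined}. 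Everything else is a formal transcription of the argument already carried out for $\chi=\chi_0$ in proposition \ref{steinberg harmonic zero}.
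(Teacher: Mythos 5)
Your overall strategy is the same as the paper's: rerun the Frobenius reciprocity/Hecke algebra argument of proposition \ref{steinberg harmonic zero} keeping track of the twist, describe the resulting Hom space as harmonic functions on $\Gamma_{aff,F}(X)$ subject to an $\Omega$-equivariance condition coming from the elements $T_o$, and then pass between such functions and functions on $\Gamma^1_{aff,F}(X)$ satisfying the stated $H$-compatibility by restricting to color-$1$ chambers and extending via the equivariance formula. This is exactly how the paper's proof is organized, and your flagged bookkeeping point (each edge of $\Gamma_{aff,F}(X)$ at color $o$ is an $\Omega$-translate of one at color $1$, so no harmonicity relation is lost or duplicated) is the same check the paper performs in its two directions.

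The one genuine flaw is the appeal to ``self-duality of $St_\chi$''. For a general unramified character $\chi$ the representation $St_\chi$ is not self-dual: its smooth dual is $St_{\chi^{-1}}$, and the paper accordingly writes $Hom_H(St_\chi,1)=Hom_G(S(X),St_{\chi^{-1}})$, not $Hom_G(S(X),St_{\chi})$. This is not cosmetic, because the character by which $T_o$ acts on the line of $I$-fixed vectors is precisely what produces the factor $\chi^{-1}\chi_0$ in the statement; using $St_\chi$ in place of $St_{\chi^{-1}}$ gives the eigenvalue $\chi(o)\,sgn(o)$, i.e.\ the equivariance character $\chi\chi_0$ rather than $\chi^{-1}\chi_0$. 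Since $\Omega$ (and hence the relevant values of $\chi$ on it) need not be of order two for a reductive $G$ with a split central torus, $\chi\chi_0$ and $\chi^{-1}\chi_0$ are genuinely different conditions, so as written your Hecke computation does not deliver the compatibility formula you assert at the end of the descent step (it would, only if one silently inverts the convention for the $\Omega$-action on colored chambers). Replacing the dual by $St_{\chi^{-1}}$ fixes this, and the rest of your argument then coincides with the paper's proof.
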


\begin{proof}
    Like in the proof of Proposition \ref{steinberg harmonic zero}, we have 
$Hom_H(St_\chi,1)=Hom_{H(G,I)}(S(X)^I,St_{\chi^{-1}}^I)$ which can be described as a subspace 
of the space of harmonic functions $\phi$ on $\Gamma_{aff,F}(X)$. The extra 
condition on $\phi$ to belong to this subspace is the following. Let 
$o\in\Omega$ and $\C\subset \B$ be a chamber. Denote by $x_{\C,o}$ the $I$ 
orbit on $X$ corresponding to the chamber $\C$ colored by the color $o$. The 
condition is $\phi(x_{\C,o})=\chi_0(o)\chi^{-1}(o)\phi(x_{\C,1})$. We 
refer to this condition as condition $(*)$.

    Now we show that this space of functions is the same as the space of functions

    $\phi\in\mathcal{H}(\Gamma^1_{aff,F}(X))$ such that for any chamber $\C$ and 
$h\in H$, $\phi(H^1h\C)=\chi^{-1}\chi_0(\omega(h^{-1}))\phi(H^1\C)$. 
    
    For the first direction, assume that there exists  a non-zero harmonic 
function $\phi\in \mathcal{H}(\Gamma_{aff.F})$ satisfying condition $(*)$. 
Restricting $\phi$ to $\Gamma^1_{aff,F}$ gives us a harmonic function on 
$\Gamma^1_{aff,F}(X)$.

    Clearly, if $h\in H$ and $\C$ a chamber then 
$$\phi(H^1h\C)=\phi(x_{h\C,1})=\phi(x_{\C,\omega(h^{-1})})=\chi_0\chi^{-1}(\omega(h^{-1}))\phi(x_{\C,1})=\chi_0\chi^{-1}(\omega(h^{-1}))\phi(H^1\C)$$

    For the second direction, let $\phi\in \Gamma^1_{aff,F}(X)$ be such that for 
    any chamber $\C$ and $h\in H$, $\phi(H^1h\C)=\chi^{-1}\chi_0(\omega(h^{-1}))\phi(H^1\C)$.
     We define a function $\phi'$ on $\Gamma_{aff,F}(X)$ by assigning to a $H$ orbit of a 
chamber $\C$ with the color $o$ the value 
$\phi'(\C,o)=\phi(H^1\C)\chi^{-1}\chi_0(o)$. We only need to check that this 
function is $H$ invariant. Let $h\in H$, $o\in\Omega$ and $\C$ a chamber,  
$\phi'(h\C,\omega(h)o)=\chi^{-1}\chi_0(\omega(h)o)\phi(H^1h\C)=\chi^{
-1}\chi_0(o)\phi(H^1\C)=\phi'(\C,o)$. 
\end{proof}

\end{section}

\begin{section}{$X$ being quasi-split is a necessary condition for distinction 
}\label{s4}

In this Section we prove that $X$ being quasi-split is a necessary 
condition for distinction in the case of a non archimedean local field. With slight 
modifications, our proof can be applied also to the case of a finite field.

\begin{Remark}
    The main result of this Section follows from known results. It is known that $St$ is 
generic, i.e. has a Whittaker model (see for example \cite{BBBG24}). By 
\cite{prasad2019generic} a generic representation can be distinguished only if 
$X$ is quasi-split. 

    We provide a direct proof.
\end{Remark}

Let $F$ be a non archimedean local field and let $\B$ be the extended Bruhat Tits building of 
$G$.

The involution $\sigma$ acts on $\B$ and that an apartment, $\A$, of $\B$ is 
called $\sigma$ stable if $\sigma(\A)=\A$ (see Subsection 2.5 of \cite{my}).

We have the notion of the $l_\sigma$ length of a chamber (see Definition 5.1 of \cite{my}). 
It is defined as $l_\sigma(\C)=d(\C,\sigma(\C))$, 
where $d$ is the distance between chambers in the building.

We introduce a partial order on the $\sigma$ stable apartments. 

\begin{defn}
    Let $\A$ be a $\sigma$ stable apartment, and let $T\subset G$ be a maximal 
split torus corresponding to $\A$. Define $rank(\A)=rank(T)-rank(T\cap H)$.
\end{defn}

\begin{defn}\label{def order}
    We define a partial order $<$ to be the minimal partial order such that for 
any two $\sigma$ stable apartments $\A_1,\A_2$ with $rank(\A_1)<rank(\A_2)$ and 
$\A_1\cap \A_2$ of codimension 1 inside $\A_1$ (equivalently $\A_2$) we have 
$\A_1<\A_2$. 

    This partial order descends to a partial order on the $H$ orbits of 
$\sigma$ stable apartments.
 \end{defn}

\begin{prop}\label{apartment order}
    Let $\A_1,\A_2$ be two $\sigma$ stable apartments such that $H\A_1\neq 
H\A_2$ and such that their intersection is of codimension 1 inside them. Then 
$|rank(\A_1)-rank(\A_2)|\leq 1$. Furthermore, $\A_1<\A_2$ if and only if the 
action of $\sigma$ on $\A_1$ preserves the two half planes defined by $\A_1\cap 
\A_2\subset \A_1$.
\end{prop}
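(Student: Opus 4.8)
The plan is to analyze the local geometry of the configuration near the wall $\A_1 \cap \A_2$ inside the building $\B$, using the fact that $\sigma$ is a building automorphism stabilizing both apartments. First I would set up notation: let $M = \A_1 \cap \A_2$, a $\sigma$-stable affine subspace of codimension $1$, and let $\cvec{v}$ be a point of $M$ in general position (not on any wall of $\A_1$ other than ones containing $M$). Choosing $\cvec{v}$ $\sigma$-fixed if possible — or at least passing to the $\sigma$-action on the local structure at $\cvec{v}$ — we reduce to the link $\mathrm{Lk}(\cvec{v})$, which is a spherical building on which $\sigma$ acts, and in which $\A_1, \A_2$ give two apartments meeting along a common half-apartment (the image of $M$). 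The tori $T_1, T_2$ attached to $\A_1, \A_2$ can be chosen to agree on the subtorus $S$ corresponding to $M$, so that $T_1$ and $T_2$ differ by one ``extra'' cocharacter direction each; this is where the bound $|\mathrm{rank}(\A_1) - \mathrm{rank}(\A_2)| \le 1$ should come from.

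For the rank inequality, the key point is that $\A_1$ and $\A_2$ share the codimension-$1$ subspace $M$, hence the split tori $T_1$ and $T_2$ share a codimension-$1$ split subtorus $S$. Then $\mathrm{rank}(\A_i) = \mathrm{rank}(S) + 1 - \mathrm{rank}(S \cap H) - \epsilon_i$ where $\epsilon_i \in \{0,1\}$ records whether the extra direction of $T_i$ lies in $H$ (equivalently, whether the corresponding cocharacter is $\sigma$-fixed). Indeed $\sigma$ stabilizes $\A_i$, so it acts on the one-dimensional quotient $T_i / S$; that action is either trivial ($\epsilon_i = 1$, the wall-direction cocharacter is in $H$, contributing to $\mathrm{rank}(T_i \cap H)$) or it is inversion/nontrivial ($\epsilon_i = 0$). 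Subtracting, $\mathrm{rank}(\A_1) - \mathrm{rank}(\A_2) = \epsilon_2 - \epsilon_1 \in \{-1,0,1\}$, and since $H\A_1 \ne H\A_2$ forces $\mathrm{rank}(\A_1) \ne \mathrm{rank}(\A_2)$ in the relevant cases (or at least the order relation is only defined when they differ), we get $|\mathrm{rank}(\A_1) - \mathrm{rank}(\A_2)| = 1$ in the situation of Definition \ref{def order}.

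For the second assertion, I would unwind what ``the action of $\sigma$ on $\A_1$ preserves the two half-planes defined by $\A_1 \cap \A_2$'' means in terms of $\epsilon_1$: $\sigma$ fixes $M = \A_1 \cap \A_2$ pointwise-as-a-set and either swaps the two half-spaces of $\A_1$ bounded by $M$ or preserves each. Preserving each half-space is exactly the condition that the transverse cocharacter direction of $T_1$ is $\sigma$-fixed, i.e. $\epsilon_1 = 1$, i.e. $\mathrm{rank}(\A_1)$ is the \emph{smaller} of the two ranks; so $\A_1 < \A_2$. Conversely if $\sigma$ swaps the half-spaces then $\epsilon_1 = 0$, $\epsilon_2 = 1$, and $\mathrm{rank}(\A_2) < \mathrm{rank}(\A_1)$, so $\A_2 < \A_1$. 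One then checks this is consistent with the minimality clause in Definition \ref{def order}: the relation $<$ is \emph{defined} by declaring $\A_1 < \A_2$ precisely when $\mathrm{rank}(\A_1) < \mathrm{rank}(\A_2)$ and the intersection is codimension $1$, so the content here is the equivalence between the rank comparison and the half-plane-preservation property.

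The main obstacle I anticipate is the reduction to the link and the careful bookkeeping of the tori: one must ensure that $T_1$ and $T_2$ can genuinely be chosen to share the codimension-$1$ split torus $S$ attached to $M$ (this uses that $M$ is a face common to chambers of both apartments and standard building theory, but the $\sigma$-equivariance of these choices needs attention), and one must correctly identify the $\sigma$-action on the rank-one quotient $T_i/S$ with the action of $\sigma$ on the transverse direction in $\A_i$. Handling the possibility that $\sigma$ does not fix a point of $M$ — so that one works with the ``folded'' action on directions or passes to a $\sigma$-fixed point guaranteed by a fixed-point theorem for the bounded action of $\sigma$ — is the technical heart; everything else is linear algebra on cocharacter lattices with the involution $\sigma$.
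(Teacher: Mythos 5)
Your bookkeeping for the first assertion is essentially the paper's argument in different clothing: the paper computes $rank(\A_i)$ as the codimension of the group of translations of $\A_i$ commuting with $\sigma$ and compares with the translations of $\A_1\cap\A_2$, which is the same $\epsilon_i\in\{0,1\}$ dichotomy you set up via the tori (and it sidesteps the technical point you flag, since one never needs $T_1$ and $T_2$ to share a codimension-one subtorus, only the common translation directions along $\A_1\cap\A_2$). Up to that, the inequality $|rank(\A_1)-rank(\A_2)|\le 1$ is fine.

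The genuine gap is in the ``furthermore'' equivalence. From ``$\sigma$ preserves the two half-spaces of $\A_1$'' you get $\epsilon_1=1$, but you then assert without argument that $rank(\A_1)$ is the \emph{smaller} rank, i.e.\ that $\epsilon_2=0$; nothing in your linear-algebra setup rules out $\epsilon_1=\epsilon_2=1$, i.e.\ both apartments having $\sigma$ preserve the half-spaces, in which case the ranks are equal and $\A_1<\A_2$ fails. (Your parenthetical claim that $H\A_1\neq H\A_2$ forces the ranks to differ is likewise unjustified, and is false when both $\epsilon_i=0$; similarly ``if $\sigma$ swaps the half-spaces then $\epsilon_2=1$'' does not follow, though for the reverse implication of the iff it suffices that $\epsilon_1=0$ gives $rank(\A_1)\ge rank(\A_2)$, so that direction survives.) This is exactly where the hypothesis $H\A_1\neq H\A_2$ must do real work, and it is the heart of the paper's proof: a chamber $\C\subset\A_i$ containing a panel spanning $\A_1\cap\A_2$ has $\sigma(\C)$ on the same side of $\A_1\cap\A_2$ if and only if $\C$ has minimal $l_\sigma$ among the chambers containing that panel (Remark 7.9 of \cite{my}), and there is at most one $H$-orbit of $\sigma$-stable apartments containing such a chamber (Section 7 of \cite{my}); since $\A_1,\A_2$ lie in different $H$-orbits, at most one of them can have the half-space-preserving property, which is what yields $\epsilon_2=0$ and hence $\A_1<\A_2$. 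Without this building-theoretic input (or a substitute for it), your proposed proof of the forward implication does not go through.
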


\begin{proof}
    Let $\A$ be a $\sigma$ stable apartment, $rank(\A)$ is equal to the 
co-dimension of the subgroup of translations on the affine space $\A$ that 
commute with the action of $\sigma$ on $\A$ (see Proposition 5.7 in \cite{my}). 

    Now, let $\A_1,\A_2$ be two $\sigma$ stable apartments whose intersection 
is of codimension 1. Denote by $d$ the dimension of the group of translation on 
$\A_1\cap \A_2$ that commute with $\sigma$. 
    
    Denote $n=dim\A_1=dim\A_2$. Then, for $i\in \{1,2\}$, $rank(\A_i)$ is 
either $n-d$ or $n-d-1$. If the translations on $\A_i$ that are in a direction perpendicular to 
$\A_1\cap \A_2$ commute with $\sigma$ then $rank(\A_i)=n-d-1$, otherwise 
$rank(\A_i)=n-d$.

    Let $\C$ be a chamber in $\A_i$ that contains a facet of codimension 1 that 
spans $\A_1\cap \A_2$ (as an affine hyper plane). The translations perpendicular 
to $\A_1\cap \A_2$ commute with $\sigma$ if and only if $\sigma(\C)$ and $\C$ 
are on the same side of $\A_1\cap \A_2$.

By Remark 7.10 of \cite{my}, this 
occurs if and only if $\C$ has minimal $l_\sigma$ length among the chambers 
that contain the facet $f=\C\cap\A_1\cap\A_2$. 
    
    There is at most one $H$ orbit of $\sigma$ stable apartments 
that contains a chamber $\C$ that satisfies the above property (see Section 7 of 
\cite{my}). This finishes the proof as $\sigma(\C)$ and $\C$ 
are on the same side of $\A_1\cap \A_2$ if and only if $\sigma$ 
preserves the two half planes defined by $\A_1\cap \A_2$.
    
\end{proof}

 \begin{defn}
     We say that $X$ is quasi-split if there is a Borel subgroup, $B$, of $G$ 
such that $B\cap\sigma(B)$ is a maximal torus.
 \end{defn}

 We use an equivalent condition that we now state.

 \begin{defn}
     A torus $T\subset G$ is called $\sigma$ split if 
 it is $\sigma$ stable and $\sigma$ acts on it by inverse. 
 
 For any torus $T$, we define its $\sigma$ split part, $T^-=\{t\in T|\sigma(t)=t^{-1}\}^0$, 
 the connected component of the identity.
 \end{defn}

 \begin{prop}[\cite{V74}]
     $X$ is quasi-split if and only if there is a maximal $\sigma$ split torus 
$T$ such that $Z_G(T)$ is a maximal torus of $G$.  
 \end{prop}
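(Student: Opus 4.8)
The statement to prove is the characterization of quasi-splitness via a maximal $\sigma$-split torus $T^-$ with $Z_G(T^-)$ a maximal torus, attributed to Vust \cite{V74}. Let me think about how I would prove this.

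\textbf{The approach.}
The plan is to prove both directions by passing between Borel subgroups $B$ with $B \cap \sigma(B)$ a torus and maximal $\sigma$-split tori $T^-$ whose centralizer is a maximal torus. The bridge in both directions is the observation that if $B \cap \sigma(B)$ is a torus, then this torus is itself a maximal torus of $G$ (it is solvable, connected, and has the same dimension as a maximal torus: indeed $\dim(B \cap \sigma(B)) \geq 2\dim B - \dim G = \dim T$ by the dimension formula for intersections inside $G$, and a torus inside $B$ has dimension at most $\dim T$, so equality holds and $B \cap \sigma(B)$ is a maximal torus $T$). Moreover this $T$ is automatically $\sigma$-stable since $\sigma$ swaps $B$ and $\sigma(B)$ hence fixes their intersection.

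\textbf{Direction 1: quasi-split $\Rightarrow$ existence of $T^-$.}
Starting from such a $B$ with $T = B \cap \sigma(B)$ a $\sigma$-stable maximal torus, I would decompose $T$ under the involution $\sigma$ into its $+1$ and $-1$ parts (up to isogeny): $T^+ = (T^\sigma)^\circ$ and $T^- = $ the connected component of $\{t : \sigma(t) = t^{-1}\}$. Then $T^-$ is a $\sigma$-split torus, and I would show it is a \emph{maximal} $\sigma$-split torus with $Z_G(T^-) = T$. The key point is that $B$ and $\sigma(B)$ are two Borels containing $T$; they correspond to a choice of positive system $\Phi^+$ and its image $\sigma(\Phi^+)$. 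Since $B \cap \sigma(B) = T$, no root space is shared, i.e. $\sigma(\Phi^+) = \Phi^-$, which means $\sigma$ acts on the root system as $-w_0'$ for the relevant twist — more precisely, $\sigma$ acting on $X^*(T)$ composed with $-1$ is a diagram automorphism, so $\sigma = -\theta$ on the root lattice for a diagram automorphism $\theta$. From this one reads off that $T^-$ has dimension equal to the rank minus the number of $\theta$-orbits needing correction, and that no root of $G$ vanishes on $T^-$ (because $\sigma(\alpha) = -\theta(\alpha) \neq \alpha$ forces $\alpha|_{T^-} \neq 0$): hence $Z_G(T^-) = T$, a maximal torus. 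Maximality of $T^-$ as a $\sigma$-split torus follows since any larger $\sigma$-split torus would be contained in a maximal torus, forcing that maximal torus to have a larger anti-invariant part, contradicting that $T$ already realizes the maximal possible anti-invariant dimension once we know $B \cap \sigma(B) = T$ — or more cleanly, maximal $\sigma$-split tori are all $H$-conjugate (Vust/Richardson), so we can compare dimensions.

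\textbf{Direction 2: existence of $T^-$ $\Rightarrow$ quasi-split.}
Conversely, suppose $T^-$ is a maximal $\sigma$-split torus with $T := Z_G(T^-)$ a maximal torus. Then $T$ is $\sigma$-stable (the centralizer of a $\sigma$-stable torus is $\sigma$-stable). Consider the roots of $T$ in $G$: since $Z_G(T^-) = T$ contains no root subgroups, every root $\alpha$ is non-trivial on $T^-$, i.e. $\alpha|_{T^-} \neq 0$. Now I would choose a generic one-parameter subgroup $\lambda \in X_*(T^-)$ on which no root vanishes, and let $B = B(\lambda)$ be the associated Borel (the subgroup generated by $T$ and the root subgroups $U_\alpha$ with $\langle \alpha, \lambda \rangle > 0$). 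Because $\sigma$ acts on $T^-$ by inversion, $\sigma$ sends $\lambda$ to $-\lambda$, hence $\sigma(B(\lambda)) = B(-\lambda) = B^-$, the opposite Borel with respect to $T$. Therefore $B \cap \sigma(B) = B(\lambda) \cap B(-\lambda) = T$, a torus, and $X$ is quasi-split.

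\textbf{Main obstacle.}
The subtle point — the one I would expect to spend the most care on — is Direction 1, specifically proving that $Z_G(T^-)$ is \emph{exactly} $T$ and not something bigger, i.e. that no root of $G$ with respect to $T$ restricts trivially to $T^-$. This requires genuinely using the hypothesis $B \cap \sigma(B) = T$ to pin down how $\sigma$ acts on the root datum (that $\sigma$ negates a positive system, hence $-\sigma$ is a diagram automorphism fixing a base), and then a root-system argument that a root fixed by $-\sigma$ composed with... — one must be careful that $\sigma$ may genuinely fix some roots as a permutation of the root system even while negating the positive system only when such roots are "outside" the relevant span; sorting out exactly which tori are $\sigma$-split and matching dimensions is where the real content lies. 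The reverse direction, by contrast, is essentially formal once one has the dynamic (Hilbert–Mumford) description of Borel subgroups via one-parameter subgroups. A clean alternative for Direction 1 is to invoke the structure theory of symmetric spaces directly (Helminck–Wang): maximal $\sigma$-split tori are $H$-conjugate, and $X$ is quasi-split iff the restricted root system $\Phi(T^-)$ equals (a form of) the full root system, which is equivalent to $Z_G(T^-)$ being a torus; but since the paper cites \cite{V74}, presumably the intended proof is the root-theoretic one sketched above.
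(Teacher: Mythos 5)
The paper offers no proof of this proposition at all: it is imported as a known result of Vust \cite{V74}, so there is no internal argument to compare against. Your self-contained root-theoretic proof is essentially correct. Direction 2 is clean: $T=Z_G(T^-)$ is $\sigma$-stable, no root vanishes on $T^-$, a generic $\lambda\in X_*(T^-)$ is regular, and $\sigma\cdot\lambda=-\lambda$ gives $\sigma(B(\lambda))=B(-\lambda)$, so $B(\lambda)\cap\sigma(B(\lambda))=T$. Direction 1 also works as you set it up: $B\cap\sigma(B)$ contains a maximal torus (any two Borels do), so it equals a $\sigma$-stable maximal torus $T$; then $B\cap\sigma(B)=T$ forces $\sigma(\Phi^+)=\Phi^-$, so no root is fixed by $\sigma$ (a fixed root would be simultaneously positive and negative), and since a root restricts trivially to $T^-$ exactly when it lies in the $+1$ eigenspace of $\sigma$ on $X^*(T)\otimes\mathbb{Q}$, no root vanishes on $T^-$ and $Z_G(T^-)=T$; this also dissolves the worry you raise in your ``main obstacle'' paragraph. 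The one spot where your write-up is genuinely weaker than it should be is the maximality of $T^-$: the dimension-maximality claim for $T$ is asserted rather than proved, and the appeal to $H$-conjugacy of maximal $\sigma$-split tori plus ``comparing dimensions'' is not a complete argument as stated. Fortunately the gap closes in one line from what you have already established: any $\sigma$-split torus containing $T^-$ centralizes it, hence lies in $Z_G(T^-)=T$, hence inside $\{t\in T\mid\sigma(t)=t^{-1}\}^{\circ}=T^-$. With that substitution your proof is complete and is presumably the intended argument behind the citation.
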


By Proposition \ref{distinction harmonic general} in order to prove that $X$ being 
quasi-split is a necessary condition for the distinction of $St_{\chi}$, it 
is enough to prove the next result.

\begin{theorem}\label{easy direction}
    If $X$ is not quasi-split then $\mathcal{H}(\Gamma^1_{aff,F}(X))=0$.
\end{theorem}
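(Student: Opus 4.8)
The plan is to show that if $\mathcal{H}(\Gamma^0_{aff,F}(X)) \neq 0$ then $X$ is quasi-split, by contraposition. A nonzero harmonic function $\phi$ on $\Gamma^0_{aff,F}(X)$ is, by the dictionary of Section \ref{s3}, a nonzero function on $H$-orbits of chambers in $\B$ satisfying $\sum_{e \in E} \phi(e) = 0$ for every hyper edge; equivalently, lifting to a function on chambers of $\B$, for every codimension-1 face $f$ of a chamber, the sum of $\phi$ over the chambers containing $f$ vanishes. I would first reduce the harmonicity condition to a statement about $\sigma$-stable apartments: every chamber of $\B$ lies in some $\sigma$-stable apartment (this is standard for symmetric spaces over the building, and is used implicitly in \cite{my}), so $\phi$ is determined by its values on chambers inside $\sigma$-stable apartments. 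The idea is then to run an averaging/support argument down the partial order $<$ on $H$-orbits of $\sigma$-stable apartments introduced in Definition \ref{def order}.

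The key step is the following. Suppose $\phi \neq 0$; choose a chamber $\C$ with $\phi(H\C) \neq 0$ lying in a $\sigma$-stable apartment $\A$ whose $H$-orbit is \emph{minimal} with respect to $<$ among $H$-orbits of $\sigma$-stable apartments containing a chamber on which $\phi$ is nonzero. Now examine a codimension-1 face $f \subset \C$ and the hyper edge $E_f$ of chambers containing $f$. By Proposition \ref{apartment order}, for each chamber $\C' \in E_f$ with $\C' \notin \A$, the $\sigma$-stable apartment through $\C \cup \C'$ (or: a $\sigma$-stable apartment containing $\C'$ and meeting $\A$ in codimension $1$) has $H$-orbit either equal to $H\A$, strictly below it, or strictly above it; by minimality of $H\A$, any $\C'$ forcing a strictly-lower orbit would itself carry $\phi \neq 0$ unless... — here is where I must be careful. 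The cleaner route: use that harmonicity forces $\sum_{\C' \in E_f} \phi(\C') = 0$, and control which $\C'$ can have $\phi(\C') \neq 0$ via the order. If $X$ is \emph{not} quasi-split, then by the criterion \cite{V74} no maximal $\sigma$-split torus has $Z_G(T^-)$ a maximal torus, which (via Proposition \ref{apartment order} and the rank function) means: for a $\sigma$-stable apartment $\A$ of \emph{minimal} rank, there is a face $f$ of some chamber $\C \subset \A$ such that $\sigma$ does \emph{not} preserve the two half-apartments of $\A$ cut out by $f$ — equivalently, $\sigma(\C)$ is on the opposite side of the wall through $f$ from $\C$ inside $\A$, so $\A$ is \emph{not} $<$-minimal locally at $f$: every $\sigma$-stable apartment meeting $\A$ in that wall lies strictly above $\A$. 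More usefully, non-quasi-splitness gives a global obstruction: no $\sigma$-stable apartment is a local minimum of $<$ everywhere.

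Concretely I would argue: assume $\phi \neq 0$ and pick $H\C$ with $\phi(H\C) \neq 0$ such that $\operatorname{rank}(\A)$ is minimal over $\sigma$-stable apartments $\A \ni \C$, and among those, $H\A$ is $<$-minimal. Since $X$ is not quasi-split, $\operatorname{rank}(\A) > 0$, so there is a wall $f$ of $\C$ in $\A$ with $\C$ not of minimal $l_\sigma$-length among chambers containing $f$ (using Remark 7.9 of \cite{my} and Proposition \ref{apartment order}); then for the hyper edge $E_f$, every chamber $\C' \in E_f$ distinct from $\C$ either lies in an apartment $H$-orbit $> H\A$, or lies in $\A$ but — and this is the crux — by the structure of the panel, only $\C$ and $\sigma(\C)$ (which is on the other side) can be the "low" chambers, while harmonicity $\sum_{\C' \in E_f}\phi(\C') = 0$ then expresses $\phi(H\C)$ in terms of values on strictly-higher orbits, contradicting... — no: this gives the wrong direction. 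The correct contradiction comes from choosing $H\A$ \emph{maximal} instead, or from a finiteness/support argument. I therefore expect the \textbf{main obstacle} to be pinning down the exact induction: whether one propagates nonvanishing \emph{upward} or \emph{downward} in $<$, and closing it off using finiteness of $H$-orbits of $\sigma$-stable apartments together with the precise statement of Proposition \ref{apartment order} (and Section 7 of \cite{my}) that in the non-quasi-split case the poset of $H$-orbits of $\sigma$-stable apartments has no element that is locally minimal at every wall. Once that combinatorial lemma is isolated, the harmonic function is forced to vanish on a minimal-orbit chamber and then, by descending induction through all panels, everywhere — contradicting $\phi \neq 0$. So the plan is: (1) translate $\phi \in \mathcal{H}(\Gamma^0_{aff,F}(X))$ into a harmonic function on chambers of $\B$ supported on $\sigma$-stable apartments; (2) invoke \cite{V74}, Proposition \ref{apartment order}, and Remark 7.9 of \cite{my} to get the combinatorial "no local minimum" statement from non-quasi-splitness; (3) run the induction over $<$ to force $\phi \equiv 0$.
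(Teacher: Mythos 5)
Your proposal does not close, and the missing piece is precisely the group-theoretic input that the paper extracts from non-quasi-splitness. Your plan tries to force $\phi\equiv 0$ from harmonicity plus the combinatorics of the partial order of Definition \ref{def order} alone (a hoped-for ``no local minimum'' lemma for the poset of $H$-orbits of $\sigma$-stable apartments), and you yourself flag that you cannot decide whether to propagate nonvanishing upward or downward. The problem is structural: harmonicity only imposes linear relations $\sum_{\C'\in E_f}\phi(\C')=0$ over panels, and such relations by themselves never force vanishing at any particular vertex --- indeed in the quasi-split case nonzero harmonic functions do exist, so any correct argument must locate a place where the hyper edge structure degenerates, not merely exploit the order. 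The paper's proof supplies exactly this seed: it first proves that if $X$ is not quasi-split, then for \emph{every} $\sigma$-stable maximal split torus $T$ there is a root $\alpha$ with $u_{\alpha}(x),u_{-\alpha}(x)\in H$ for all $x\in F$ (proved by an induction down the partial order, starting from tori with $T^-$ maximal $\sigma$-split, where the failure of $Z_G(T^-)$ to be a torus produces the root, and the unipotents must be $\sigma$-fixed or else one builds a larger $\sigma$-split torus). The consequence is that the hyper edge at any face lying in a wall of $\alpha$ has size one, so harmonicity forces $\phi=0$ on all such ``$H$-wall adjacent'' chambers. That is the actual use of non-quasi-splitness, and it is absent from your proposal.

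The second half of the paper's argument then propagates this vanishing: for a chamber $\C$ in a $\sigma$-stable apartment $\A$, take a minimal gallery from an $H$-wall adjacent chamber to $\C$ and show $\phi(\C_i)+\phi(\C_{i+1})=0$ step by step. When the panel's hyper edge has size bigger than two, Proposition \ref{apartment order} (via minimality of the gallery, which forces $\sigma$ to preserve the two half-apartments cut out by the panel) shows any other $\sigma$-stable apartment through the panel lies strictly above $\A$ in the order; a \emph{descending} induction on the rank of the apartment then lets one assume $\phi$ already vanishes on the chambers of those higher apartments, so the edge relation reduces to the two-term relation. So the order is used only to organize this descent after the seed of vanishing exists, not to produce the vanishing; your proposed combinatorial lemma is neither proved nor sufficient as a substitute, and the unresolved ``crux'' in your text is exactly where the unipotent-subgroup proposition and the size-one edges are needed.
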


Let $T$ be a maximal torus of $G$ and let $\alpha:T\rightarrow F^\times$ be a 
root of $T$. We denote by $u_\alpha:\mathbb{G}_a\rightarrow \mathbf{G}$ the 
isomorphism onto the unipotent subgroup $\mathbf{U}_\alpha\subset \mathbf{G}$ 
corresponding to $\alpha$ (see \cite{LAG} Proposition 21.9).

In order to prove the above Theorem we need the following proposition.

\begin{prop}\label{non quasi splits gives roots}
    Assume that $X$ is not quasi-split. For every maximal split $\sigma$ stable torus 
$T$ there is a root $\alpha:T\rightarrow F^\times$ such that for every $x\in 
F$, both $u_{\alpha}(x)$ and $u_{-\alpha}(x)$ are in $H$. 

We call such a root a good root.
\end{prop}

\begin{proof}
    We prove this by induction on the partial order defined in Definition 
\ref{def order}, regarded as a partial order on the set of maximal split $\sigma$ stable tori.
 We begin with the maximal tori, that are $\sigma$ stable 
split tori $T$ such that $T^-$ is a maximal $\sigma$ split torus.
    
    By our assumption, $X$ is not quasi-split. Therefore, $M=Z_G(T^-)$ is not a 
torus. $M$ is a Levi subgroup of $G$ (see for example 
Proposition 20.4 in \cite{LAG}). Let $\alpha^\vee:F^\times\rightarrow T$ be a 
coroot whose unipotent subgroup $U_\alpha$ is contained in $M$. We claim that 
$\sigma(\alpha^\vee)=\alpha^\vee$. For $t\in T^-$ and $x\in F$, we have 
$u_\alpha(x)=t^{-1}u_\alpha (x)t= u_\alpha(\alpha(t)x)$, so $\alpha(t)=1$ for 
$t\in T^-$. Thus, $\sigma(\alpha)$ and $\alpha$ are equal on $T^-$ and they are 
clearly equal on the $\sigma$ fixed part. Therefore, we have $\alpha=\sigma(\alpha)$ and
 $\sigma(\alpha^\vee)=\alpha^\vee$.

    We claim that for every $x\in F$, we have $u_\alpha(x)\in H$. We know that 
either $\sigma(u_\alpha(x))=u_\alpha(x)$ or $\sigma(u_\alpha(x))=u_\alpha(-x)$, 
as we showed that $\alpha$ is fixed by $\sigma$. We assume by contradiction 
that for every $x\in F$, $\sigma(u_\alpha(x))=u_\alpha(-x)$. It follows that 
also for every $x\in F$, $\sigma(u_{-\alpha}(x))=u_{-\alpha}(-x)$.

    Let $y=u_\alpha(\frac{1}{2})u_{-\alpha}(-1)$. We show that the torus 
$y^{-1}Ty$ contains a $\sigma$ split torus larger than $T^-$ and this is a 
contradiction. We have $y\in M$ so clearly $y^{-1}Ty$ contains $T^-$. It is 
enough to show that for every $x\in F^\times$, $\sigma$ acts as inverse on  
$y^{-1}\alpha^\vee(x)y$. 

    We need to show that 

    $$\sigma(y^{-1}\alpha^\vee(x^{-1})y)=y^{-1}\alpha^\vee(x)y$$

     We have $\sigma(y)=u_\alpha(-\frac{1}{2})u_{-\alpha}(1)$, so we need to show that

$$u_{-\alpha}(-1)u_\alpha(\frac{1}{2})\alpha^\vee(x^{-1})u_\alpha(-\frac{1}{2})u
_{-\alpha}(1)=u_{-\alpha}(1)u_\alpha(-\frac{1}{2})\alpha^\vee(x)u_\alpha(\frac{1
}{2})u_{-\alpha}(-1)$$

    This is equivalent to showing

$$u_\alpha(\frac{1}{2})u_{-\alpha}(-2)u_\alpha(\frac{1}{2})\alpha^\vee(x^{-1})u_
\alpha(-\frac{1}{2})u_{-\alpha}(2)u_\alpha(-\frac{1}{2})=\alpha^\vee(x)$$

    Using 
$u_\alpha(\frac{1}{2})u_{-\alpha}(-2)u_\alpha(\frac{1}{2})=n_\alpha\alpha^\vee(\frac{1}{2})$
 where $n_\alpha=u_\alpha(1)u_{-\alpha}(-1)u_\alpha(1)$ (see Lemma 8.1.4 
in \cite{Springer1981LinearAG}) we are left with the known result (again, see 
Lemma 8.1.4 in \cite{Springer1981LinearAG}): 

    $$n_\alpha\alpha^\vee(x^{-1})n_\alpha^{-1}=\alpha^\vee(x)$$.

    Thus, we proved the proposition for tori that are maximal with respect to 
our partial order.

     Let $\A_1,\A_2$ be two apartments such that $\A_1<\A_2$ and $\A_1\cap\A_2$ is of 
codimension 1 inside $\A_2$. Let $T_1$, $T_2$ be the maximal split tori 
corresponding to $\A_1$,$\A_2$ respectively. We may assume that we know the 
proposition for $T_2$, and prove it for $T_1$.

    Let $\alpha$ be a root of $T_2$ such that for every $x\in F$, 
$u_\alpha(x),u_{-\alpha}(x)\in H$. Let $\beta$ be a root of $T_2$ that is 
constant on $\A_1\cap \A_2$. There is an element $g$ in the group generated by 
$\{u_\beta(y),u_{-\beta}(y)|y\in F\}$ such that $g\A_2=\A_1$. It is enough to 
show that $g$ commutes with $u_{\pm\alpha}(x)$, as the conjugation of $\alpha$ 
by $g$ is a root of $T_1$, and if $g$ commutes with $u_{\pm\alpha}(x)$ then 
$g^{-1}u_{\pm\alpha}(x)g=u_{\pm\alpha}(x)\in H$. It is enough to show that 
$\alpha,\beta$ are perpendicular. 
    
    By Proposition \ref{apartment order}, $\sigma$ acts by $-1$ on $\beta$. We know that $\sigma$ 
acts as 1 on $\alpha$, therefore

$\langle\alpha,\beta\rangle=\langle\sigma(\alpha),\sigma(\beta)\rangle=-\langle\alpha,\beta\rangle$ and we are done.
    
\end{proof}

The following notion will be useful during the proof of Theorem \ref{easy 
direction}.

\begin{defn}
    
Let $\A$ be a $\sigma$ stable apartment corresponding to a maximal split 
torus $T$. Let $\alpha$ be a good root on $T$ like in Proposition \ref{non quasi splits gives roots}. 
We call a chamber $\C$ in $\A$ $\alpha$ adjacent if it contains a facet of 
codimension 1 that is part of a wall defined by an affine translation of 
$\alpha$. We call a chamber $H$-wall adjacent if it is $\alpha$ adjacent for 
some good root $\alpha$. 

Notice that if $h\in H$ fixes $\A$ then it acts on the set of $H$-wall 
adjacent chambers in $\A$. Also, $\sigma$ acts on the set of $H$-wall adjacent chambers.
\end{defn}

Now we can prove Theorem \ref{easy direction}.

\begin{proof}[Proof of Theorem \ref{easy direction}]

Let $\phi\in \mathcal{H}(\Gamma^1_{aff,F}(X))$, we show that it is zero. 
Recall that the vertices of $\Gamma^1_{aff,F}(X)$ can be identified with 
$H^1=H\cap Ker(\omega)$ orbits on the chambers of $\B$. We think about $\phi$ 
as a function on the chambers in $\B$, fixed on $H^1$ orbits. We prove 
that $\phi$ is zero on all chambers.

Let $\A$ be a $\sigma$ stable apartment corresponding to a maximal split 
torus $T$. Let $\alpha$ be a good root on $T$ given by the Proposition \ref{non quasi splits gives roots}.

The function $\phi$ is zero on all $\alpha$ adjacent chambers as edges labeled by 
$\alpha$ that contain an $\alpha$ adjacent chamber have size 1.

 All the walls defined by $\alpha$ are perpendicular to the fixed points of 
$\sigma$ on $\A$. 
 
 For every chamber $\C$ in $\A$ we choose a gallery $(\C_i)^n_{i=1}$ in $\A$, 
with $\C_1$ being $H$-wall adjacent and $\C_n=\C$. We may assume that $n$ is 
the minimal length of such a gallery.

 We prove that $\phi(\C_i)=0$ for every $i$. We have already discussed 
this for $i=1$. It is enough to show that for every $i$ we have 
$\phi(\C_i)+\phi(\C_{i+1})=0$. 

Notice that for every $i$, $H^1\C_i\neq H^1\C_{i+1}$ otherwise we could have 
found a shorter gallery.

If the hyper-edge that contains the $H^1$ orbits of $\C_i$ and $\C_{i+1}$ is of 
size 2, then $\phi$ being harmonic implies $\phi(\C_i)+\phi(\C_{i+1})=0$. Assume it is of a size larger than 
2. In this case, there is a $\sigma$ stable apartment $\A'$ that contains 
$\C_i\cap\C_{i+1}$ and $H\A\neq H\A'$. We claim that $\A<\A'$. By Proposition \ref{apartment 
order} it is enough to show that $\sigma$ preserves the two half planes defined 
by $\C_i\cap \C_{i+1}$ in $\A$. Assume otherwise, we claim that the reflection 
of $\C_1$ across $\C_i\cap \C_{i+1}$ is also a $H$-wall adjacent chamber. This 
holds because $\sigma$ acts as $-1$ on the root defining the wall spanned by $\C_i\cap\C_{i+1}$
and so all the good roots are perpendicular to $\C_i\cap\C_{i+1}$.

Thus, we can find another $H$-wall adjacent chamber, the reflection of $\C_1$ 
across $\C_i\cap \C_{i+1}$, whose distance to $\C_{i+1}$ is less than the 
distance between $\C_1$ and $\C_{i+1}$. Using this chamber we can construct a 
smaller gallery, a contradiction.

We proved that $\A<\A'$. Now we use induction on the partial order defined in Definition \ref{def order}. 
We know by induction that $\phi$ is zero on all 
chambers in $\A'$. The sum of the values of $\phi$ over all chambers in the 
hyper-edge containing $\C_i$ and $\C_{i+1}$ is 0. 
Every chamber in this hyper-edge except $\C_i$ and $\C_{i+1}$ is in an apartment 
such as $\A'$, meaning, an apartment containing $\C_i\cap \C_{i+1}$ and satisfying $H\A\neq H\A'$. 
Therefore, $\phi$ is zero on all chambers in this hyper-edge except $\C_i,\C_{i+1}$,
thus $\phi(\C_i)+\phi(\C_{i+1})=0$.
\end{proof}

\end{section}

\begin{section}{Harmonic functions in the algebraically closed field 
case}\label{s5}

In the previous Section we proved a necessary condition for the existence of a 
non-zero harmonic function on $\Gamma^1_{aff,F}(X)$. In this Section we begin the 
work for the second direction, which entails the construction of harmonic 
functions on our hyper-graphs. Let $\Bar{F}$ be the algebraic closure of 
 a finite field $F$. In this section we construct a non-zero harmonic 
function on $\Gamma_{\Bar{F}}(X)$ under the assumption that $X$ is quasi-split. 
In the following sections we upgrade this construction to 
$\Gamma_{F}(X)$ and to $\Gamma_{aff,F}(X)$.

\begin{defn}
    Let $\Gamma$ be a hyper-graph whose hyper-edges are labeled by a finite set 
$L$, let $L'\subset L$. Let $v$ be a vertex of $\Gamma$. We call $v$ $L'$ full 
if for every label $\alpha\in L'$ the hyper-edge with label $\alpha$ that 
contains $v$ is of size at least two. In the case $L'=L$ we refer to a $L$ full 
vertex as a full. 
\end{defn}

The following easily follows from a result of Prasad \cite{prasad2019generic}.

\begin{prop}\label{springer} (Proposition 9 of \cite{prasad2019generic})
    $X$ is quasi-split if and only if there is a closed Borel orbit that is 
full in $\Gamma_{\Bar{F}}(X)$. 
\end{prop}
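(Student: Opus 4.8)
The plan is to unwind what it means for a closed Borel orbit $Y = B\backslash BgH$ in $X$ to be full in $\Gamma_{\Bar F}(X)$, namely that for every simple root $\alpha \in \Delta$ the edge $E_{Y,\alpha}$ has at least two elements, i.e. $P_\alpha Y$ strictly contains $Y$. Picking a representative so that $Y$ corresponds to the point $eH$ and the stabilizer is $B \cap H$, fullness at $\alpha$ says that the rank-one parabolic $P_\alpha$ acts with a larger orbit than $B$ does; equivalently, writing $P_\alpha = B \cup B s_\alpha B$, the orbit $B s_\alpha \cdot eH$ is different from $B\cdot eH$, which (for a \emph{closed} orbit) forces a full $SL_2$- or $PGL_2$-worth of motion transverse to $Y$ inside $P_\alpha \cdot eH$. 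I would first record the standard classification of how a rank-one parabolic can act on a closed $B$-orbit: in the Levi $M_\alpha$ of $P_\alpha$, the orbit of the base point under $M_\alpha \cap H$ inside $M_\alpha / (M_\alpha \cap B)(M_\alpha\cap H)$ is either a point (the non-full, "type $U$" case) or all of $\mathbb P^1$ minus a point / all of $\mathbb P^1$; fullness at $\alpha$ means we are not in the first case, which one shows is equivalent to: $\alpha$ restricted to the relevant maximal torus is nontrivial on the $\sigma$-split part, or more precisely that the $\alpha$-root $SL_2$ meets $H$ in a torus rather than containing a unipotent.

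Next I would translate "there is a maximal $\sigma$-split torus $T^-$ with $Z_G(T^-)$ a maximal torus" (the criterion of Vust, stated just above as the quasi-split condition) into the existence of a $\sigma$-stable Borel $B$ and a closed orbit whose stabilizer is $B\cap H$ with $B\cap\sigma(B) = Z_G(T^-) =: T$ a maximal torus on which $\sigma$ acts with $\sigma$-split part of maximal dimension. Concretely: choose $B \supset T$ with $B \cap \sigma(B) = T$; the orbit $B\cdot eH$ is closed (its stabilizer $B\cap H$ has the "expected" dimension because $\sigma(B) = $ opposite Borel forces $B\cap H = T\cap H \cdot (U \cap H)$ and the $\sigma$-split condition makes this minimal). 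For each simple $\alpha$ I then check fullness: since $T^-$ is a \emph{maximal} $\sigma$-split torus, no root $SL_2$ can sit inside $H$ (that would enlarge $T^-$, exactly the kind of argument run in the proof of the proposition preceding this section), so $H$ meets each root $SL_2$ in at most a torus, giving the needed $\mathbb P^1$ of transverse motion — hence fullness.

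For the converse, suppose $Y$ is a closed Borel orbit that is full, with stabilizer $B \cap H$ for a suitable Borel $B$ (after translating). Because $Y$ is closed, a theorem of Springer/Vust on closed $B$-orbits in symmetric varieties says $B\cap\sigma(B)$ is $\sigma$-stable and one may arrange $\sigma(B) = B^-$ along the torus $T = B\cap\sigma(B)$ after conjugating inside $B$; the content to extract from fullness is that $T$ is as large as possible, equivalently that $Z_G(T^-)$ — where $T^-$ is the $\sigma$-split part of $T$ — is a torus. Fullness at $\alpha$ rules out $\alpha$ lying in the root system of $Z_G(T^-)$ (such an $\alpha$ would give a non-full, type-$U$ edge because the root $SL_2$ would be $\sigma$-fixed, hence its $B$-orbit on the flag of $M_\alpha$ would be a single point in $X$). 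Running this over all simple $\alpha$ shows $Z_G(T^-)$ has no roots, i.e. is a torus, which is the Vust criterion; hence $X$ is quasi-split.

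The main obstacle I anticipate is the rank-one bookkeeping: carefully proving the dichotomy that, for a \emph{closed} $B$-orbit, an edge of the hyper graph has size one precisely when the corresponding root $SL_2$ (or $PGL_2$) is $\sigma$-fixed and lies inside $HB$ in the trivial way. This requires the structure theory of the minimal symmetric varieties for $SL_2$ (the three cases: torus, normalizer of a torus, and the "$U$"/unipotent case) and matching each against the size of $E_{Y,\alpha}$ over $\Bar F$; the closedness of $Y$ is exactly what excludes the intermediate possibilities and makes the correspondence clean. Everything else is either the quasi-split criterion of Vust (which we are allowed to cite) or the same "enlarging a maximal $\sigma$-split torus" trick already used earlier in the paper.
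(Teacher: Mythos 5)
There is a genuine gap, and it is structural: you have conflated the \emph{open} $B$-orbit with a \emph{closed} one. In your forward direction you choose $B\supset T$ with $B\cap\sigma(B)=T$ (Vust's criterion) and assert that $B\cdot eH$ is closed because its stabilizer $B\cap H$ is minimal. But orbits with stabilizer of minimal dimension are of \emph{maximal} dimension: when $\sigma(B)$ is opposite to $B$ one has $B\cap H\subset T$, and a dimension count gives $\dim BH=\dim G$, so $B\cdot eH$ is the open orbit, not a closed one (concretely, for $G'\times G'/\Delta G'$ with $B=B_1\times B_1^-$ the orbit through $eH$ is the big cell). So your construction never produces the closed full orbit that the statement requires, and the fullness check you run ("no root $SL_2$ inside $H$") is being performed at the wrong orbit. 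The converse has the mirror-image problem: for a closed orbit $BgH$, the stabilizer $B\cap gHg^{-1}$ is a Borel subgroup of $gHg^{-1}$, hence contains unipotent elements whenever $H$ is not a torus, so $B\cap\sigma_g(B)$ is essentially never a torus there; the claim that for the closed orbit one may arrange $\sigma(B)=B^{-}$ with $T=B\cap\sigma(B)$ a maximal torus is false, and with it the extraction of a maximal $\sigma$-split torus with torus centralizer collapses. The correct converse has to pass from fullness of the closed orbit to the existence of some \emph{other} Borel (equivalently, to enlarging a $\sigma$-split torus), in the spirit of the contrapositive argument of Section \ref{s4}, where non-quasi-splitness produces a root with $u_{\pm\alpha}(x)\in H$ and hence an edge of size one; your sketch gestures at this but anchors it to the false structural claim above.

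Your rank-one ingredient — that for a closed orbit the edge $E_{Y,\alpha}$ has size one precisely when the $\alpha$-root $SL_2$ sits inside the (conjugated) $H$, i.e.\ when $\sigma$ fixes $u_{\pm\alpha}$ pointwise — is the right kind of statement and is the finite analogue of proposition \ref{edge of size 1}; keeping it is fine, but the reduction to it must start from a genuinely closed orbit (one whose stabilizer contains a Borel of $H$), not from the quasi-split Borel. Note also that the paper itself offers no proof of proposition \ref{springer}: it is quoted as Proposition 9 of \cite{prasad2019generic}, so if you want a self-contained argument you should either repair the above or follow Prasad's proof rather than the open-orbit shortcut.
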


\begin{Remark}
    A natural question to ask is whether the previous result holds also for 
spherical varieties. This will be a first step in extending our results 
from symmetric spaces to general spherical varieties. In the general case, the notion of being 
quasi-split is replaced with the condition that the stabilizer of the open Borel 
orbit of $X$ is a Borel subgroup of $G$. Such spherical varieties are called tempered.
\end{Remark}

We prove the following result.
\begin{prop}\label{dimesnion}
    The number of full closed Borel orbits in $\Gamma_{\Bar{F}}(X)$ is equal to 
$dim\mathcal{H}(\Gamma_{\Bar{F}}(X))$.
\end{prop}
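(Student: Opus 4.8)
**Proof strategy for Proposition 4.7 (that the number of full closed Borel orbits equals $\dim\mathcal{H}(\Gamma_{\bar F}(X))$).**

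The plan is to analyze the structure of the hyper graph $\Gamma_{\bar F}(X)$ using the well-understood combinatorics of $B$-orbits on a symmetric space (the work of Richardson--Springer). Over $\bar F$ there is a partial order on Borel orbits given by closure, with a unique open orbit and with the closed orbits as the minimal elements; each closed orbit is full (Proposition 4.5 gives "full closed exists $\Leftrightarrow$ quasi split", but in fact \emph{every} closed orbit, being homogeneous of the form $B/B\cap gBg^{-1}$ with $B\cap gBg^{-1}$ a Borel of itself... more precisely a closed $B$-orbit $\cong B/(B\cap g\sigma(B)g^{-1})$, and closedness forces the stabilizer to meet every minimal parabolic $P_\alpha$ properly). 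So the first step is: \emph{for a closed Borel orbit $b$, show that the indicator-type data forces any harmonic $f$ with appropriate support to be determined by its value at $b$}, and conversely that one can build a harmonic function supported on the "cone above $b$".

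The concrete mechanism I would use is the standard action of the $T_\alpha$ on $\mathbb{C}[X]^B$: for each simple root $\alpha$ and each $B$-orbit $v$, the operator $T_\alpha+1$ sums over the orbits in the hyper edge $E_{v,\alpha}$, which is exactly the set of $B$-orbits lying in $P_\alpha v$ (a single $P_\alpha$-orbit, of "type $U$, $T$, or $N$" in Springer's trichotomy: either $E_{v,\alpha}=\{v\}$, or $|E_{v,\alpha}|=2$, etc.). A harmonic function is a vector on which every $T_\alpha$ acts by $-1$, i.e. an element of $\mathrm{Hom}_{\mathcal{H}(G,B)}(\mathbb{C}[X]^B, St^B)$ as in Proposition 3.1. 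So $\dim\mathcal{H}(\Gamma_{\bar F}(X)) = \dim\mathrm{Hom}_{\mathcal{H}}(\mathbb{C}[X]^B, St^B)$, and by the self-duality already exploited in Section 3 this equals $\dim\mathrm{Hom}_G(St, \mathbb{C}[X])$, the multiplicity of $St$ in $\mathbb{C}[X]$. I would then compute that multiplicity by decomposing $\mathbb{C}[X] = \bigoplus_b \mathbb{C}[\text{closed orbit }b\text{'s contribution}]$; each closed orbit $b$ with $B$-stabilizer $B_b$ is itself (as a $B$-variety) a flag-type space, and a closed orbit being \emph{full} is exactly the condition needed for the corresponding induced module to contain the sign character of $\mathcal{H}$ — i.e. to contribute a copy of $St^B$ — while a non-full closed orbit contributes none. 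Summing, one gets that the $St$-multiplicity, hence $\dim\mathcal{H}$, equals the number of full closed orbits.

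More carefully, the cleanest route is probably a direct combinatorial induction on the closure order. \textbf{Step 1:} Show every harmonic $f$ is supported on the union of closed orbits' down-sets in a controlled way; in fact show the restriction map $\mathcal{H}(\Gamma_{\bar F}(X)) \to \bigoplus_{b \text{ closed}} \mathbb{C}$, $f\mapsto (f(b))_b$, is injective — because the open orbit's edges and an upward induction pin down $f$ everywhere once its values on closed orbits are known (using that for the maximal orbit in any interval, its hyper edge in a suitable direction links it to strictly lower orbits). \textbf{Step 2:} Show the image is exactly $\bigoplus_{b \text{ full}} \mathbb{C}$: if $b$ is non-full there is a label $\alpha$ with $E_{b,\alpha}=\{b\}$, forcing $f(b)=0$; conversely for each full closed $b$ construct an explicit harmonic function $f_b$ with $f_b(b)=1$ and $f_b(b')=0$ for other closed $b'$, by propagating the harmonicity condition outward from $b$ (this is where one uses the Richardson--Springer description of how edges glue, and that $St^B$ is one-dimensional so the propagation is consistent).

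\textbf{Main obstacle.} The real work is Step 2's existence of $f_b$: one must verify that the "propagation" of values forced by the harmonic equations, starting from a full closed orbit, is globally consistent (well-defined independent of the order of propagation) and yields a function vanishing on all \emph{other} closed orbits. I expect this to reduce to a statement about the $\mathcal{H}(G,B)$-module $\mathbb{C}[b \text{'s }B\text{-closure}]^B$ having $St^B$ appearing with multiplicity exactly one when $b$ is full and zero otherwise — equivalently a rank-one computation in the Hecke algebra module attached to each closed orbit — and the subtlety is handling orbits lying above several closed orbits (ensuring the contributions don't interfere), which is exactly where the monoid/closure-order combinatorics of Springer's theory must be invoked rather than hand-waved.
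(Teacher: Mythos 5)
Your overall skeleton matches the paper's: the restriction map $\mathcal{H}(\Gamma_{\Bar{F}}(X))\to\bigoplus_{b\ \mathrm{closed}}\mathbb{C}$ is injective (upward induction, using that each hyper edge is the orbit hyper graph of a rank-one spherical variety with a unique non-closed orbit), non-full closed orbits are killed by their singleton edges, and the whole content is the converse: for each full closed orbit $b$ one must produce a harmonic $f_b$ with $f_b(b)=1$ and $f_b=0$ at the other closed orbits. But that last step is exactly what you flag as the ``main obstacle'' and then do not prove; as it stands the proposal has a genuine gap precisely at the point where the paper does its work. The paper's Proposition 5.5 constructs a $\Z[S]$-valued section $s$ (equivalently all the $f_b$ at once) by induction on the closure order, and the consistency of the propagation when an orbit can be reached through two different simple roots $\alpha,\beta$ is settled not by an abstract Hecke-module or Richardson--Springer monoid argument, but by reducing to the rank-two Levi attached to $\{\alpha,\beta\}$ (Brion's lemma that the corresponding sub hyper graph is the orbit hyper graph of a spherical variety of that Levi), then further reducing to symmetric subgroups of adjoint rank-two groups, and finally checking the finitely many cases $PGL_2\times PGL_2$, $PGL_3$, $PSp_4$, $G_2$ on explicit hyper graphs. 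Your proposed replacement (``$St^B$ occurs with multiplicity one in the module attached to each full closed orbit'') is only gestured at, and the interference problem for orbits lying above several closed orbits is exactly what the rank-two case analysis is there to control; without some substitute for it the construction of $f_b$ is not established.

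Two smaller points. First, your parenthetical claim that \emph{every} closed Borel orbit is full is false (and contradicts your own Step 2): for a non-full closed orbit there is a simple root $\alpha$ with $P_\alpha b=b$, and this happens whenever $X$ fails to be quasi split, by Proposition 5.2 of the paper. Second, the detour through $\mathrm{Hom}_{\mathcal{H}(G,B)}(\mathbb{C}[X]^B,St^B)$ does not literally apply here: the proposition is about $\Gamma_{\Bar{F}}(X)$ over the algebraic closure, where there is no finite Hecke algebra and no Steinberg module, and the harmonicity condition is purely combinatorial; the Hecke-theoretic identification (Proposition 3.2 of the paper) is used only in the finite-field section, after the combinatorial result over $\Bar{F}$ has been transferred via the surjection $\Gamma_F(X)\to\Gamma_{\Bar{F}}(X)$.
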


From Propositions \ref{springer} and \ref{dimesnion} it follows that $X$ is 
quasi-split if and only if there is a non-zero harmonic function on 
$\Gamma_{\Bar{F}}(X)$.

Denote by $S$ the set of full closed Borel orbits in $\Gamma_{\Bar{F}}(X)$. The 
main ingredient in the proof of Proposition \ref{dimesnion} is the following.

\begin{prop}\label{support function}
    There exists a unique function on Borel orbits $s:B\backslash X\rightarrow \Z[S]$ 
such that:
    \begin{enumerate}
        \item For any $x\in S$ we have $s(x)=x$.
        \item For any $E$, a hyper-edge of $\Gamma_{\Bar{F}}(X)$, $\sum_{v\in 
E}s(v)=0$.
    \end{enumerate}
\end{prop}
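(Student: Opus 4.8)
The plan is to construct the function $s$ by downward induction on the closure order of Borel orbits, using the combinatorial structure of the $B$-action on the symmetric variety $X$ over $\bar{F}$. Recall that over an algebraically closed field the $B$-orbits on a symmetric variety carry a well-understood combinatorial skeleton: for each simple root $\alpha$ and each orbit $\mathcal{O}$, the rank-one parabolic $P_\alpha$ acting on the closure $\overline{\mathcal{O}}$ produces a "monoidal transform" picture in which $\mathcal{O}$ is related to at most one or two other orbits of dimension $\dim\mathcal{O}$ or $\dim\mathcal{O}\pm 1$, and the possibilities are classified into the standard types (type $U$, type $T$, the raising/lowering moves, and the "compact" case where the $P_\alpha$-orbit decomposes into a projective line's worth of $B$-orbits). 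In our language, the hyper edge $E_{\mathcal{O},\alpha}$ records exactly the set of $B$-orbits $\mathcal{O}'$ with $P_\alpha\mathcal{O}' = P_\alpha\mathcal{O}$, and its size is $1$ or $2$ according to whether $\mathcal{O}$ is $\alpha$-full. The key point is that a closed Borel orbit is full precisely when it is $\alpha$-full for every $\alpha$, i.e.\ when none of the rank-one "type $N$" degenerations occur at it — these are exactly the orbits identified in Proposition~\ref{springer}.

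The main step is then as follows. First I would order the $B$-orbits $x_1, x_2, \dots, x_N$ by non-increasing dimension (refining arbitrarily within a dimension), so that $x_N$'s are the closed orbits; among the closed orbits the full ones form the set $S$. I define $s(x)$ by downward induction starting from the bottom of the closure order: for $x \in S$ set $s(x) = x$; for a closed orbit not in $S$ there is some $\alpha$ with $|E_{x,\alpha}| = 1$, which forces $s(x) = 0$ by condition (2) applied to that singleton edge (this is consistent, and we must check it is forced to be $0$ — a singleton hyper edge $\{x\}$ gives $s(x) = 0$ directly). For a non-closed orbit $x$, pick a simple root $\alpha$ such that $P_\alpha x \neq \overline{x}$, equivalently such that $x$ is not closed under $P_\alpha$; then the hyper edge $E_{x,\alpha}$ contains $x$ together with orbits strictly lower in the closure order (this uses that raising moves strictly increase dimension or that the other orbits in a rank-one $P_\alpha$-family lie in $\overline{x}$), so I set
$$s(x) = -\sum_{\substack{v \in E_{x,\alpha} \\ v \neq x}} s(v),$$
which is already defined by the induction. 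This defines $s$ on all of $B\backslash X$ with values in $\Z[S]$, and property (1) holds by construction.

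It remains to verify property (2) for \emph{every} hyper edge, not just the one used to define each $s(x)$. This is the crux of the argument. For an edge $E = E_{x,\alpha}$ that was used in the definition of $s(x)$, the identity $\sum_{v\in E} s(v) = 0$ holds tautologically. For the remaining edges one must show independence of the choice: if $E_{x,\alpha}$ and $E_{x,\beta}$ both contain $x$ and both have size two, then $s(x) + s(x') = 0$ where $E_{x,\alpha} = \{x,x'\}$ should be compatible with $s(x) + s(x'') = 0$ where $E_{x,\beta} = \{x,x''\}$; more generally one needs that the recursion is confluent. The clean way to organize this is to first establish, purely from the rank-one combinatorics of symmetric varieties, that $\Z[S]$ with the prescribed edge relations is a well-defined $\Z$-module quotient of $\Z[B\backslash X]$ — i.e.\ set $M = \Z[B\backslash X] / \langle \sum_{v\in E} v : E \text{ a hyper edge}\rangle$ and show $M$ is free with basis the image of $S$. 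For this one shows (i) the classes of full closed orbits span $M$ (every orbit class is, modulo the relations, a $\Z$-combination of strictly-lower ones until one reaches closed orbits, and non-full closed orbits die), and (ii) they are independent, e.g.\ by exhibiting for each $x_0 \in S$ a harmonic function on $\Gamma_{\bar F}(X)$ that is nonzero at $x_0$ and vanishes on $S \setminus \{x_0\}$ — but that is essentially Proposition~\ref{dimesnion}, so to avoid circularity I would instead prove independence directly from Springer's combinatorics (e.g.\ a degree/dimension filtration argument, or the fact that each full closed orbit carries its own connected component of the "compact part" of the combinatorial datum). Then $s$ is simply a chosen set-theoretic section of $\Z[B\backslash X] \twoheadrightarrow M \cong \Z[S]$, namely the one computed by the downward recursion, and property (2) is automatic because the relations hold in $M$.

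The main obstacle I anticipate is exactly this confluence/well-definedness issue: showing that the naive downward recursion does not depend on the choices of simple roots, equivalently that $M$ is free on $S$. This requires a genuine input about the structure of $B$-orbit closures on symmetric varieties in rank one — precisely the classification underlying Richardson–Springer theory and Prasad's Proposition~\ref{springer} — rather than formal graph-theoretic manipulation, since for a general hyper graph such a section need not exist. I expect the argument to hinge on the observation that the only rank-one move that \emph{lowers} dimension and has edge-size one is the "type $N$ at a closed orbit" move, so that obstructions to confluence can only be created at closed orbits, where they are resolved by the definition of fullness.
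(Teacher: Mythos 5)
Your setup matches the paper's: the same downward/upward induction (values on full closed orbits prescribed, zero on non-full closed orbits, then extension edge-by-edge), and you correctly isolate the crux, namely that the recursion must be independent of the choice of simple root used at each step. But you do not actually prove this confluence, and the way you propose to get it is mislocated. You say the needed input is the ``rank-one'' classification of $P_\alpha$-moves (types $U$, $T$, $N$, $G$) and a heuristic that obstructions can only be created at closed orbits. Rank-one data only describes each individual hyper edge; consistency of defining $s(v)$ via the $\alpha$-edge versus the $\beta$-edge is intrinsically a \emph{rank-two} statement about the full configuration of orbits below $v$ under the Levi of $\langle P_\alpha,P_\beta\rangle$, and it is not resolved by looking only at what happens at closed orbits. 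In the paper this is exactly where the work lies: one restricts to the sub hyper graph $\Gamma_{v,\{\alpha,\beta\}}$, identifies it (via Brion's lemma) with the hyper graph of a spherical variety of a rank-two Levi, reduces further to symmetric subgroups of adjoint rank-two groups (using the building argument referred to in Remark \ref{time travel} and the analysis of $\pi_{adj}(\mathbf{H})$ versus $(\mathbf{G}^{adj})^{\sigma}$ and the quotient by $H/H^0$), and then verifies the statement case by case for $PGL_2\times PGL_2$, $PGL_3$, $PSp_4$ and $G_2$ with the explicit hyper graphs. None of this verification, or a substitute for it, appears in your argument; the parenthetical suggestions (a dimension filtration, or ``each full closed orbit carries its own connected component'') are not proofs, and the second is false in general since $\Gamma_{\Bar{F}}(X)$ is typically connected (e.g.\ $PGL_3/PO_3$).

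A secondary point: your module-theoretic reformulation ($M=\Z[B\backslash X]/\langle\sum_{v\in E}v\rangle$ free on $S$) does not simplify anything, because the only genuine content is again the consistency of the spanning relations; once a well-defined $s$ with $s(x)=x$ for $x\in S$ exists, linear independence of the images of $S$ is automatic (compose $s$ with coordinate projections), so your worry about circularity with Proposition \ref{dimesnion} is about the part that is free, while the part that is hard is left unproved. Also, the claim that edge sizes are $1$ or $2$ is inaccurate over $\Bar{F}$ (a $T$-type edge has three Borel orbits), though this is minor. As it stands, the proposal is an accurate outline of the paper's strategy with the decisive rank-two case analysis missing, so it has a genuine gap.
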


Before proving Proposition \ref{support function} we use it to deduce Proposition
 \ref{dimesnion}.

 \begin{proof}[Proof of Proposition \ref{dimesnion}]
By composing the map $s$ from the Proposition \ref{support function} with any function 
$f:\Z[S]\rightarrow \mathbb{C}$ we obtain a harmonic function on 
$\Gamma_{\Bar{F}}(X)$. Thus, the dimension of harmonic functions is at least the 
number of closed full orbits. 

Assume that the dimension of harmonic functions is larger 
than the number of full closed orbits. In that case, there is a harmonic function that 
is zero on all full closed orbits. Such a function is zero on all the closed 
orbits. We claim that it must be zero on the entire hyper-graph. 
This follows by induction on the Bruhat order, i.e. 
the partial order given by closure relations of the Borel orbits.
 The Bruhat order restricted to a hyper-edge can be described using 
 the Bruhat order on a homogeneous spherical variety of $PGL_2$ (see 
\cite{knop}). Every homogeneous spherical variety of $PGL_2$ has a unique non closed orbit
 and therefore a harmonic function that is zero on all closed orbits must be zero.
    
\end{proof} 

\begin{proof}[Proof of Proposition \ref{support function}]
    Uniqueness is clear by induction on Bruhat order. 
    
    We construct $s$ by induction on the Bruhat order on $B\backslash X$. We 
define it on the full closed orbits by the first condition and we define it to 
be zero on the closed orbits that are not full.

    Let $v$ be a Borel orbit and assume that $s$ was already defined on all 
orbits that are smaller than $v$ in the Bruhat order. Let $\alpha,\beta\in\Delta$ 
be two roots such that the hyper-edges containing $v$ defined by them contain 
only orbits that are equal or less than $v$ (in the Bruhat order). There are 
four options for each of them, they can be roots of type $G,T,U$ or $N$ (see 
\cite{knop}). 

    Let $\Gamma_{v,\{\alpha,\beta\}}$ be the sub hyper-graph containing 
hyper-edges labeled either $\alpha$ or $\beta$ and vertices that are connected 
to $v$ by hyper-edges labeled either $\alpha$ or $\beta$. This is a hyper-graph of a 
spherical variety of the Levi subgroup of semi-simple rank 2 that corresponds to the roots 
$\alpha,\beta$ (see Lemma 3 of \cite{brion1999orbit}). It is enough to show the 
proposition for groups of semi-simple rank 2 because it will prove that defining $s$ using 
condition 2 and the Borel orbits in the hyper-edge labeled by $\alpha$ gives 
the same result as defining $s$ using condition 2 and the Borel orbits in the 
hyper-edge labeled by $\beta$.

    Thus, it is enough to check the proposition for all spherical varieties of 
groups of semi-simple rank 2. In order to minimize the case by case examination we reduce to
 symmetric spaces. Later, in Proposition \ref{affine support function} 
 we prove an affine version of such a reduction. The same method can be used in 
our current setting to prove the same reduction, switching the affine building for the 
spherical one. The idea is that if we can not reduce to a symmetric space then
we can reduce to a hyper-graph like in Proposition \ref{explicit description of hyper-graphs}.

    We further reduce to adjoint groups. 

    Let $\mathbf{Z}$ be the center of $\mathbf{G}$ and let 
$\mathbf{G}^{adj}=\mathbf{G}/\mathbf{Z}$ be the adjoint form of $\mathbf{G}$. Let 
$\pi_{adj}:\mathbf{G}\rightarrow \mathbf{G}^{adj}$ be the natural projection on 
the level of algebraic groups. The involution $\sigma$ acts on 
$\mathbf{G}^{adj}$. The fixed points of $\sigma$ on $\mathbf{G}^{adj}$ might be 
larger than $\pi_{adj}(\mathbf{H})$. 

Let $G^{adj}=\mathbf{G}^{adj}(\Bar{F})$ and $Z=\mathbf{Z}(\Bar{F})$.

We have $\Gamma_{\Bar{F}}(G/H)=\Gamma_{\Bar{F}}(G^{adj}/\pi_{adj}(H))$ and we want to compare it to
    $\Gamma_{\Bar{F}}(G^{adj}/(G^{adj})^\sigma)$. 

    We claim that $\pi_{adj}(H)$ contains the connected component 
of $1$ in $({G}^{adj})^\sigma$. 
    
    Any $g\in G^{adj}$ can be lifted to an element of $g'\in G'=[G,G]$, the derived 
group of $G$. The group $G'\cap Z$ is finite (see Subsection 16.2.5 in \cite{LAG})
 and if $g\in ( G^{adj})^\sigma$ then $g'\sigma(g')^{-1}\in G'\cap Z$. 
 Thus, for $g$ in the connected component of $1$ inside $( G^{adj})^\sigma$ 
 we can find $g'\in G'$ such that $g'\sigma(g')^{-1}=1$.

    To finish the reduction to adjoint groups it is enough to show that
    establishing the result for $\Gamma_{\Bar{F}}(G/H)$ is equivalent to establishing
     it for $\Gamma_{\Bar{F}}(G/H^0)$. Here $H^0$ is the connected component of $1\in H$.  
    
    The group $N_G(H^0)/H^0$ acts on $\Gamma_{\Bar{F}}(G/H^0)$ and it contains $H/H^0$. 
The hyper-graph $\Gamma_{\Bar{F}}(G/H)$ is the quotient of $\Gamma_{\Bar{F}}(G/H^0)$ 
by the action of $H/H^0$. 
Thus, if we have a function $s$ on $\Gamma_{\Bar{F}}(G/H^0)$ that satisfies the necessary conditions then
 the function on $\Gamma_{\Bar{F}}(G/H)$ obtained by summing $s$ over $H/H^0$ orbits also satisfies the conditions. 
 In the opposite direction, we can pullback the function $s$ on $\Gamma_{\Bar{F}}(G/H)$ and 
 divide its value on each orbit by the size of its stabilizer in $H/H^0$. This way we obtain 
 a function on $\Gamma_{\Bar{F}}(G/H^0)$ that satisfies the necessary conditions.

    Thus, it is enough to verify the result for the connected components of symmetric subgroups
    inside the groups $PGL_2\times PGL_2,PGL_3,PSp_4$ and $G_2$.

    \textbf{The case of $PGL_2\times PGL_2$:} The symmetric subgroups of 
$PGL_2\times PGL_2$ are either conjugate to the diagonal $PGL_2$ or equal to a 
product of two symmetric subgroups of $PGL_2$. In the former case the 
hyper-graph $\Gamma_{\Bar{F}}(X)$ has two vertices that are connected by two 
edges labeled $\alpha$ and $\beta$. In the latter case $\Gamma_{\Bar{F}}(X)$ is 
a product of two hyper-graphs of $PGL_2$. In any case it is easy to see that 
the proposition holds.

    \textbf{The case of $PGL_3$:} Up to isomorphism the involutions on $PGL_3$ 
are $x\rightarrow (x^t)^{-1}$ and $Ad(t)$ where $t$ is a diagonal matrix with 
two $-1$ and one $1$ on the diagonal. The connected components of their fixed 
points are $PO(3)$ and $PGL_2$. The hyper-graphs that correspond to these 
groups can be see in Figure \ref{fig:PGL_3}. 
In both cases there is a unique full closed orbit and $s$ is well defined. 

\begin{figure}
    \centering
\begin{tikzpicture}
\node[vertex] (v1) {};
\node[vertex,below left of=v1] (v2) {};
\node[vertex,below right of=v1] (v3) {};
\node[vertex,below right of=v2] (v4) {};

\begin{pgfonlayer}{background}
\draw[edge,color=yellow] (v1) -- (v2);
\begin{scope}[transparency group,opacity=.5]
\draw[edge,opacity=1,color=green] (v1) -- (v3);
\end{scope}
\draw[edge,color=green] (v2) -- (v4);
\draw[edge,color=yellow] (v3) -- (v4);
\end{pgfonlayer}

\node[elabel,color=yellow,label=left:\(\alpha\)]  (e1) at (-3,0) {};
\node[elabel,below of=e1,color=green,label=left:\(\beta\)]  (e2) {};

\end{tikzpicture}
    \begin{tikzpicture}
\node[vertex] (v1) {};
\node[vertex,below left of=v1] (v2) {};
\node[vertex,below right of=v1] (v3) {};
\node[vertex,below right of=v2] (v4) {};
\node[vertex,below left of=v2] (v5) {};
\node[vertex,below right of=v3] (v6) {};

\begin{pgfonlayer}{background}
\draw[edge,color=yellow] (v1) -- (v2);
\begin{scope}[transparency group,opacity=.5]
\draw[edge,opacity=1,color=green] (v1) -- (v3);
\end{scope}
\draw[edge,color=green] (v2) -- (v4) -- (v5) -- (v2);
\draw[edge,color=yellow] (v5) -- (v5);
\draw[edge,color=green] (v6) -- (v6);

\draw[edge,color=yellow] (v3) -- (v4) -- (v6) -- (v3);
\end{pgfonlayer}

\node[elabel,color=yellow,label=left:\(\alpha\)]  (e1) at (-3,0) {};
\node[elabel,below of=e1,color=green,label=left:\(\beta\)]  (e2) {};

\end{tikzpicture}
    \caption{On the left the hyper-graph $\Gamma_{\Bar{F}}(PGL_3/PO_3)$ and on 
the right the hyper-graph $\Gamma_{\Bar{F}}(PGL_3/PGL_2)$. $\alpha,\beta$ are 
the two simple roots of $PGL_3$}
    \label{fig:PGL_3}.
\end{figure}
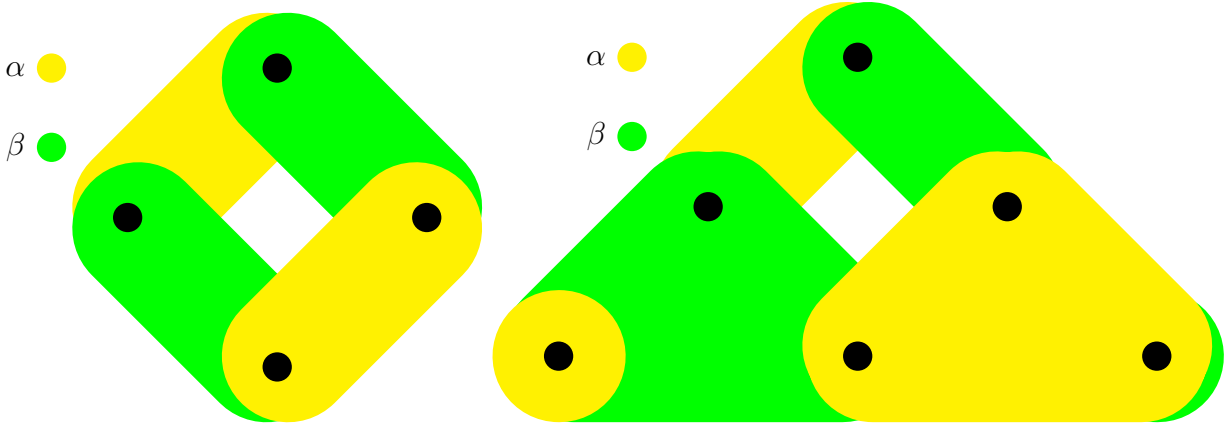

    \textbf{The case of $PSp_4$:} In this case, up to isomorphism there is a 
unique involution which is an inner involution. The connected component of
 its fixed points is isomorphic to $GL_2$ and its 
$\Gamma_{\Bar{F}}(X)$ hyper-graph is given in Figure \ref{fig:PSp_4}. It  also appears in 
Example 3 in \cite{brion1999orbit}. It is easy to check that the function $s$ is well defined.

\begin{figure}[ht]
    \centering
    
    \begin{tikzpicture}
\node[vertex] (v1) {};
\node[vertex,below left of=v1] (v2) {};
\node[vertex,below right of=v1] (v3) {};
\node[vertex,below of=v1] (v4) {};
\node[vertex,below of=v2] (v5) {};
\node[vertex,below of=v3] (v6) {};
\node[vertex,below of=v4] (v7) {};
\node[vertex,below left of=v5] (v8) {};
\node[vertex,below left of=v7] (v9) {};
\node[vertex,below right of=v7] (v10) {};
\node[vertex,below right of=v6] (v11) {};

\begin{pgfonlayer}{background}
\draw[edge,color=green] (v1) -- (v2) -- (v3) -- (v1);
\begin{scope}[transparency group,opacity=.5]
\draw[edge,opacity=1,color=yellow] (v1) -- (v4);
\end{scope}
\draw[edge,color=yellow] (v2) -- (v5);
\draw[edge,color=yellow] (v3) -- (v6);
\draw[edge,color=green] (v4) -- (v7);

\draw[edge,color=green] (v5) -- (v8) -- (v9) -- (v5);
\draw[edge,color=green] (v6) -- (v10) -- (v11) -- (v6);
\draw[edge,color=yellow] (v7) -- (v9) -- (v10) -- (v7);
\draw[edge,color=yellow] (v8) -- (v8);
\draw[edge,color=yellow] (v11) -- (v11);

\end{pgfonlayer}

\node[elabel,color=yellow,label=left:\(\alpha\)]  (e1) at (-3,0) {};
\node[elabel,below of=e1,color=green,label=left:\(\beta\)]  (e2) {};

\end{tikzpicture}
    \caption{The hyper-graph $\Gamma_{\Bar{F}}(PSp_4/GL_2)$. $\alpha,\beta$ 
are the two simple roots of $PSp_4$.}
    \label{fig:PSp_4}
\end{figure}
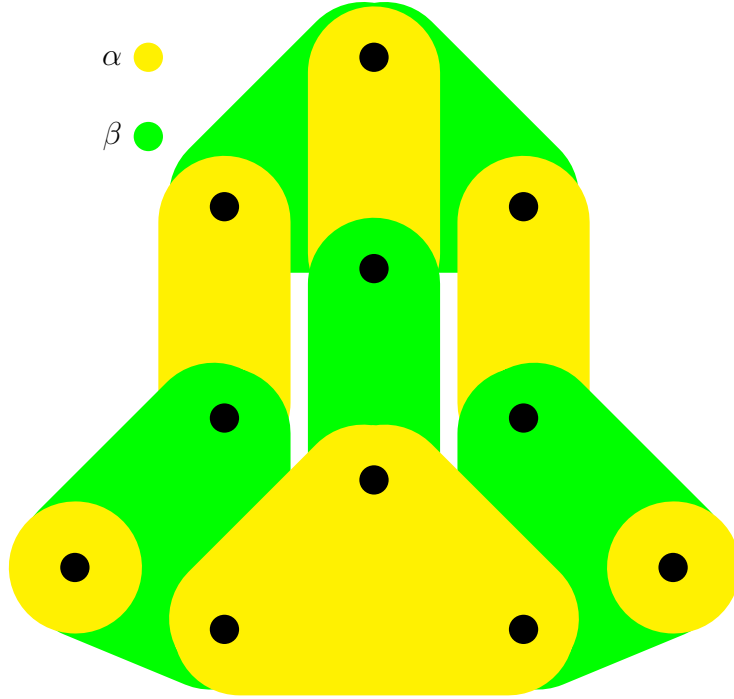

    \textbf{The case of $G_2$:} In this case, up to isomorphism there is a 
unique involution which is an inner involution. Its fixed points group 
is isomorphic to $PGL_2\times SL_2$. Its hyper-graph is given in Figure 
\ref{fig:G_2}. For this hyper-graph the function $s$ is well defined.
    
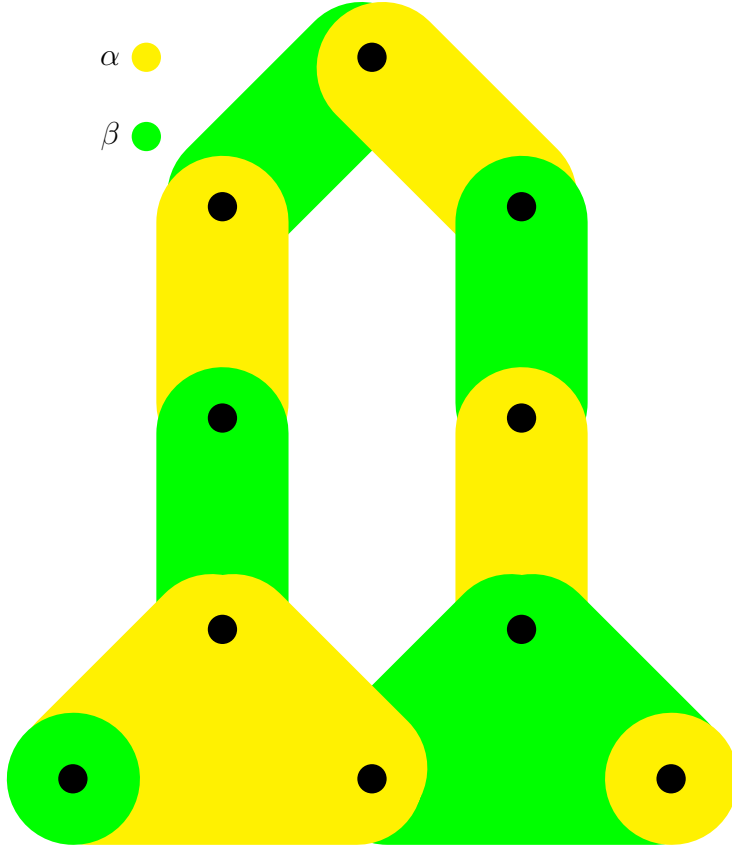
\begin{figure}[ht]
    \centering
    \begin{tikzpicture}
\node[vertex] (v1) {};
\node[vertex,below left of=v1] (v2) {};
\node[vertex,below right of=v1] (v3) {};
\node[vertex,below of=v2] (v4) {};
\node[vertex,below of=v3] (v5) {};
\node[vertex,below of=v4] (v6) {};
\node[vertex,below of=v5] (v7) {};
\node[vertex,below right of=v7] (v10) {};
\node[vertex,below right of=v6] (v11) {};
\node[vertex,below left of=v6] (v12) {};

\begin{pgfonlayer}{background}
\draw[edge,color=green] (v1) -- (v2);
\begin{scope}[transparency group,opacity=.5]
\draw[edge,opacity=1,color=yellow] (v1) -- (v3);
\end{scope}
\draw[edge,color=yellow] (v2) -- (v4);
\draw[edge,color=green] (v3) -- (v5);
\draw[edge,color=green] (v4) -- (v6);
\draw[edge,color=yellow] (v5) -- (v7);
\draw[edge,color=green] (v7) -- (v10)-- (v11) -- (v7);
\draw[edge,color=yellow] (v6) -- (v12)-- (v11) -- (v6);

\draw[edge,color=yellow] (v10) -- (v10);
\draw[edge,color=green] (v12) -- (v12);

\end{pgfonlayer}

\node[elabel,color=yellow,label=left:\(\alpha\)]  (e1) at (-3,0) {};
\node[elabel,below of=e1,color=green,label=left:\(\beta\)]  (e2) {};

\end{tikzpicture}

    \caption{The hyper-graph $\Gamma_{\Bar{F}}(G_2/PGL_2\times SL_2)$. 
$\alpha,\beta$ are the two simple roots of $G_2$.}
    \label{fig:G_2}
\end{figure}
    
\end{proof}

\end{section}

\begin{section}{Harmonic functions in the finite field case}\label{s6}
    In this Section we construct a non-zero harmonic function on 
$\Gamma_{F}(X)$ in the case of $F$ being a finite field, $X$ being quasi-split 
and $G,H$ both split over $F$. Our main tools are Proposition 
\ref{dimesnion} of the previous section and the following result of 
\cite{MR4328090}.

    \begin{theorem}[\cite{MR4328090}]\label{Hunnel}
        Assume that both $G$ and $H$ are split over $F$, the natural map 
$\pi:\Gamma_{F}(X)\rightarrow \Gamma_{\Bar{F}}(X)$ is surjective.
    \end{theorem}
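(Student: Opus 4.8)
I would prove the slightly stronger statement that every $B(\overline{F})$-orbit $\mathcal{O}$ on $X(\overline{F})$ is defined over $F$ and carries an $F$-rational point; this makes $\mathcal{O}$ a vertex of $\Gamma_F(X)$ lying above the given vertex of $\Gamma_{\overline{F}}(X)$, and compatibility with the hyper-edges labelled by $\Delta$ is then automatic since each $P_\alpha$ is an $F$-subgroup. The idea is to produce explicit $F$-rational orbit representatives via the Richardson--Springer normal form. As $G$ is $F$-split, fix an $F$-split maximal torus $T$ and an $F$-Borel $B \supseteq T$; since $\sigma$ is defined over $F$ and $H = \mathbf{G}^\sigma$ is $F$-split, after conjugating by an element of $G(F)$ we may assume $T$ contains a maximal $\sigma$-split torus and $\sigma$ acts on $(G,T,B)$ in the standard maximally split way — all of these normalisations can be carried out over $F$.

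In Richardson--Springer theory the $B(\overline{F})$-orbits on $X(\overline{F})$ are then labelled by certain twisted involutions in $W = N_G(T)/T$ together with a character datum on $T$, and every orbit has a representative of the shape $t\,\dot w\cdot x_0$ with $x_0 = eH \in X(F)$ the base point, $\dot w \in N_G(T)$, and $t \in T$. The key rationality inputs are: for an $F$-split group one can take Weyl representatives inside $G(F)$, e.g. $n_\alpha = u_\alpha(1)u_{-\alpha}(-1)u_\alpha(1) \in G(F)$ and products of these, and the torus datum is realised by some $t \in T(F)$ because $T$ is $F$-split. Hence $t\,\dot w\cdot x_0 \in X(F)$, so the orbit through it is defined over $F$ and has a rational point. (Equivalently, once one knows the orbit as a set is stable under $\mathrm{Gal}(\overline{F}/F)$ it descends to a locally closed $F$-subscheme by Galois descent, and the representative above exhibits an $F$-point of the descent.) One then checks by induction along the Bruhat order on orbits that these $F$-rational representatives exhaust all $B(\overline{F})$-orbits: the minimal (closed) orbits are covered by the base-point analysis above, and any larger orbit is obtained from a smaller one by applying a transition operator $m_\alpha$, $\alpha \in \Delta$, which is an $F$-rational construction (it only involves the $F$-subgroup $P_\alpha$), so it sends an $F$-rational representative to an $F$-rational representative.

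The main obstacle is the step just described, namely that the combinatorial Richardson--Springer classification of $B$-orbits is unchanged under base change from $F$ to $\overline{F}$ — equivalently that $\mathrm{Gal}(\overline{F}/F)$ acts trivially on the orbit set — and in particular locating all the closed orbits over $F$. This is exactly the point where the hypothesis that both $G$ \emph{and} $H$ are split over $F$ is essential: for non-split $H$ the Galois action on the relevant twisted involutions is in general nontrivial, and orbits (already the closed ones) may fail to be $F$-rational, so $\pi$ need not be surjective.
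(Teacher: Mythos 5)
A preliminary remark: the paper does not prove this statement at all — it is imported from \cite{MR4328090} (which is also invoked via its Corollary 5.17 in the proof of Proposition \ref{iso}) — so your proposal has to stand as a self-contained proof, and as such it has a genuine gap. The normal form on which you build is not correct: it is false in general that every $B(\overline{F})$-orbit on $X(\overline{F})$ has a representative of the shape $t\,\dot w\cdot x_0$ with $t\in T$, $\dot w\in N_G(T)$, $x_0=eH$. Already for $X=PGL_2/T$ (with $\sigma$ conjugation by a regular diagonal element), identifying $X$ with ordered pairs of distinct points of $\mathbb{P}^1$ and $x_0=(0,\infty)$, the set $T\,N_G(T)\cdot x_0$ is just $\{(0,\infty),(\infty,0)\}$, which meets only the two closed $B$-orbits; the open orbit contains no point of this form. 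In Richardson--Springer theory the representatives are elements $g$ with $g\sigma(g)^{-1}\in N_G(T)$, which need not lie in $N_G(T)T$, and the ``character datum'' attached to an orbit is not realized by translating the base point by an element of $T(F)$. So the explicit $F$-rational representatives you intend to propagate do not exist in the claimed form.

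More seriously, the crux of the theorem is exactly the step you set aside as ``the main obstacle'': that $\mathrm{Gal}(\overline{F}/F)$ acts trivially on the orbit set (equivalently, that each $B(\overline{F})$-orbit is defined over $F$), together with the rationality of the rank-one induction step. Neither is argued. Saying that the transition operator $m_\alpha$ ``is an $F$-rational construction'' does not produce an $F$-point of the larger orbit: passing from an $F$-point of $\mathcal{O}$ to the bigger orbit inside $P_\alpha\mathcal{O}$ requires solving equations in the rank-one Levi, whose solutions a priori live only over $\overline{F}$, and whether they can be taken in $F$ depends on which split or non-split tori occur in the stabilizers — this is precisely the rank-one analysis that the cited reference carries out (and that the paper uses again through Corollary 5.17 of \cite{MR4328090}; compare also Remark \ref{inequality}, which shows the fibers of $\pi$ are genuinely nontrivial). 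Note finally that over a finite field the ``$F$-point'' half is cheap once Galois-stability is known: a Galois-stable orbit is a homogeneous space under the connected group $B$ over $F$, so Lang's theorem gives a rational point. Thus the explicit-representative machinery is both unsound as set up and beside the point; what remains — triviality of the Galois action on orbits under the split hypotheses — is the actual content of the theorem, and your proposal asserts it rather than proves it.
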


    \begin{Remark}
        A result related to Theorem \ref{Hunnel} for general spherical varieties appears in Proposition 2.3.1
        of \cite{sakellaridis2021rank1}. 
    \end{Remark}

\begin{defn}
    Define a partial order on $\Gamma_{F}(X)$ by taking the minimal partial 
order such that for every two Borel orbits $x_1,x_2$ in the same hyper-edge,  
$x_1< x_2$ if $\overline{x_1}\subsetneq \overline{x_2}$. 
We refer to this order as the Bruhat order over $F$.
\end{defn}

    \begin{prop}\label{iso}
        Let $v$ be a dense $B$ orbit of $X$ over $F$. Consider 
$\Gamma^v_{F}(X)$, the sub hyper-graph of $\Gamma_{F}(X)$ consisting of all 
vertices smaller than $v$ in the Bruhat order. The natural map $\pi:\Gamma^v_{F}(X)\rightarrow 
\Gamma_{\Bar{F}}(X)$ is an isomorphism.
    \end{prop}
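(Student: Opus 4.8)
The plan is to exhibit $\pi \colon \Gamma^v_F(X) \to \Gamma_{\bar F}(X)$ as a bijection on vertices that respects hyperedges in both directions; surjectivity of the restricted map is the only real content, since the rest follows from general features of the Bruhat order and the rank-$1$ structure of hyperedges.

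First I would recall the setup: the $\bar F$-points of each $B$-orbit on $X$ form a single $\bar F$-orbit, and since $G$ and $H$ are split over $F$, each geometric $B$-orbit $O$ contains an $F$-rational point, hence corresponds to at least one vertex of $\Gamma_F(X)$; this is Theorem \ref{Hunnel}. The key extra input is that over a \emph{split} group the $F$-rational $B$-orbits inside a fixed geometric orbit $O$ are parametrized by a Galois cohomology set which, for $O$ the dense orbit, is trivial — so the dense geometric orbit has a \emph{unique} $F$-rational representative $v$, and the same triviality propagates downward: every geometric orbit $O' \le O$ in the Bruhat order also has a unique $F$-rational $B$-orbit lying below $v$. (Concretely: if $O' \le v$ then some wall-crossing $P_\alpha$-relation connects $v$ to an $F$-orbit mapping into $O'$, and the fiber of $\pi$ over $O'$ inside a single $P_\alpha$-orbit closure is controlled by the rank-$1$ picture of $PGL_2$, where over a split group the non-split possibilities that would produce multiple $F$-orbits in one geometric orbit do not occur.) This gives that $\pi$ restricted to $V(\Gamma^v_F(X))$ is injective, and surjective onto the geometric orbits $\le$ the image of $v$, which is all of $\Gamma_{\bar F}(X)$ since $v$ is dense.

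Next I would check the hyperedges match. A hyperedge of $\Gamma_F(X)$ with label $\alpha$ through an $F$-orbit $b$ consists of the $F$-rational $B$-orbits in the single $P_\alpha$-orbit $P_\alpha b$; correspondingly in $\Gamma_{\bar F}(X)$ the $\alpha$-hyperedge through $\pi(b)$ consists of the geometric $B$-orbits in $P_\alpha \pi(b)$. By the rank-$1$ classification of $B$-orbits in a $PGL_2$-spherical variety (types $G,T,U,N$; see \cite{knop}), a $P_\alpha$-orbit contains one or two $B$-orbits geometrically, and when two, over a split group both are $F$-rational; moreover all orbits in that $P_\alpha$-orbit other than possibly the top one lie strictly below it in the Bruhat order, so if the edge meets $\Gamma^v_F(X)$ at all then all of its vertices lie in $\Gamma^v_F(X)$. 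Hence $\pi$ carries $\alpha$-hyperedges of $\Gamma^v_F(X)$ bijectively onto $\alpha$-hyperedges of $\Gamma_{\bar F}(X)$, edge for edge, preserving sizes. Combined with the vertex bijection this shows $\pi \colon \Gamma^v_F(X) \to \Gamma_{\bar F}(X)$ is an isomorphism of labeled hypergraphs.

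The main obstacle is the rationality/uniqueness claim in the second paragraph — that inside each geometric orbit below $v$ there is exactly one $F$-rational $B$-orbit lying under $v$. Showing existence is Theorem \ref{Hunnel}; the delicate part is uniqueness, i.e.\ ruling out a second $F$-rational orbit in the same geometric orbit that is not comparable to $v$. I expect to handle this by propagating down from the dense orbit along minimal galleries in the Bruhat order, using at each wall-crossing that the relevant rank-$1$ datum is that of a split $PGL_2$ (so the "type $N$ with nontrivial twist" phenomenon, which is what creates extra $F$-orbits in the non-split setting, is absent), exactly as in the reduction to rank $2$ and to split groups already carried out in the proof of Proposition \ref{support function}.
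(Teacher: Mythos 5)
There is a genuine gap: your injectivity argument rests on the claim that, because $G$ and $H$ are split, each geometric $B$-orbit (in particular the dense one) contains a \emph{unique} $F$-rational $B$-orbit, the relevant Galois cohomology being trivial. This is false even in the split setting, because the stabilizer of a point in a $B$-orbit can be disconnected, so the obstruction set is typically nontrivial (e.g.\ $F^\times/(F^\times)^2$). The paper's own Remark \ref{inequality} gives the counterexample $SL_2/\mathbb{G}_m$, where $\dim\mathcal{H}(\Gamma_F(X))=3$ exceeds the number of full closed geometric orbits, which is impossible if $\pi:\Gamma_F(X)\rightarrow\Gamma_{\bar F}(X)$ were injective; concretely the dense geometric orbit there splits into several rational $B$-orbits, so there are several choices of $v$, and your ``unique $F$-rational representative'' premise fails. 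The same phenomenon is exactly the ``doubling'' of type $N$ and $T$ vertices for split tori recorded in the paper (cf.\ the operations quoted from proposition 7.5 of \cite{my} and the computation in the proof of Proposition \ref{pull back harmonic}, where several dense rational orbits in one $P_\alpha$-orbit map to a single geometric orbit). So the ``non-split possibilities do not occur over a split group'' step, which carries all the weight of your uniqueness claim, does not hold, and with it the hyperedge-size matching you assert also breaks down.

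The paper's proof avoids global uniqueness altogether: it inducts along the Bruhat order starting from the \emph{chosen} dense rational orbit $v$, and at each step shows there is exactly one rational orbit $y'$ below $v$, adjacent to the current $x$ by the given label $\alpha$, mapping to a prescribed geometric neighbour $y$. Existence comes from descent inside the rank-one Levi of $P_\alpha$ (corollary 5.17 of \cite{MR4328090}, refining Theorem \ref{Hunnel}); uniqueness \emph{within} a single hyperedge comes from the $PGL_2$ classification (\cite{knop}): $\pi$ is injective on the closed orbits of a rank-one spherical variety; and, crucially, one must also rule out $\pi$ gluing two \emph{different} hyperedges of $\Gamma^v_F(X)$, which the paper gets from the rationality of Knop's action (proposition 3.6.1 of \cite{unramified}). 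Your proposal is missing this last ingredient entirely, and its substitute for the injectivity step is the incorrect cohomological triviality claim; the surjectivity/existence part of your argument, via Theorem \ref{Hunnel} and reduction to rank one, is fine and agrees with the paper. To repair the proposal you would need to replace the $H^1$-triviality argument by an argument that works \emph{relative to the fixed $v$}, i.e.\ precisely the induction with the non-gluing statement above.
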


    Before proving that we need another result.

    \begin{prop}\label{An orbit above exists}
        Let $x\in \Gamma^v_{F}(X)$ and let $\alpha\in \Delta$ be a simple root such that 
        the hyper-edge labeled by $\alpha$ that contains $\pi(x)$ also contains an orbit in
  $\Gamma_{\Bar{F}}(X)$ larger than $\pi(x)$. We claim that we
  can find $y\in \Gamma^v_{F}(X)$ larger than $x$ 
  such that $y$ and $x$ are in a hyper-edge labeled by $\alpha$. 
    \end{prop}

    \begin{proof}
        We prove this by induction on the Bruhat order.

  We know that there there is a simple root $\beta$ 
  such that the hyper-edge labeled by $\beta$ that contains 
  $x$ also contains an orbit in $\Gamma^v_{F}(X)$ larger than $x$.

  Consider $\Gamma_{\alpha,\beta}$, the sub hyper-graph of $\Gamma_{F}(X)$ containing
  hyper-edges labeled by either $\alpha$ or $\beta$ and all the vertices that are connected to 
  $x$ by such hyper-edges.
  Like in the proof of Proposition \ref{support function}, $\Gamma_{\alpha,\beta}$ is 
  the hyper-graph of a spherical variety $X'$ of a group of semi-simple rank 2. Either $\Gamma_{\alpha,\beta}$
  is one of the hyper-graphs described by Proposition \ref{explicit description of hyper-graphs}
  or $X'$ is a symmetric space and $\Gamma_{\Bar{F}}(X')$ is one of the hyper-graphs described
   in the proof of Proposition \ref{support function}.
  In both cases, it is not difficult to check that every maximal vertex of $\Gamma_{\alpha,\beta}$
  that sits above $x$ also sits above some vertex larger than $x$ in the
  hyper-edge labeled by $\alpha$ that contains $x$. By induction we know that one of these maximal vertices 
  is in $\Gamma^v_{F}(X)$, thus we can find $y$ as desired.
    \end{proof}

    \begin{proof}{Proof of Proposition \ref{iso}}
        We prove the result by induction on the Bruhat order. In both hyper-graphs there
        is a unique maximal orbit, this is the basis of the induction. 
         
        Assume we have an orbit $x\in \Gamma^v_{F}(X)$ and we 
have a simple root $\alpha$ and $y\in \Gamma_{\Bar{F}}(X)$ such that $y$ and 
$\pi(x)$ are in a hyper-edge labeled by $\alpha$. For surjectivity, it is enough to show that 
there is an orbit $y'\in \Gamma^v_{F}(X)$ such that $y',x$ are in a 
hyper-edge labeled by $\alpha$ and $\pi(y')=y$.

Let $g\in G$ such that $x=BgH$ and let $T$ be a maximal split torus of $G$ 
such that $g^{-1}Tg$ is $\sigma$ stable. Consider $\alpha:T\rightarrow \mathbb{G}_m$ as a root of $T$.
Let $T_\alpha=Ker(\alpha)$ ane let $M_\alpha=Z_G(g^{-1}T_\alpha g)$.
The Borel orbits on the spherical variety $M_\alpha/M_\alpha\cap H$
 correspond to the vertices in the hyper-edge labeled by $\alpha$ that contains $x$.

Corollary 5.17 of \cite{MR4328090} says that the map $\pi$ restricted to $M_\alpha$
is surjective. Thus, we can find $y$ as desired and surjectivity follows.

Now, we prove injectivity of $\pi$. Let $x_1,x_2\in \Gamma^v_{F}(X)$
 and assume that $\pi(x_1)=\pi(x_2)$.
We can assume that neither of $x_1,x_2$ is the maximal orbit in $\Gamma^v_{F}(X)$.
 Let $\alpha\in \Delta$
be a simple root such that the hyper-edge labeled by $\alpha$ that contains $\pi(x_1)$ also
contains an orbit in $\Gamma_{\Bar{F}}(X)$ larger than $\pi(x_1)$. 

By Proposition \ref{An orbit above exists} we can find $y_1,y_2\in \Gamma^v_{F}(X)$
 larger than $x_1,x_2$ respectively, such that $y_1$ and $x_1$ are 
in a hyper-edge labeled by $\alpha$ and so are $y_2$ and $x_2$. 

  In this situation we have $\pi(y_1)=\pi(y_2)$, by induction $y_1=y_2$. 
  Thus, the hyper-edge containing $x_1$ and $y_1$ is the same hyper-edge
    containing $y_2$ and $x_2$.

It is enough to show that the restriction of $\pi$ to a single hyper-edge is injective on the 
set of orbits of minimal dimension in the hyper-edge. This holds because the vertices in the
hyper-edge correspond to Borel orbits on a spherical variety of $PGL_2$. 
And for any such variety passing to the algebraic closer is injective on closed 
orbits (see \cite{knop} for the description of these varieties and their Borel 
orbits).

     \end{proof}

\begin{defn}
    Let $x$ be a vertex of $\Gamma_{F}(X)$, denote by $m(x)$ the number of 
dense orbits above it in the Bruhat order.

    For any $\phi:\Gamma_{\Bar{F}}(X)\rightarrow\mathbb{C}$, we define 
$\pi^!(\phi):\Gamma_{F}(X)\rightarrow\mathbb{C}$ by 
$\pi^!(\phi)(x)=m(x)\phi(\pi(x))$.
\end{defn}

    \begin{prop}\label{pull back harmonic}
        For $\phi\in \mathcal{H}(\Gamma_{\Bar{F}}(X))$ we have $\pi^!(\phi)\in 
\mathcal{H}(\Gamma_{F}(X))$.
    \end{prop}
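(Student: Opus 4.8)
The plan is to verify the harmonicity condition $\sum_{e \in E} \pi^!(\phi)(e) = 0$ for each hyper edge $E$ of $\Gamma_F(X)$, using the structure of hyper edges as Borel orbits on spherical varieties of groups of semisimple rank $1$ (ultimately of $PGL_2$) together with the surjectivity and rationality results already established. Fix a hyper edge $E$ of $\Gamma_F(X)$ labeled by some $\alpha \in \Delta$. By the discussion in the proof of Proposition \ref{iso}, $E$ consists of the $B$-orbits (over $F$) lying over a single hyper edge $\bar E$ of $\Gamma_{\Bar F}(X)$ labeled by $\alpha$, and the map $\pi$ restricts to a surjection $E \to \bar E$ by Theorem \ref{Hunnel} applied to the relevant rank-$1$ subgroup (equivalently, $\bar E$ corresponds to the Borel orbits on a spherical variety of $PGL_2$). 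I would first record which configurations of Borel orbits can occur for a spherical variety of $PGL_2$ (from \cite{knop}): either two orbits with $\bar E = \{x_0, x_1\}$, $\overline{x_0} \subsetneq \overline{x_1}$, or a single orbit (type $U$), or the group/torus cases. The only nontrivial case to check is the two-orbit case, so harmonicity on $\bar E$ for $\phi$ reads $\phi(\pi(x_0)) + \phi(\pi(x_1)) = 0$ where $\pi(x_i)$ are the two vertices of $\bar E$.

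Next I would analyze the fibers of $\pi|_E$. Write $\bar E = \{v_0, v_1\}$ with $v_0 < v_1$ in the Bruhat order, and let $E_i = (\pi|_E)^{-1}(v_i)$. The content of $\pi^!$ is the multiplicity function $m(x)$ counting dense orbits above $x$. The key combinatorial claim I would establish is:
\[
\sum_{x \in E_0} m(x) \;=\; \sum_{x \in E_1} m(x).
\]
Granting this, $\sum_{x \in E} \pi^!(\phi)(x) = \left(\sum_{x\in E_0} m(x)\right)\phi(v_0) + \left(\sum_{x \in E_1}m(x)\right)\phi(v_1) = \left(\sum_{x\in E_0}m(x)\right)\big(\phi(v_0)+\phi(v_1)\big) = 0$ by harmonicity of $\phi$. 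To prove the combinatorial claim I would set up a bijection between dense orbits above elements of $E_0$ and dense orbits above elements of $E_1$, realized edge-by-edge: each dense orbit $v$ above $E$ sits above via a chain of hyper edges, and at the ``bottom'' hyper edge $E$ itself the rank-$1$ rationality (Knop's action is rational over $F$, Proposition 3.6.1 of \cite{unramified}, together with Corollary 5.17 of \cite{MR4328090}) forces the orbits of $E_0$ and $E_1$ lying under a fixed chain to be matched up so that the counts agree. Concretely, the hyper edge $E$, viewed as Borel orbits on the rank-$1$ spherical variety over $F$, has its orbits partitioned by their image in $\Gamma_{\Bar F}(X)$; in the $F$-rational two-orbit situation each orbit over $v_1$ is covered by exactly the same number of orbits over $v_0$ as one moves up, so summing $m$ over the fiber is constant across $\bar E$.

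The main obstacle I expect is precisely this last bijection: one must show that the ``covering multiplicities'' of hyper edges above $E$ are symmetric between the two vertices $v_0, v_1$ of $\bar E$. I would handle this by induction on the Bruhat order from the top, using Proposition \ref{iso} (the sub hyper graph below a dense orbit maps isomorphically to $\Gamma_{\Bar F}(X)$): for each dense orbit $v$ of $\Gamma_F(X)$, the orbits of $E$ lying below $v$ form exactly one copy of $\bar E$ (by the isomorphism of Proposition \ref{iso}), hence contribute exactly one element to $E_0$ and one to $E_1$; therefore $m$ restricted to $E_0 \cup E_1$ simply counts, for each $x$, how many dense $v$ have their copy of $\bar E$ passing through $x$, and summing over $E_0$ counts each dense $v$ above $E$ once, as does summing over $E_1$. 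This gives $\sum_{x\in E_0} m(x) = \#\{\text{dense orbits above } E\} = \sum_{x\in E_1}m(x)$, completing the proof. The remaining cases (single-orbit hyper edge, or $\bar E$ degenerate) are trivial since then the harmonicity condition on $\phi$ is vacuous or the edge contributes a single term that must vanish.
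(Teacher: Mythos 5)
Your proposal follows essentially the same route as the paper's proof: reduce harmonicity of $\pi^!(\phi)$ on a hyper edge to showing that the sum of $m$ over each fiber of $\pi$ restricted to that edge is independent of the chosen vertex of the image edge, and prove this by counting, via Proposition \ref{iso}, that each dense orbit above the edge carries exactly one copy of the image edge and hence contributes exactly once to every fiber sum. The only cosmetic slip is your claim that the image edge over $\Bar{F}$ has at most two vertices (a type $T$ edge has three: two closed orbits and one dense), but your counting argument applies verbatim in that case, exactly as in the paper.
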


    \begin{proof}
    Let $B$ be a Borel subgroup of $G$.
        Let $\s$ be a $B$ orbit on $X$ over $F$. Let $\alpha\in\Delta$ be a 
simple root and let $P_\alpha$ be the corresponding standard parabolic that contains 
$B$. Write $P_\alpha\s=\s_1\cup \s_1...\cup \s_k$ as a union of $B$ orbits. We 
want to show that $\sum^k_{i=1} m(\s_i)\phi(\pi(\s_i))=0$. It is enough to show 
that for every orbit $\s_l$, $\sum_{\pi(\s_i)=\pi(\s_l)} m(\s_i)$ does not 
depend on $\s_l$. From calculating what happens in the case of $G=PGL_2$, we 
know that if we restrict $\pi$ to $\s_1,...,\s_k$ then it is injective on the 
subset of orbits closed in $P_\alpha \s$ and it maps all orbits dense in 
$P_\alpha \s $ to a single orbit.
        
        Let $v$ be a dense $B$ orbit of $X$, that is bigger (in the Bruhat 
        order) than one of the orbits $\s_1,...,\s_k$.
         By Proposition \ref{iso} $\pi$ is an isomorphism when 
restricted to $\Gamma^v_F(X)$, thus $v$ is bigger than all the closed orbits in 
$P_\alpha\s$ and it is bigger than exactly one dense orbit in $P_\alpha\s$. 
Therefore, if $\s_l$ is a closed orbit in $P_\alpha\s$ then $v$ is counted once 
in $\sum_{\pi(\s_i)=\pi(\s_l)} m(\s_i)=m(\s_l)$. If $\s_l$ is dense in 
$P_\alpha \s$, then $v$ is also counted once in $\sum_{\pi(\s_i)=\pi(\s_l)} m(\s_i)$ 
as it contains only one of the dense orbits. Thus, in any case $v$ is counted 
once and therefore the sum does not depend on $\s_l$.  
    \end{proof}

    \begin{cor}\label{cor dimension closed orbits}
        The dimension $dim\mathcal{H}(\Gamma_{F}(X))$ is larger or equal to the 
number of full closed Borel orbits in $\Gamma_{\Bar{F}}(X)$.
    \end{cor}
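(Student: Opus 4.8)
The plan is to produce an injective linear map $\mathcal{H}(\Gamma_{\Bar{F}}(X)) \hookrightarrow \mathcal{H}(\Gamma_{F}(X))$ and then read off the inequality from Proposition \ref{dimesnion}. The obvious candidate for this map is the pullback $\phi \mapsto \pi^!(\phi)$: it is manifestly linear, and Proposition \ref{pull back harmonic} already guarantees that it sends $\mathcal{H}(\Gamma_{\Bar{F}}(X))$ into $\mathcal{H}(\Gamma_{F}(X))$. So the entire content of the corollary is reduced to checking that $\pi^!$ is injective.

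To prove injectivity, I would take $\phi \in \mathcal{H}(\Gamma_{\Bar{F}}(X))$ with $\pi^!(\phi) = 0$ and show $\phi(\bar{x}) = 0$ for every vertex $\bar{x}$ of $\Gamma_{\Bar{F}}(X)$. Since $X$ is irreducible, its dense $B$-orbit is Galois-stable, and by Theorem \ref{Hunnel} (surjectivity of $\pi$) there is a dense $B$-orbit $v$ of $X$ defined over $F$; it is in fact the unique one, since any $F$-rational dense $B$-orbit base-changes to the dense orbit over $\Bar{F}$. By Proposition \ref{iso}, $\pi$ restricts to an isomorphism $\Gamma^v_{F}(X) \xrightarrow{\ \sim\ } \Gamma_{\Bar{F}}(X)$, so there is a (unique) vertex $x$ of $\Gamma^v_{F}(X)$ with $\pi(x) = \bar{x}$. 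Since $x$ lies below the dense orbit $v$ in the partial order on $\Gamma_F(X)$, the integer $m(x)$ counting dense orbits above $x$ satisfies $m(x) \geq 1$. Hence $0 = \pi^!(\phi)(x) = m(x)\,\phi(\pi(x)) = m(x)\,\phi(\bar{x})$ forces $\phi(\bar{x}) = 0$. As $\bar{x}$ was arbitrary, $\phi = 0$, so $\pi^!$ is injective.

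Combining, $\dim \mathcal{H}(\Gamma_{F}(X)) \geq \dim \mathcal{H}(\Gamma_{\Bar{F}}(X))$, and the right-hand side equals the number of full closed Borel orbits in $\Gamma_{\Bar{F}}(X)$ by Proposition \ref{dimesnion}. I do not expect any real obstacle here: the two nontrivial ingredients — the existence of an $F$-rational dense $B$-orbit and the fact that $\pi$ is an isomorphism over it (Proposition \ref{iso}) — have already been established, and the only mild point to keep in mind is that $m(x)$ may be strictly larger than $1$, which is harmless since it is a positive integer and we only need $m(x)\phi(\bar x)=0 \Rightarrow \phi(\bar x)=0$.
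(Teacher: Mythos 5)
Your proposal is correct and is essentially the paper's own argument: the paper's proof simply cites Propositions \ref{dimesnion} and \ref{pull back harmonic}, leaving the injectivity of $\pi^!$ implicit, and you supply exactly that missing step using Theorem \ref{Hunnel} and Proposition \ref{iso}, which are the ingredients the paper itself has set up. One small caveat: your aside that the $F$-rational dense $B$-orbit is unique is unjustified (the dense orbit over $\Bar{F}$ may split into several $B$-orbits over $F$, as the $SL_2/\mathbb{G}_m$ example of Remark \ref{inequality} suggests), but this plays no role in your argument, which only needs the existence of one such orbit $v$ and that $m(x)\geq 1$ for $x$ in $\Gamma^v_F(X)$.
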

    \begin{proof}
        This follows immediately from Propositions \ref{dimesnion} and \ref{pull back harmonic}. 
    \end{proof}

    \begin{Remark}\label{inequality}
        There are case where equality does not hold. For example in the case of
$X=SL_2/\mathbb{G}_m$. In this case $dim\mathcal{H}(\Gamma_{F}(X))=3$ but there 
are only two full closed orbits in $\Gamma_{\Bar{F}}(X)$. 
    \end{Remark}

    As a result we obtain the following:
    \begin{theorem}
        Let $F$ be a finite field, assume that both $G$ and $H$ are split over 
$F$, then $X$ is quasi-split if and only if $St$ is $H$ distinguished. 
    \end{theorem}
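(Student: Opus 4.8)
The plan is to use Proposition~\ref{harmonic finite} to pass to a combinatorial reformulation: since $\dim\mathrm{Hom}_H(St,1)=\dim\mathcal{H}(\Gamma_F(X))$, it suffices to prove that $X$ is quasi split if and only if $\mathcal{H}(\Gamma_F(X))\neq 0$. I would then prove the two implications separately, each time assembling results already established above.

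\textbf{Quasi split $\Rightarrow$ $St$ distinguished.} Suppose $X$ is quasi split. By Prasad's Proposition~\ref{springer} there is at least one closed Borel orbit that is full in $\Gamma_{\Bar{F}}(X)$. By the corollary stated just before the theorem --- which combines Proposition~\ref{dimesnion}, the surjectivity Theorem~\ref{Hunnel}, the local isomorphism Proposition~\ref{iso}, and the harmonicity of the pullback $\pi^!$ from Proposition~\ref{pull back harmonic} --- the quantity $\dim\mathcal{H}(\Gamma_F(X))$ is at least the number of full closed Borel orbits in $\Gamma_{\Bar{F}}(X)$, hence $\geq 1$. Therefore $\mathcal{H}(\Gamma_F(X))\neq 0$, and by Proposition~\ref{harmonic finite} the representation $St$ is $H$-distinguished. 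Explicitly, a nonzero harmonic function is $\pi^!(f\circ s)$, where $s$ is the support map of Proposition~\ref{support function} and $f\colon\Z[S]\to\mathbb{C}$ is any linear functional nonvanishing on a fixed full closed orbit.

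\textbf{$St$ distinguished $\Rightarrow$ $X$ quasi split.} Equivalently, if $X$ is not quasi split then $\mathcal{H}(\Gamma_F(X))=0$; this is the finite-field analogue of Theorem~\ref{easy direction}. I would prove it by transcribing the argument of Section~\ref{s4} with the spherical building of $G$ in place of the affine building: from the finite-field version of the proposition asserting that non-quasi-splitness forces every $\sigma$-stable maximal torus to carry a root $\alpha$ with $u_{\pm\alpha}(F)\subset H$, any $\phi\in\mathcal{H}(\Gamma_F(X))$ must vanish on the $\alpha$-adjacent Borel orbits (the $\alpha$-labelled hyper edge through such an orbit has size one), and one then propagates this vanishing along galleries inside the $\sigma$-stable apartments, using the spherical analogue of Proposition~\ref{apartment order} to order $\sigma$-stable apartments and Knop's classification of rank $\leq 2$ hyper edges for the inductive step. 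Alternatively, this direction follows immediately from the genericity of $St$ together with Prasad's theorem \cite{prasad2019generic} that a generic representation can be $H$-distinguished only if $X$ is quasi split.

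\textbf{Expected main obstacle.} Given Sections~\ref{s5} and~\ref{s6}, the positive direction is essentially bookkeeping; the delicate point is the converse, namely transferring the necessity argument of Section~\ref{s4} --- phrased there entirely in terms of the affine building, the partial order on $\sigma$-stable apartments, and minimal $l_\sigma$-length chambers through a wall --- to the spherical setting, in particular finding the spherical counterpart of the uniqueness of the $H$-orbit of $\sigma$-stable apartments through such a chamber. If that adaptation proves subtle, invoking \cite{prasad2019generic} via the genericity of $St$ bypasses it entirely, at the cost of a less self-contained argument.
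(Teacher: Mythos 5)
Your proposal matches the paper's argument: the paper proves the sufficiency direction exactly by combining Proposition~\ref{harmonic finite}, Proposition~\ref{springer}, and the corollary built from Theorem~\ref{Hunnel} and Propositions~\ref{dimesnion}, \ref{iso}, \ref{pull back harmonic}, and it handles necessity by the same two routes you name (adapting the Section~\ref{s4} building argument "with slight modifications" to the finite-field/spherical setting, with the genericity-plus-\cite{prasad2019generic} argument noted as an alternative). So the proposal is correct and takes essentially the same approach as the paper.
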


    \begin{proof}
        The fact that $X$ being distinguished is a necessary condition can be 
proven like in Section \ref{s4}. The other direction follows from 
Corollary \ref{cor dimension closed orbits}, Proposition \ref{springer}, and proposition \ref{harmonic finite}.
    \end{proof}
\end{section}

\begin{section}{Harmonic functions in the non archimedean local field 
case}\label{s7}
Let $F$ be a non archimedean local field. Let $\s$ be the ring of integers of $F$ 
and let $k$ be its residue field.

In this Section we construct a non-zero harmonic function on 
$\Gamma^0_{aff,F}(X)$ under the assumptions that both $G$ and $H$ are split over $\s$, 
$\sigma$ is defined over $\s$, and $X$ is quasi-split, excluding some specific 
cases.

Our construction proceeds in two independent steps. First, like in 
Proposition \ref{support function} we construct a function $s$, this time on 
$H$ orbits of chambers in $\B$ to $\Z[S]$. Here, $S$ is the 
set of full orbits of $l_\sigma$ length zero in $\Gamma^0_{aff,F}(X)$. Notice
that orbits of $l_\sigma$ length zero are just orbits of chambers $\C$ such that $\sigma(\C)=\C$.

In the second step, we show that under our assumptions there are full orbits of $l_\sigma$ length 
zero, excluding in some specific cases which we analyze separately. 

We begin with the first step.

\begin{defn}
    Recall that the vertices of $\Gamma^0_{aff,F}(X)$ can be identified with 
$H$ orbits of chambers in $\B$, and recall the length function $l_\sigma$ on 
them, defined as $l_\sigma(\C)=d(\C,\sigma(\C))$. Denote by $S$ the set of chambers of 
$l_\sigma$ length zero that are full (with respect to $\Tilde{\Delta}$).
\end{defn}

\begin{defn}
    We define an equivalence relation $\sim$ on $\Gamma^0_{aff,F}(X)$.
    It is the minimal equivalence relation such that if $v_1,v_2$ are two vertices in the same
    hyper-edge of $\Gamma^0_{aff,F}(X)$ and $l_\sigma(v_1)=l_\sigma(v_2)$ is the
    maximal $l_\sigma$ length in this hyper-edge then $v_1\sim v_2$. 
    And if $v_1\sim v_2$ then for every simple reflection $\alpha\in \Tilde{\Delta}$ the
    hyper-edges labeled by $\alpha$ that contain $v_1$ and $v_2$ are equivalent.
    
    If two orbits are $\sim$ equivalent then they are identified over the algebraic closure of $F$. 
    
\end{defn}

\begin{prop}\label{affine support function}
    There exists a unique function $s:\Gamma^0_{aff,F}(X)\rightarrow \Z[S]$ such that:
    \begin{enumerate}
        \item For any $x\in S$ we have $s(x)=x$.
        \item For any $E$, a hyper-edge of $\Gamma^0_{aff,F}(X)$, we have $\sum_{v\in 
E}s(v)=0$.
        \item If $v_1\sim v_2$ then $s(v_1)=s(v_2)$.
    \end{enumerate}
\end{prop}

\begin{proof}
    Uniqueness is clear by induction on the $l_\sigma$ length.

    We construct $s$ by induction on the $l_\sigma$ length of a chamber. On full 
chambers of $l_\sigma$ length zero it is defined by the first property, on non full 
chambers of $l_\sigma$ length zero we define $s$ to be zero. 

    Now, let $\C$ be a chamber in $\B$ and let 
$\alpha,\beta\in\Tilde{\Delta}$ be two simple reflections such that each of the 
hyper-edges labeled $\alpha,\beta$ that contains $\C$, does not contain a 
chamber whose $l_\sigma$ length is larger than $l_\sigma(\C)$. Like in the proof of 
Proposition \ref{support function} it is enough to verify the result for the 
maximal sub hyper-graph that contains $H\C$ and has hyper-edges only with 
labels $\alpha$ or $\beta$. We denote this hyper-graph by 
$\Gamma_{\alpha,\beta}$.

    Each of $\alpha,\beta$ corresponds to a codimension 1 facet of $\C$, denote 
them by $f_\alpha$ and $f_\beta$ respectively, we may assume $f_\alpha\neq f_\beta$. Consider 
$f_{\alpha,\beta}=f_\alpha\cap f_\beta$. 
    
    There a few options.
\begin{enumerate}
    \item $f_{\alpha,\beta}=\emptyset$.
    \item $f_{\alpha,\beta}\neq\emptyset$, and $f_{\alpha,\beta}$ does not
     span a $\sigma$ stable hyper plane.
    \item $f_{\alpha,\beta}\neq\emptyset$, and $f_{\alpha,\beta}$ spans a $\sigma$ stable hyper plane.
\end{enumerate}

We deal with each case separately. 

\begin{enumerate}
    \item  This case happen if and only if $f_\alpha$ and $f_\beta$ are parallel.
    
        Let $\C_\alpha$ and $\C_\beta$ be the chambers in $\A$ that are different from $\C$
        and contain $f_\alpha$ and $f_\beta$ respectively. By our assumption 
        $l_\sigma(\C_\alpha)\leq l_\sigma(\C)$ and also $l_\sigma(\C_\beta)\leq 
       l_\sigma(\C)$. A sharp inequity for one of the chambers contradicts the other inequity,
        because if moving in one direction decreases $l_\sigma$ then moving in the other 
        direction must increase it. 
        Thus we must have $l_\sigma(\C)=l_\sigma(\C_\alpha)=l_\sigma(\C_\beta)$. 
        
        This implies that the walls containing $f_\alpha$ and $f_\beta$ are $\sigma$ stable. 
        Either $\sigma$ switches the half planes on the two side of $f_\alpha$ or it preserves
        them. It must preserve them because it can not switch both the half planes on the two sides
        of $f_\alpha$ and the half planes on the two sides of $f_\beta$ as $f_\alpha,f_\beta$ are 
        parallel. Remark 7.10 of \cite{my} says that in this situation the hyper edge 
        labeled by $\alpha$ that contains $\C$ either contains a single vertex
        or contains a vertex with $l_\sigma$ length larger than $l_\sigma(\C)$.
        
        By our assumption it means that the hyper-graph $\Gamma_{\alpha,\beta}$
        contains a single vertex and there is nothing to prove.

            \item In this case $\Gamma_{\alpha,\beta}$ is described by Proposition 
    \ref{explicit description of hyper-graphs} below. Checking the result for
    these hyper-graphs is easy.

    \item In this case, $\Gamma_{\alpha,\beta}$, is of the form 
$\Gamma_{k}(X')$ for some symmetric space $X'$ over a group $G'$ of semi-simple rank 2 (see 
Section 6 of \cite{my}). We know that $\Gamma_{\Bar{k}}(X')$ is one of the 
hyper-graphs discussed in the proof of Proposition \ref{support function}. In particular 
we know the result for $\Gamma_{\Bar{k}}(X')$ and we need to verify it for $\Gamma_{k}(X')$.

All the Borel orbits of the space $X'$ are defined over $k$ because $H$ is split and all
$I$ orbits of $X$ are defined over $F$. Thus we can apply the same ideas of Section \ref{s6}.

The map $\pi:\Gamma_{k}(X')\rightarrow \Gamma_{\Bar{k}}(X')$ is surjective.
It is also injective on the set of closed orbits as the stabilizer of a
closed orbit is a Borel subgroup of $H$.

For every dense orbit $v$ in $\Gamma_{\Bar{k}}(X')$,
 the map $\pi$ is an isomorphism when restricted to $\Gamma^v_{k}(X')$.
 
 This allows us to pull the function $s$ on $\Gamma_{\Bar{k}}(X')$ back to $\Gamma_{k}(X')$ 
   (like in Proposition \ref{pull back harmonic}) and verify the result for $\Gamma_{k}(X')$. 
    \end{enumerate}
\end{proof}

\begin{prop}\label{explicit description of hyper-graphs}
    Let $\C$ be a chamber in $\B$ and let $\alpha,\beta\in \Tilde{\Delta}$ be two
     simple reflections such that each of the hyper-edges labeled $\alpha$ or $\beta$
    that contains $\C$, does not contain a chamber whose $l_\sigma$ length is 
    larger than $l_\sigma(\C)$. 
    
    Let $\Gamma_{\alpha,\beta}$ be the maximal sub hyper-graph of $\Gamma^0_{aff,F}(X)$ that contains 
    $H\C$ and has hyper-edges only with labels $\alpha$ or $\beta$.
    
    Let $f_\alpha$ and $f_\beta$ be the facets of 
    $\C$ that correspond to $\alpha$ and $\beta$ respectively. 
    Assume that $f_\alpha\neq f_\beta$.
    Also assume that $f_\alpha\cap f_\beta$ is non empty and does not
     span a $\sigma$ stable affine hyper plane.
    
    Then, $\Gamma_{\alpha,\beta}$ is either a path graph, a cycle graph on an even 
    number of vertices, or a hyper-graph of the form shown in Figure \ref{fig:box hyper-graph}.
    In the latter case, $\Gamma_{\alpha,\beta}$ contains exactly two edges of size larger
    than 2 and it can be split to two graphs which are either path graphs or 
    cycle graphs on an even number of vertices. 
    The vertices of minimal $l_\sigma$ length in $\Gamma_{\alpha,\beta}$
     are contained in one of the two hyper-edges of size larger than 2. 
\end{prop}

\begin{proof}
     Consider 
$f_{\alpha,\beta}=f_\alpha\cap f_\beta$. 
    Let $\A$ be a $\sigma$ stable apartment that contains $\C$ and let
     $W_{\alpha,\beta}$ be the set of all walls in $\A$ that contain 
$f_{\alpha,\beta}$.

There are two cases:
\begin{enumerate}
    \item $W_{\alpha,\beta}$ does not contain a $\sigma$ stable wall.
    \item $W_{\alpha,\beta}$ does contain a $\sigma$ stable wall.
\end{enumerate}

We deal with each case separately. 
\begin{enumerate}

\item  Let $\C'\subset \A$ be the reflection of $\C$ across 
$f_{\alpha,\beta}$. There are no $\sigma$ stable walls that contain 
$f_{\alpha,\beta}$, thus $\Gamma_{\alpha,\beta}$ is a graph (see Lemma 5.5 of 
\cite{my}). This graph is a cycle with an even number of vertices, the edges are
alternating between edges labeled by $\alpha$ and edges labeled by $\beta$. All the 
vertices are $H$ orbits of chambers in $\A$ that contain $f_{\alpha,\beta}$.

    \item In this case $W_{\alpha,\beta}$ can contain only a 
single wall of $\A$ that is $\sigma$ stable. Otherwise, their intersection 
is a $\sigma$ stable affine hyper plane spanned by $f_{\alpha,\beta}$.
    
Let $\C_\alpha$ and $\C_\beta$ be the chambers in $\A$ that are different from $\C$
and contain $f_\alpha$ and $f_\beta$ respectively.

    First, we claim that one of $f_\alpha,f_\beta$ must lay in the unique 
$\sigma$ stable wall in $W_{\alpha,\beta}$. 
Assume the contrary, then $l_\sigma(\C_\alpha)< l_\sigma(\C)$ and $l_\sigma(\C_\beta)< l_\sigma(\C)$. 
Therefore, for any point $x$ in the interior of $\C$, that is close enough to $f_\alpha$, the 
line connecting $x$ and $\sigma(x)$ must intersect $f_\alpha$. The same is true for $f_\beta$. 
    
    Thus, by considering points in $\C$ close enough to $f_{\alpha,\beta}$ we 
can find a codimension one hyper plane $P$ that pass through $f_{\alpha,\beta}$ and 
intersects the interior of $\C$. This hyper plane is not a wall of $\A$ and thus 
is not in $W_{\alpha,\beta}$. The intersection of $P$ with the $\sigma$ stable wall in 
$W_{\alpha,\beta}$ is a $\sigma$ stable affine hyper plane spanned by $f_{\alpha,\beta}$. 
This is a contradiction.

    Without loss of generality we assume that $f_\alpha$ is contained in the unique 
$\sigma$ stable wall in $W_{\alpha,\beta}$, denote this wall by $W_\alpha$. 
Let $E_{\C,\alpha}$ be the hyper-edge labeled by $\alpha$ 
that contains $\C$. There are two options, either $E_{\C,\alpha}$ contains 
some chamber whose $l_\sigma$ length is smaller than $\C$ or it does not.

    Let $\C'\subset \A$ be the reflection of $\C$ across $f_{\alpha,\beta}$. 
Let $E_{\C'}$ be the hyper-edge that contains $\C'$ and is labeled by the simple 
reflection that corresponds to the facet of $\C'$ in $W_\alpha$.

    We prove that $E_{\C,\alpha}$ contains a chamber whose $l_\sigma$ length is 
less than $l_\sigma(\C)$ if and only if $E_{\C'}$ contains a chamber whose 
$l_\sigma$ length is less than $l_\sigma(\C')$. First lets see how this implies the result.

    \begin{enumerate}
        \item If $E_{\C,\alpha}$ does not contain a chamber whose $l_\sigma$ length is 
less than $l_\sigma(\C)$ then $\Gamma_{\alpha,\beta}$ is a graph, which is 
either a path graph or a cycle on an even number of vertices
 depending on the number of different orbits in 
$E_{\C,\alpha}$, which is either 1 or 2. The edges of this graph alternate between
edges labeled by $\alpha$ and edges labeled by $\beta$. 

        \item If $E_{\C,\alpha}$ does contain a chamber whose $l_\sigma$ length is 
less than $l_\sigma(\C)$ then there are four options for how 
$\Gamma_{\alpha,\beta}$ looks like, according to the number of elements in 
$E_{\C,\alpha}$. By Theorem 7.13 of \cite{my}, the hyper-edge  $E_{\C,\alpha}$ 
contains one or two elements of minimal $l_\sigma$ length and one or two 
elements of maximal $l_\sigma$ length. The hyper-graph in the case 
$\#E_{\C,\alpha}=4$ can be seen in Figure \ref{fig:box hyper-graph}. 

In any case, there are two types of vertices, with different parity of $l_\sigma$ length.
Restricting to the vertices of each type gives a graph, which is either a path graph or a cycle on an even number of vertices. 
There are two hyper-edges of size larger than 2, $E_{\C,\alpha}$ and $E_{\C'}$. All
the vertices of minimal $l_\sigma$ length are contained in $E_{\C'}$.  

    \end{enumerate}
    \end{enumerate}

    \begin{figure}
        \centering
        \begin{tikzcd}
	&& {} \\
	&&& \bullet & \bullet & {...} & \bullet & \bullet \\
	&&& \bullet & \bullet & {...} & \bullet & \bullet \\
	\bullet & \bullet & {...} & \bullet & \bullet \\
	\bullet & \bullet & {...} & \bullet & \bullet
	\arrow[color={rgb,255:red,92;green,92;blue,214}, no head, from=2-4, 
to=2-5]
	\arrow[color={rgb,255:red,214;green,92;blue,92}, no head, from=2-4, 
to=3-4]
	\arrow[color={rgb,255:red,214;green,92;blue,92}, no head, from=2-6, 
to=2-5]
	\arrow[color={rgb,255:red,92;green,92;blue,214}, no head, from=2-7, 
to=2-6]
	\arrow[color={rgb,255:red,214;green,92;blue,92}, no head, from=2-8, 
to=2-7]
	\arrow[color={rgb,255:red,92;green,92;blue,214}, no head, from=3-4, 
to=3-5]
	\arrow[color={rgb,255:red,214;green,92;blue,92}, no head, from=3-5, 
to=3-6]
	\arrow[color={rgb,255:red,92;green,92;blue,214},no head, from=3-6, 
to=3-7]
	\arrow[color={rgb,255:red,214;green,92;blue,92}, no head, from=3-7, 
to=3-8]
	\arrow[color={rgb,255:red,92;green,92;blue,214}, no head, from=3-8, 
to=2-8]
	\arrow[color={rgb,255:red,214;green,92;blue,92}, no head, from=4-1, 
to=2-4]
	\arrow[color={rgb,255:red,92;green,92;blue,214}, no head, from=4-1, 
to=4-2]
	\arrow[color={rgb,255:red,214;green,92;blue,92}, no head, from=4-2, 
to=4-3]
	\arrow[color={rgb,255:red,92;green,92;blue,214}, no head, from=4-3, 
to=4-4]
	\arrow[color={rgb,255:red,214;green,92;blue,92}, no head, from=4-4, 
to=4-5]
	\arrow[color={rgb,255:red,92;green,92;blue,214}, no head, from=4-5, 
to=2-8]
	\arrow[color={rgb,255:red,214;green,92;blue,92}, no head, from=5-1, 
to=3-4]
	\arrow[color={rgb,255:red,214;green,92;blue,92}, no head, from=5-1, 
to=4-1]
	\arrow[color={rgb,255:red,92;green,92;blue,214}, no head, from=5-1, 
to=5-2]
	\arrow[color={rgb,255:red,214;green,92;blue,92}, no head, from=5-2, 
to=5-3]
	\arrow[color={rgb,255:red,92;green,92;blue,214}, no head, from=5-3, 
to=5-4]
	\arrow[color={rgb,255:red,214;green,92;blue,92}, no head, from=5-4, 
to=5-5]
	\arrow[color={rgb,255:red,92;green,92;blue,214}, no head, from=5-5, 
to=3-8]
	\arrow[color={rgb,255:red,92;green,92;blue,214}, no head, from=5-5, 
to=4-5]
\end{tikzcd}\

        \caption{Box hyper-graph, the blue edges correspond to one simple root 
and the red edges to the other. The four vertices connected by red edges on one 
side of the box are in the same hyper-edge. Also the vertices on the other side 
of the box connected by blue edges are in the same hyper-edge. All other 
hyper-edges are of size 2.}
        \label{fig:box hyper-graph}
    \end{figure}
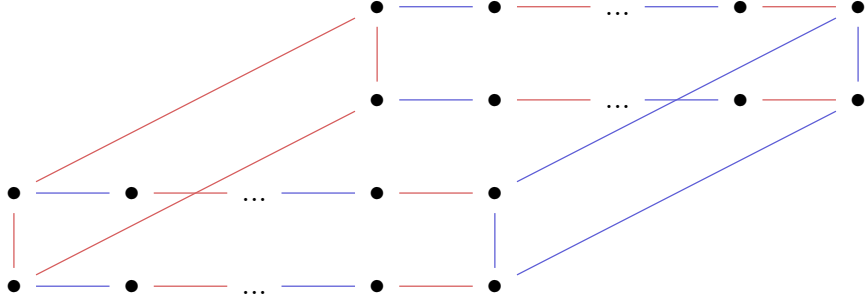
    
    Now we prove the assertion above, that is $E_{\C,\alpha}$ contains a chamber 
whose $l_\sigma$ length is less than $l_\sigma(\C)$ if and only if $E_{\C'}$ contains 
a chamber whose $l_\sigma$ length is less than $l_\sigma(\C')$. 
    
    The roles of $\C$ and $\C'$ are essentially symmetric, so we may assume 
that $E_{\C,\alpha}$ contains a chamber whose $l_\sigma$ length is less than 
$l_\sigma(\C)$ and show the same for $E_{\C'}$. Let $\C_m\in E_{\C,\alpha}$ be 
such that $l_\sigma(\C_m)<l_\sigma(\C)$. Let $\A'$ be a $\sigma$ stable 
apartment that contains $\C_m$. Remark 7.10 of \cite{my} implies that
$\sigma$ preserves the half spaces of $\A'$ defined by the affine hyper plane 
spanned by $f_{\alpha}$.

Let $\C_m'\subset \A'$ be the reflection of 
$\C_m$ across $f_{\alpha,\beta}$. We have $\C'_m\in E_{\C'}$ and 
$l_\sigma(\C'_m)<l_\sigma(\C')$ again by Remark 7.10 of \cite{my}.
\end{proof}

In order to construct a non-zero harmonic function, we only need to show that 
$S\neq\emptyset$. Before we discuss sufficient conditions for that, we one more result.

Recall that for a maximal split torus $T$ of $G$, and for every root $\alpha$ of $T$, 
we have a map $u_\alpha:\mathbb{G}_a\rightarrow \mathbf{G}$ that is an isomorphism
onto a one parameter subgroup $\mathbf{U}_\alpha$.
By Lemma 6.6 of \cite{Helminck_k_involutions} we may assume that these isomorphisms are
defined over $F$. We call such a collection $(u_\alpha)$ a realization of $G$ defined over $F$. 

\begin{prop}\label{edge of size 1}
    Let $\C$ be a $\sigma$ stable chamber inside a $\sigma$ stable apartment 
$\A$. Let $T$ be the maximal split torus that corresponds to $\A$.
 Let $f$ be a facet of $\C$ that lays in a wall 
defined by a $\sigma$ stable root $\alpha$. The group $H$ acts transitively on
all the chambers that contain $f$
 if and only if $\sigma(u_\alpha(x))=u_\alpha(x)$ for all $x\in F$.
\end{prop}

\begin{proof}
    For the first direction, assume that $\sigma(u_\alpha(x))=u_\alpha(x)$ for 
all $x\in F$. It implies that also $\sigma(u_{-\alpha(x)})=u_{-\alpha(x)}$ for 
all $x\in F$. Using the group generated by the elements of the form 
$u_{\pm\alpha(x)}$, the chamber $\C$ can be moved to any other chamber that 
contains $f$, thus they are all in the same $H$ orbit.

    For the second direction, we use the assumption that $\alpha$ is $\sigma$ 
stable. There are two options, either for all $x\in F$, 
$\sigma(u_\alpha(x))=u_\alpha(-x)$, or, for all $x\in F$, 
$\sigma(u_\alpha(x))=u_\alpha(x)$. We want to show that the second case 
happens. 

    Assume by contradiction that for all $x\in F$, 
$\sigma(u_\alpha(x))=u_\alpha(-x)$. 

Without loss of generality assume that there is $x\in F$ such that $u_\alpha(x)$ 
fixes $f$ but does not fix $\C$ (Otherwise replace $\alpha$ by $-\alpha$). 
By our assumption there is $h\in H$ such that $u_\alpha(x)\C=h\C$.

 Thus,
$u_\alpha(-x)\C=\sigma(u_\alpha(x)\C)=h\C=u_\alpha(x)\C$. Therefore, $u_\alpha(2x)$ 
fixes $\C$. We know that $u_\alpha(x)$ does not fix $\C$. This implies that the 
valuation of $2x$ is different from the valuation of $x$ which is a 
contradiction.
\end{proof}

Now, we give sufficient conditions for the existence of full orbits of $l_\sigma$ length zero.

\begin{prop}\label{not type A2n}
    Let $G$ be a simple group not of type $A_{2n}$, $\sigma$ an algebraic 
involution defined over $\s$ and assume that $H=G^\sigma$ is split over $\s$. 
Assume that $X=G/H$ is quasi-split. Then there is a chamber $\C$ in $\B$ with 
$l_\sigma(\C)=0$ such that the corresponding vertex in $\Gamma^0_{aff,F}(X)$ is 
full.
\end{prop}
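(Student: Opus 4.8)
The plan is to reduce everything to the residue field $k$ and then feed in Prasad's quasi-split criterion (Proposition \ref{springer}) together with the surjectivity result of Theorem \ref{Hunnel} applied over $k$. Since $\sigma$ is defined over $\s$ and $H=G^\sigma$ is split --- hence smooth --- over $\s$, reduction modulo the maximal ideal yields a connected reductive group $\bar G$ over $k$ carrying an involution $\bar\sigma$, with fixed points $\bar H=\bar G^{\bar\sigma}$ split over $k$ and symmetric space $\bar X=\bar G/\bar H$. The first step is to check that $\bar X$ is quasi-split over $k$: by the criterion of \cite{V74} quoted above, pick over $F$ a maximal $\sigma$-split torus $T^-$ with $Z_G(T^-)$ a maximal torus, conjugate it into a maximal $\s$-split torus, spread $Z_G(T^-)$ out to a closed $\s$-subgroup scheme, and use that a smooth affine $\s$-group scheme with connected reductive fibres and torus generic fibre also has torus special fibre; hence $Z_{\bar G}(\bar T^-)$ is a maximal torus of $\bar G$ and $\bar X$ is quasi-split.

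Fix a $\sigma$-stable hyperspecial point $x_0\in\B$ attached to the $\s$-structure, so that its link is the spherical building of $\bar G(k)$. By Proposition \ref{springer} together with Theorem \ref{Hunnel}, the fact that $\bar X$ is quasi-split over $k$ produces a closed $\bar B$-orbit $v$ on $\bar X$, defined over $k$, that is full in $\Gamma_k(\bar X)$ --- this is exactly the argument used in Section \ref{s6}. Using the description in \cite{my} of the hyper graph $\Gamma^0_{aff,F}(X)$ near $x_0$ in terms of residue-field symmetric spaces --- the same device that appears in case 4 of the proof of Proposition \ref{affine support function} --- there is a chamber $\C$ through $x_0$ with $l_\sigma(\C)=0$ whose $n=\mathrm{rank}(G)$ hyper edges of $\Gamma^0_{aff,F}(X)$ through the faces of $\C$ containing $x_0$ correspond to the hyper edges of $\Gamma_k(\bar X)$ at $v$; since $v$ is full, these $n$ hyper edges at $\C$ have size at least two.

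It remains to control the one remaining hyper edge at $\C$: the one through the face $f_0$ of $\C$ opposite $x_0$, which is labelled by the affine simple reflection and, inside a $\sigma$-stable apartment $\A\ni\C$, lies in the wall of an affine root whose linear part is $\pm\theta$, with $\theta$ the highest root of $G$. Here I would invoke Proposition \ref{edge of size 1}: after checking that this root is $\sigma$-stable, the edge has size $1$ exactly when $\sigma$ acts trivially on the corresponding root subgroup, and --- running through the classification of involutions of simple groups as in the proof of Proposition \ref{support function}, now keeping track of the highest-root wall --- one shows that this degenerate possibility can occur only inside type $A_{2n}$, and in fact only for the pair $G\cong PGL_{2n+1}$, $H\cong P(GL_n\times GL_{n+1})$. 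Since $G$ is assumed not of type $A_{2n}$, the affine edge at $\C$ has size at least two --- replacing $\C$, if necessary, by another $l_\sigma$-length $0$ chamber matched to the same $v$, or adjusting the hyperspecial vertex --- so $\C$ is full and lies in $S$.

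I expect this last point --- the affine hyper edge --- to be the main obstacle. The finite hyper edges are governed by Prasad's criterion transported to the residue field, but the affine simple reflection has no finite-field analogue, so it must be handled directly through Proposition \ref{edge of size 1} and the Tits-index data of the symmetric pair, and this is precisely where the hypothesis ``not of type $A_{2n}$'' is consumed. A secondary technical point is making the correspondence between $l_\sigma$-length $0$ chambers through $x_0$ and closed Borel orbits over $k$ precise enough that fullness for the finite nodes genuinely transfers; here one leans on the local structure theory of \cite{my}.
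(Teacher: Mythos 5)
Your overall architecture matches the paper's: take the hyperspecial vertex $o$ attached to the $\s$-structure (it is $\sigma$-stable since $\sigma$ is defined over $\s$), identify chambers through $o$ with the flag variety of the residue-field group, invoke Proposition \ref{springer} together with Theorem \ref{Hunnel} to get a $\Delta$-full chamber $\C$ of $l_\sigma$-length $0$, and then reduce the remaining affine node to a size computation via Proposition \ref{edge of size 1} for the highest root $\gamma$. Up to that point you are following the paper's route (your extra care about quasi-splitness of the special fibre is a reasonable supplement the paper leaves implicit).

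However, there is a genuine gap exactly where you predict the difficulty: you never prove that the affine hyper edge at $\C$ has size at least two. Saying that ``running through the classification of involutions \ldots keeping track of the highest-root wall'' shows the degenerate case occurs only for $PGL_{2n+1}/P(GL_n\times GL_{n+1})$ is an assertion of the proposition's content, not an argument; no case analysis is actually carried out, and it is not even clear a priori that the answer depends only on the abstract pair $(G,H)$ rather than on the chosen chamber. The paper's proof handles precisely this dependence: writing $\sigma=\sigma_0\circ \mathrm{Int}(t)$ with $\sigma_0$ a diagram automorphism, it uses the $\Delta$-fullness of $\C$ together with Proposition \ref{edge of size 1} to force $\alpha(t)=-1$ for every $\sigma$-fixed simple root and $\alpha(t)\,\sigma(\alpha)(t)=1$ for swapped pairs, then computes $\gamma(t)=(-1)^{\sum_\alpha n_\alpha}=(-1)^{h-1}=-1$ using that the Coxeter number $h$ is even for every simple type except $A_{2n}$ --- this is where the hypothesis is consumed, not in a Tits-index case check. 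In the outer case one additionally needs $\sigma_0(u_\gamma(x))=u_\gamma(x)$, which the paper proves by an induction on $\sigma$-stable positive roots with Chevalley structure constants, using that outside type $A_{2n}$ no diagram involution sends a simple root to an adjacent one (so swapped simple roots are perpendicular). None of this computation, nor any substitute for it, appears in your proposal, and your fallback of ``replacing $\C$ by another chamber or adjusting the hyperspecial vertex'' is not justified either; so the key step of the proposition remains unproved.
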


\begin{proof}
    Consider $o$, the hyperspecial vertex of $\B$ that corresponds to 
$\mathbf{G}(\s)$. The vertex $o$ is $\sigma$ stable because $\sigma$ is defined 
over $\s$. The $\mathbf{H}(\s)$ orbits on the chambers that contain $o$ 
correspond to $\mathbf{H}(k)$ orbits on the flag variety of $\mathbf{G}(k)$. By 
Proposition \ref{springer} over $\Bar{k}$ there is a closed Borel orbit that 
is $\Delta$ full. By Theorem \ref{Hunnel} such an orbit exists also over $k$. 
This closed Borel orbit corresponds to a chamber $\C\subset \B$ that contains 
$o$ and is $\sigma$ stable, meaning $l_\sigma(\C)=0$. This chamber is $\Delta$ 
full also in $\Gamma^0_{aff,F}(X)$, we only need to show that $\C$ is
$\tilde{\Delta}$ full. 

        Let $T$ be a $\sigma$ stable maximal torus corresponding to a 
$\sigma$ stable apartment that contains $\C$. We may choose $\Delta$ to be a set of 
simple roots that are all positive on $\C$. Let $\gamma$ be the highest root 
with respect to this choice.

        Let $(u_\alpha)$ be a realization of $G$ defined over $F$.
        
        The set $\Tilde{\Delta}\backslash \Delta$ consists of a single element 
 corresponding to the unique vertex in the affine Dynkin diagram of $G$, 
that is not in the usual Dynkin diagram of $G$. The facet of $\C$ 
corresponding to $\Tilde{\Delta}\backslash \Delta$ is parallel to the walls 
defined by $\gamma$. The root $\gamma$ is $\sigma$ stable so by Proposition 
\ref{edge of size 1} it is enough to show that for all $x\in F$, 
$\sigma(u_\gamma(x))=u_\gamma(-x)$. 
        
         The chamber $\C$ is $\sigma$ stable, so $\sigma$ induces a map 
$\sigma:\Delta\rightarrow \Delta$.
         
        Section 7 of 
\cite{Springer1987TheCO} implies that there is an involution $\sigma_0:G\rightarrow G$ such that 
$\sigma_0(u_\alpha(x))=u_{\sigma(\alpha)}(x)$ for all $\alpha\in \Delta$ and 
$x\in F$. Furthermore, there is some $t\in T$ such that $\sigma=\sigma_0\circ 
Int(t)$, where $Int(t)$ is conjugation by $t\in T$. 
    
    For $\alpha\in \Delta$, such that $\sigma(\alpha)\neq \alpha$, denote $\beta=\sigma(\alpha)$.
     
    We have, for any $x\in F$, $\sigma_0\circ Int(t)\circ \sigma_0 \circ 
Int(t)(u_\alpha(x))=u_\alpha(x)$, therefore $\beta(t)\cdot \alpha(t)=1$.

    For $\alpha\in \Delta$, such that $\sigma(\alpha)=\alpha$, we have for any $x\in F$, 
$\sigma(u_\alpha(x))=u_\alpha(\alpha(t)x)$ so we must have $\alpha(t)^2=1$.  We 
know that $\C$ is $\Delta$ full, therefore by Proposition \ref{edge of size 1} 
we must have $\alpha(t)=-1$. 

    We can write $\gamma$ as a sum of simple roots 
$\gamma=\sum_{\alpha\in\Delta} n_\alpha\alpha$.
    Notice that as $\gamma$ is $\sigma$ stable we have 
$n_\alpha=n_{\sigma(\alpha)}$ for any $\alpha\in\Delta$.
    
    We have 
$$\gamma(t)=\Pi_{\alpha\in\Delta}\alpha(t)^{n_\alpha}=\Pi_{\alpha\in\Delta,\sigma(\alpha)=\alpha}\alpha(t)^{n_\alpha}=(-1)^{\sum_{\alpha\in\Delta,\sigma(\alpha)
=\alpha}n_\alpha}=(-1)^{\sum_{\alpha\in\Delta}n_\alpha}$$

    The sum $\sum_{\alpha\in\Delta}n_\alpha$ is equal to the Coxeter 
number of $G$ minus $1$. The Coxeter number of $G$ is even because $G$
 is not of type $A_{2n}$, thus $\gamma(t)=-1$.

       There are two cases, either $\sigma$ is an inner involution and then 
$\sigma_0$ is trivial or $\sigma$ is an outer involution.
    
    If $\sigma$ is an inner involution, then  
$\sigma(x_\gamma(a))=x_\gamma(\gamma(t)a)=x_\gamma(-a)$ and we are done.

    We are left with the case of $\sigma$ being an outer involution. In this case $G$ 
must be simply laced (see for example \cite{Springer1987TheCO}). It is enough 
to show that for any $x\in F$ we have $\sigma_0(u_\gamma(x))=u_\gamma(x)$. 

We say that a $\sigma$ stable positive root $\alpha$ is nice if 
for all $x\in F$ we have $\sigma_0(u_\alpha(x))=u_\alpha(x)$.
    
    We prove by induction that all $\sigma$ stable positive roots are nice.
    We already have this for simple roots by the construction of $\sigma_0$.
     It is enough to show the following:

    \begin{enumerate}
        \item If the sum of two nice roots $\alpha,\beta$ is a root, then $\alpha+\beta$ is nice.
        \item If $\alpha$ is nice, then for every simple root $\beta$ such that
         $\sigma(\beta)\neq\beta$ and such that $\alpha+\beta+\sigma(\beta)$ is a root,
          $\alpha+\beta+\sigma(\beta)$ is nice. 
    \end{enumerate}

    By Section 10.2 of \cite{Springer1981LinearAG} there is a bi-linear 
function $f:X^*(T)\times X^*(T)\rightarrow \Z$ such that if $\alpha,\beta$ are 
positive roots whose sum is a root we have for all $x,y\in F$, 
$$u_\alpha(x)u_\beta(y)u_\alpha(-x)u_\beta(-y)=u_{\alpha+\beta}((-1)^{f(\alpha,\beta)}xy)$$

    Therefore, if $\alpha,\beta$ are nice and $\alpha+\beta$ is a root then 
     $\alpha+\beta$ is nice.

    Let $\alpha$ be nice and let $\beta_1,\beta_2$ be two different simple roots 
such that $\sigma(\beta_1)=\beta_2$. Assume that $\alpha+\beta_1+\beta_2$ is 
also a root. We show that $\alpha+\beta_1+\beta_2$ and 
this will finish the proof. 
    
    By the construction of $\sigma_0$ we have for all $x\in F$, 
$\sigma_0(u_{\beta_1}(x))=u_{\beta_2}(x)$. 
    
    There is $c\in F^\times$ such that for all $x\in F$, 
$\sigma_0(u_{\alpha+\beta_1}(x))=u_{\alpha+\beta_2}(cx)$.  We have for all 
$x,y\in F$

$$\sigma_0(u_{\alpha+\beta_1}((-1)^{f(\alpha,\beta_1)}xy))=\sigma_0(u_\alpha(x)u
_{\beta_1}(y)u_\alpha(-x)u_{\beta_1}(-y))=$$

$$=u_\alpha(x)u_{\beta_2}(y)u_\alpha(-x)u_{\beta_2}(-y)=u_{\alpha+\beta_2}((-1)^
{f(\alpha,\beta_2)}xy)$$
    
    Therefore $c=(-1)^{f(\alpha,\beta_2)+f(\alpha,\beta_1)}$. We also have for 
all $x,y\in F$

$$\sigma_0(u_{\alpha+\beta_1+\beta_2}((-1)^{f(\alpha+\beta_1,\beta_2)}xy))=\sigma_0(u_{\alpha+\beta_1}(x)u_{\beta_2}(y)u_{\alpha+\beta_1}(-x)u_{\beta_2}(-y))=$$

$$=u_{\alpha+\beta_2}(cx)u_{\beta_1}(u)u_{\alpha+\beta_2}(-cx)u_{\beta_1}(-y)=u_
{\alpha+\beta_1+\beta_2}((-1)^{f(\alpha+\beta_2,\beta_1)}cxy)$$
    
    It is enough to show that 
$(-1)^{f(\alpha+\beta_1,\beta_2)}=(-1)^{f(\alpha+\beta_2,\beta_1)}c$. 

This is equivalent to  
$f(\alpha+\beta_1,\beta_2)+f(\alpha+\beta_2,\beta_1)+f(\alpha,\beta_2)+f(\alpha,
\beta_1)$ being even. Using the bi-linearity of $f$, it is enough to show that 
$f(\beta_1,\beta_2)+f(\beta_2,\beta_1)$ is even. By Subsection 10.2.3 of 
\cite{Springer1981LinearAG} it is equivalent to $\beta_1$ and $\beta_2$ being 
perpendicular. $G$ is not of type $A_{2n}$, so there are no involutions of its 
Dynkin diagram that send a vertex to a neighboring vertex. Therefore $\beta_1$ 
and $\beta_2$ must be perpendicular and we are done.
\end{proof}

We are left to deal with the case of $G$ being is of type $A_{2n}$.

\begin{prop}
    Let $G$ be a simple group split over $F$ of type $A_{2n}$, up to 
conjugation $G$ has two split symmetric subgroups, $H$ such that $G/H$ is quasi-split.
 Denote by $Z$ the center of $G$, $G/Z\cong PGL_{2n+1}$. The projection 
of $H$ to $G/Z$ is either conjugate to $P(GL_n\times GL_{n+1})$ or to 
$PO_{2n+1}$. In the first case $S=\emptyset$ and in the second case 
$S\neq\emptyset$.
\end{prop}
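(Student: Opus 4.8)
The plan is to dispose of the classification statement quickly and then treat the two cases by running the arguments of the previous two propositions, the decisive invariant being the action of $\sigma$ on the root subgroup $u_\gamma$ of the highest root $\gamma$.

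\emph{Classification.} Every $F$-involution of a split group of type $A_{2n}$ induces one of $G/Z\cong PGL_{2n+1}$; up to conjugacy these are the inner ones $\mathrm{Int}(t_{p,q})$, with $t_{p,q}$ the image of $\mathrm{diag}(I_p,-I_q)$ and $p+q=2n+1$ (fixed group projecting to $P(GL_p\times GL_q)$), and, since $2n+1$ is odd so that no symplectic involution exists, a single outer class with fixed group projecting to $PO_{2n+1}$; see \cite{Helminck_k_involutions}. I would then invoke the standard criterion (\cite{V74}): $GL_{p+q}/(GL_p\times GL_q)$ is quasi split iff $|p-q|\le 1$, whereas $GL_m/O_m$ is always quasi split, the full diagonal torus being $\sigma$-split with torus centralizer. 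As $p-q$ is odd, the inner case gives a quasi split $X$ only for $\{p,q\}=\{n,n+1\}$. Hence there are precisely two symmetric subgroups with quasi split $X$, projecting to $P(GL_n\times GL_{n+1})$ and to $PO_{2n+1}$, and both are split over $F$.

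\emph{The case $PO_{2n+1}$: $S\ne\emptyset$.} I would run the proof of the previous proposition verbatim up to its final step: since $PGL_{2n+1}/PO_{2n+1}$ is quasi split, Proposition \ref{springer} together with Theorem \ref{Hunnel} (applied over the residue field, where both groups split) produce a $\sigma$-stable chamber $\C\ni o$ whose vertex in $\Gamma^0_{aff,F}(X)$ is $\Delta$-full, and it remains to examine the one face of $\C$ lying in a wall of $\gamma=\alpha_1+\dots+\alpha_{2n}$. Writing $\sigma=\sigma_0\circ\mathrm{Int}(t)$ with $\sigma_0$ the pinned outer (diagram) involution, the simple roots are matched in pairs $\{\alpha_i,\alpha_{2n+1-i}\}$ by $\sigma$ with $\alpha_i(t)\alpha_{2n+1-i}(t)=1$ (as in the previous proof), so $\gamma(t)=\prod_{i=1}^{n}\alpha_i(t)\alpha_{2n+1-i}(t)=1$ and $\sigma(u_\gamma(x))=\sigma_0(u_\gamma(x))$. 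The crux is to show $\sigma_0(u_\gamma(x))=u_\gamma(-x)$: this is exactly where the inductive sign analysis of the previous proposition must be modified, because the swapped simple pair $\{\alpha_n,\alpha_{n+1}\}$ in the middle of the diagram is \emph{adjacent} rather than perpendicular, so $f(\alpha_n,\alpha_{n+1})+f(\alpha_{n+1},\alpha_n)$ is odd (10.2.3 of \cite{Springer1981LinearAG}); building $\gamma$ up from this pair outward through the remaining, mutually perpendicular, swapped pairs preserves that single sign flip, yielding $\sigma_0(u_\gamma(x))=u_\gamma(-x)$. (Equivalently, in the matrix model the pinned outer involution $X\mapsto -J\,{}^tX\,J^{-1}$ of $\mathfrak{sl}_{2n+1}$ sends the top root vector $E_{1,2n+1}$ to $-E_{1,2n+1}$.) By Proposition \ref{edge of size 1} the affine edge at $\C$ then has size $\ge 2$, so $\C$ is full and $S\ne\emptyset$.

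\emph{The case $P(GL_n\times GL_{n+1})$: $S=\emptyset$.} Here I must rule out \emph{every} full chamber of $l_\sigma$-length $0$, i.e. every full $\sigma$-stable chamber. Let $\C$ be such a chamber, $\A\supseteq\C$ a $\sigma$-stable apartment, $T$ its $\sigma$-stable maximal split torus, and $\Delta$ the simple roots positive on $\C$. Since $\sigma$ is inner with $\sigma^2=1$, the automorphism it induces on the cyclic affine diagram $\tilde A_{2n}$ is a rotation of order dividing $2$, hence trivial because $2n+1$ is odd; thus $\sigma$ fixes every face of $\C$, acts trivially on $X^*(T)$, and equals $\mathrm{Int}(t)$ for some $t\in T$. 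Each finite simple root is then $\sigma$-stable, so by Proposition \ref{edge of size 1} the $\Delta$-fullness of $\C$ forces $\alpha(t)=-1$ for all $\alpha\in\Delta$, whence $\gamma(t)=\prod_{\alpha\in\Delta}\alpha(t)^{n_\alpha}=(-1)^{\sum_\alpha n_\alpha}=(-1)^{2n}=1$ — the key contrast with the previous proposition, the exponent $\sum_\alpha n_\alpha$ being the Coxeter number minus one and therefore even precisely in type $A_{2n}$. Hence $\sigma(u_\gamma(x))=u_\gamma(\gamma(t)x)=u_\gamma(x)$, so by Proposition \ref{edge of size 1} the affine edge at $\C$ has size $1$ and $\C$ is not $\tilde\Delta$-full, a contradiction; thus $S=\emptyset$.

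\emph{Main obstacle.} The delicate part is the sign identity $\sigma_0(u_\gamma(x))=u_\gamma(-x)$ for $PO_{2n+1}$ — isolating, in a build-up of $\gamma$ from simple roots, the single Chevalley sign flip contributed by the adjacent middle pair $\{\alpha_n,\alpha_{n+1}\}$ — together with the affine-Dynkin-diagram bookkeeping needed so that the $S=\emptyset$ direction reduces to a torus computation valid for an arbitrary $\sigma$-stable chamber and not only for the one through $o$.
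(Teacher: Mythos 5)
Your proposal is correct, and it splits naturally against the paper's proof: the classification step and the $P(GL_n\times GL_{n+1})$ case are essentially the paper's argument (the paper simply says "by the same calculation as in the previous proposition," i.e.\ $\Delta$-fullness forces $\alpha(t)=-1$ for all simple roots and then $\gamma(t)=(-1)^{2n}=1$ kills $\tilde\Delta$-fullness via Proposition \ref{edge of size 1}); your extra bookkeeping that an inner involution stabilizing a $\sigma$-stable chamber induces a rotation of the cyclic $\tilde A_{2n}$ diagram of order dividing $2$, hence the trivial one since $2n+1$ is odd, is a welcome justification of why the calculation applies to every chamber of $l_\sigma$-length zero and not just one through $o$. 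For the $PO_{2n+1}$ case, however, you take a genuinely different route. The paper does not rerun the highest-root analysis: it identifies $X$ with the space of symmetric matrices in $PGL_{2n+1}$, takes the anti-diagonal matrix $g$, observes its $I$-orbit has $l_\sigma$-length zero, and checks directly that $s^tgs\neq g$ for every $s\in\tilde\Delta$, so that orbit is full. You instead extend the previous proposition's argument to type $A_{2n}$ with the outer involution: $\gamma(t)=\prod_i\alpha_i(t)\alpha_{2n+1-i}(t)=1$, and then the pinned involution satisfies $\sigma_0(u_\gamma(x))=u_\gamma(-x)$ because the swapped middle pair $\{\alpha_n,\alpha_{n+1}\}$ is adjacent, contributing exactly one Chevalley sign flip (your base case $[e_{n+1},e_n]=-[e_n,e_{n+1}]$ plus the perpendicular-pair induction, confirmed by your matrix check on $E_{1,2n+1}$), so Proposition \ref{edge of size 1} gives fullness at the extra affine node. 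Both arguments are sound; yours buys uniformity with the non-$A_{2n}$ proposition and a structural explanation of why the outer case survives while the inner one fails (sign flip from the adjacent pair versus the even exponent $2n$), at the cost of the Chevalley-constant bookkeeping and of invoking the Prasad--Hunnel step again, whereas the paper's explicit anti-diagonal matrix computation is shorter and self-contained.
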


\begin{proof}
    The classification of quasi-split spaces is well known (see for example 
\cite{Springer1987TheCO}), As we assume that both $G$ and $H$ are split there
are no more cases over $F$ then in the algebraically closed field setting (see 
\cite{Helminck_k_involutions}). 

    In the case of $H/Z=P(GL_n\times GL_{n+1})(F)$ the involution $\sigma$ 
is an inner involution. By the same calculation as in the case of an inner
involution in the proof of 
Proposition \ref{not type A2n} we see that any vertex of 
$\Gamma_{aff,F}(X)$ of $l_\sigma$ length zero that $\Delta$ full is not $\tilde{\Delta}$ full.

    In the case of $H/Z$ being a split form of $PO_{2n+1}(F)$, the involution 
$\sigma$ is an outer involution. We can identify $X$ with the space of 
symmetric matrices in $PGL_{2n+1}(F)$ and the involution with $g\mapsto (g^{t})^{-1}$.
 Consider $g\in PGL_{2n+1}(F)$ represented by a matrix with ones on the anti diagonal
  and zeros everywhere else. The $I$ orbit of $g$ has $l_\sigma$ length zero orbit
  as $g^t=g=g^{-1}$.
   It is enough to check that it is full. It is 
easy to check that for every $s\in\Tilde{\Delta}$, $s^tgs\neq g$. Thus, the 
action of $s$ does not fix the orbit and therefore it is full and 
$S\neq\emptyset$.
\end{proof}

We know that the dimension of harmonic functions may be larger than $\#S$, so a 
priory $S=\emptyset$ does not guarantee that there are no non-zero harmonic 
functions on $\Gamma^0_{aff,F}(X)$. Nevertheless we have the following.

\begin{prop}
    Let $G=SL_{2n+1}(F)$, $H=(GL_n(F)\times GL_{n+1}(F))\cap SL_{2n+1}(F)$ and 
$X=G/H$. The only harmonic function on $\Gamma^0_{aff,F}(X)$ is zero.
\end{prop}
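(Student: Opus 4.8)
The plan is to prove directly that every harmonic function $\phi\in\mathcal H(\Gamma^0_{aff,F}(X))$ vanishes identically, arguing by induction on the $l_\sigma$-length of the chamber of $\B$ indexing a vertex. For the base case, the previous proposition gives $S=\emptyset$ for this $X$: no chamber of $l_\sigma$-length $0$ is $\tilde{\Delta}$-full. Concretely, since $\sigma$ is inner it acts on the cyclic affine diagram $\tilde A_{2n}$ of $SL_{2n+1}$ through $\Omega\cong\Z/(2n+1)$, and $2n+1$ being odd forces this action to be trivial; thus every $\sigma$-stable chamber $\C$ has all its panels $\sigma$-fixed, and $2n-1$ of them — the ones coming from the Levi factor $S(GL_n\times GL_{n+1})$ — satisfy $\sigma(u_\alpha(x))=u_\alpha(x)$, so by Proposition \ref{edge of size 1} the corresponding hyper edge through $\C$ has size $1$. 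The harmonic condition on a size-$1$ edge forces $\phi(\C)=0$, so $\phi$ vanishes on all $l_\sigma$-length-$0$ chambers.

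For the inductive step, I would assume $\phi$ vanishes on all chambers of $l_\sigma$-length $<m$ and take $\C$ with $l_\sigma(\C)=m\geq 1$. Choosing a $\sigma$-stable apartment $\A\ni\C$ and a minimal gallery in $\A$ from an $l_\sigma$-length-$0$ chamber to $\C$, its last step crosses a panel $f$ of $\C$ such that the chamber $\C'\subset\A$ across $f$ has $l_\sigma(\C')=m-1$. Let $E$ be the hyper edge through $\C$ labeled by $f$; then $\C'\in E$. If $\#E\leq 2$, the harmonic relation on $E$ together with the inductive hypothesis gives $\phi(\C)=0$ at once. If $\#E\geq 3$, one passes to the rank-$2$ sub-hypergraph $\Gamma_{\alpha,\beta}$ containing $\C$ (with $\alpha$ the label of $f$) and uses the classification of the rank-$\leq 2$ local pictures recalled in the proof of Proposition \ref{affine support function}, together with the deletion/doubling operations of Proposition 7.5 of \cite{my}. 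The feature special to $X=SL_{2n+1}/S(GL_n\times GL_{n+1})$ is that its restricted root system is of type $BC_n$, and the involution is inner with the highest-root wall never $H$-fixed (in the notation of the previous propositions, $\gamma(t)=-1$): only a short list of rank-$2$ configurations then occurs along galleries in $\B$, and in each of them, once all chambers of strictly smaller $l_\sigma$-length have been set to $0$, harmonicity forces the remaining values — in particular $\phi(\C)$ — to vanish; the only ``large'' configuration among them, modeled on the symmetric space $PGL_3/PGL_2$ (cf. figure \ref{fig:PGL_3}), has its unique closed orbit already killed, exactly as in the $A_{2n}$ part of the proof of Proposition \ref{affine support function}. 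Iterating over $m$ yields $\phi\equiv 0$.

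I expect the inductive step to be the main obstacle, and it cannot be a formality: for a general quasi-split $X$ nonzero harmonic functions do exist, so the argument must genuinely use the structure of this particular symmetric space — the parity of $2n+1$ (which forces $S=\emptyset$ and the specific pattern of size-$1$ versus size-$\geq 2$ panels) and the resulting restriction on which rank-$2$ configurations can appear. Pinning down that list and checking that none of its members supports a nonzero harmonic function supported on the chambers of maximal $l_\sigma$-length is the technical heart of the proof. As an alternative route one could bypass the gallery analysis altogether, combining Proposition \ref{steinberg harmonic zero} with the explicit description of $S(X)^I$ as an $\mathcal H(G,I)$-module given in \cite{my} and showing directly that it admits no quotient on which every $T_s$ acts by $-1$; this trades the combinatorial case analysis for an algebraic computation with the affine Hecke module.
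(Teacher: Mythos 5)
Your base case is sound and agrees with the paper: since $S=\emptyset$, every vertex of $l_\sigma$-length zero fails to be $\tilde{\Delta}$-full, hence lies in some hyper edge of size one, and harmonicity kills it. (One small slip in your parenthetical: for $P(GL_n\times GL_{n+1})$ the computation of the preceding proposition gives $\gamma(t)=+1$, i.e.\ $u_{\pm\gamma}(x)\in H$, and it is precisely this, via Proposition \ref{edge of size 1}, that makes the affine-node edge have size one; writing $\gamma(t)=-1$ is backwards, though it does not affect the base case.)

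The inductive step, however, is a genuine gap, and you say so yourself. The scenario your argument cannot handle is an edge $E$ containing your chamber $\mathcal{C}$ together with a \emph{second} vertex of the same maximal $l_\sigma$-length (the ``doubling'' configurations of Proposition 7.5 of \cite{my}, visible in the box hyper graph): there harmonicity only yields $\phi(\mathcal{C})+\phi(\mathcal{C}'')=0$, and the induction stalls. This is not a marginal case — for a general quasi-split $X$ nonzero harmonic functions do exist, and they live exactly on such doubled top-length vertices — so ``pinning down the list of rank-2 configurations'' is not a deferrable technicality but the entire content of the proposition. The paper supplies precisely the missing lemma: every hyper edge of $\Gamma^0_{aff,F}(X)$ contains a \emph{unique} vertex of maximal $l_\sigma$-length. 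By the description of hyper edges in \cite{my}, this reduces to showing that every $\sigma$-stable root $\alpha$ of a $\sigma$-stable maximal split torus $T$ restricts to a surjection $T^\sigma\to F^\times$, and that surjectivity is proved by a parity count special to this $X$: the inner involution permutes the diagonal entries swapping an even number of indices (because $2n+1$ is odd), so besides the two indices $i,j$ carrying $\alpha$ there is a third $\sigma$-fixed index $k$, and the fixed element with $t_i=x$, $t_k=x^{-1}$, all other entries $1$, realizes any $x\in F^\times$. Your proposal never establishes (or even isolates) this uniqueness statement, so as written the proof is incomplete at its decisive point; your alternative Hecke-module route is likewise only sketched.
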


\begin{proof}
    We know that in this case $S=\emptyset$. It is enough to show that in any 
hyper-edge of $\Gamma^0_{aff,F}(X)$ there is only one vertex of maximal 
$l_\sigma$ length. After showing this we can prove by induction on $l_\sigma$ 
that any harmonic function must be zero on any chamber. The fact that there are 
no full vertices with $l_\sigma$ length zero is the base of the induction.

    Let $I_m$ be the identity matrix in $SL_m$.
    Let $\sigma$ be the involution given by conjugation by $-I_{n+1}\times 
I_n\in GL_{n+1}(F)\times GL_{n}(F)$ if $n$ is odd and by $I_{n+1}\times -I_n\in 
GL_{n+1}(F)\times GL_{n}(F)$ if $n$ is even.

    We use the description of hyper-edges given in Section 7 of \cite{my}. It is enough to show 
that for any $\sigma$ stable maximal split torus $T$ of $G$ and for every 
$\sigma$ stable root $\alpha:T\rightarrow F^\times$ of $T$, the restriction of $\alpha$ 
to the fixed points of $\sigma$, $T^\sigma$, is onto $F^\times$.

    All maximal split tori of $G$ are conjugate. We may switch $\sigma$ to another 
inner involution and assume that the torus $T$ is the torus of diagonal 
elements in $G$. Let $w\in N_G(T)$ be the element defining the inner 
involution. We denote the entries of $T$ by $t=(t_1,...,t_{2n+1})$. Write 
$w^{-1}tw=(t'_1,...,t'_{2n+1})$. Conjugation by $w$ is an involution and 
therefore for every $1\leq i \leq 2n+1$ we have either $t'_i=t_i$ or there is 
 $j\neq i$ such that $t'_i=t_j$ and $t'_j=t_i$.

    We have a $\sigma$ stable root $\alpha$. Recall that all the roots are of the 
form $\alpha_{i,j}(t_1,...,t_{2n+1})\rightarrow t_i/t_j$. Let $i,j$ be such that
$\alpha=\alpha_{i,j}$. The root $\alpha$ is 
$\sigma$ stable so we must have $t'_i=t_i$ and $t'_j=t_j$. There has to be some 
$k\neq i,j$ such that $t'_k=t_k$ because the number of indices switched by $w$ 
is even. For any $x\in F^\times$ take $t\in T$ with $t_i=x,t_k=x^{-1}$ and all 
the rest are 1. The element $t$ is fixed by the involution and 
$\alpha_{i,j}(t)=x$. Thus, the restriction of $\alpha$
 to the fixed points of the involution is onto $F^\times$.
\end{proof}

Now we can deduce the following theorem:

\begin{theorem}\label{Theorem chi_0}
    Let $G$ be a reductive group split over $\s$, let $H=G^\sigma$, and assume 
that $H$ is also split over $\s$.  Let $\mathbf{Z}=Z(\mathbf{G})$ be the center 
of $\mathbf{G}$.  $\mathbf{G}/\mathbf{Z}$ can be written as a product of 
adjoint simple groups $\mathbf{G}/\mathbf{Z}=G_1\times...\times G_m$. 
    Then there is a non-zero harmonic function on $\Gamma^0_{aff,F}(X)$ if and 
only if the following conditions hold:
    \begin{enumerate}
        \item $X=G/H$ is quasi-split.
        \item There is no $\sigma$ invariant $G_i$, such that $G_i\cong 
PGL_{2n+1}$ and $G^\sigma_i\cong P(GL_n\times GL_{n+1})$.
     \end{enumerate}
\end{theorem}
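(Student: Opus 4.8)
The plan is to assemble this theorem from the results already proved in this section, treating the two implications separately and, in each case, reducing to the simple factors $G_1,\dots,G_m$ of $\mathbf G/\mathbf Z$.

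\textbf{Necessity of conditions 1 and 2.} Condition 1 is immediate from Theorem \ref{easy direction}: if $X$ is not quasi split then $\mathcal H(\Gamma^0_{aff,F}(X))=0$, so there is no nonzero harmonic function at all. For condition 2 I would first put in place the product structure. The extended Bruhat--Tits building $\B$ of $G$ is the product $\B_1\times\cdots\times\B_m$ of the buildings of the $G_i$; the involution $\sigma$ permutes the factors; and, up to the centre/component-group bookkeeping discussed at the end, $\Gamma^0_{aff,F}(X)$ is a product of hyper graphs, one factor $\Gamma^0_{aff,F}(G_i/G_i^\sigma)$ for each $\sigma$-stable $G_i$ and one ``group case'' factor for each $\sigma$-orbit $\{G_i,G_j\}$ of size two. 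Since the hyper edges of a product hyper graph $\Gamma\times\Gamma'$ are the sets $E\times\{w\}$ and $\{v\}\times E'$, a function on it is harmonic if and only if it is harmonic in each variable separately; hence $\mathcal H(\Gamma\times\Gamma')=0$ as soon as $\mathcal H(\Gamma)=0$ or $\mathcal H(\Gamma')=0$. So it is enough to show that if some $\sigma$-stable $G_i\cong PGL_{2n+1}$ has $G_i^\sigma\cong P(GL_n\times GL_{n+1})$, then the corresponding factor carries no nonzero harmonic function. That is exactly the proposition proved above in the case $G=SL_{2n+1}$; its proof — each hyper edge of $\Gamma^0_{aff,F}$ has a unique vertex of maximal $l_\sigma$, so one runs a descending induction on $l_\sigma$ with base case $S=\emptyset$ coming from the $A_{2n}$ classification proposition — uses only the root combinatorics of type $A_{2n}$, hence goes through verbatim with $SL_{2n+1}$ replaced by $PGL_{2n+1}$.

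\textbf{Sufficiency.} Assume conditions 1 and 2. Using the same product decomposition, it suffices to produce for each $\sigma$-orbit of factors a chamber of $l_\sigma$ length $0$ in the relevant building that is $\Tilde\Delta$-full: the tuple of these chambers is then a vertex of $\Gamma^0_{aff,F}(X)$ lying in the set $S$ of full chambers of $l_\sigma$ length $0$, so $S\neq\emptyset$, and composing the map $s$ of Proposition \ref{affine support function} with the coordinate functional $\Z[S]\to\mathbb C$ at this vertex yields a harmonic function on $\Gamma^0_{aff,F}(X)$ equal to $1$ there, in particular nonzero. For a $\sigma$-orbit $\{G_i,G_j\}$ of size two the symmetric space is a group (diagonal case), which is quasi split, and the diagonal chamber $(\C_0,\C_0)$ built from the fundamental alcove has $l_\sigma=0$ and is full because every affine simple-reflection hyper edge through it has exactly two vertices; alternatively $St\boxtimes St$ is $\Delta G_i$-distinguished, so Proposition \ref{steinberg harmonic zero} already gives a nonzero harmonic function on this factor. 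For a $\sigma$-stable $G_i$ not of type $A_{2n}$, quasi-splitness of $X$ forces quasi-splitness of $G_i/G_i^\sigma$, and the proposition on simple groups not of type $A_{2n}$ supplies a full chamber of $l_\sigma$ length $0$. For a $\sigma$-stable $G_i\cong PGL_{2n+1}$ the $A_{2n}$ classification proposition says $G_i^\sigma$ is conjugate to $P(GL_n\times GL_{n+1})$ or to $PO_{2n+1}$; condition 2 rules out the first, and in the second case that proposition exhibits a full chamber of $l_\sigma$ length $0$ (represented by the matrix with $1$'s on the anti-diagonal). This covers every case.

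\textbf{The main obstacle.} The delicate point is the product decomposition of $\Gamma^0_{aff,F}(X)$ over the $\sigma$-orbits of simple factors. Since $\mathbf G\to\mathbf G/\mathbf Z=\prod\mathbf G_i$ is only an isogeny, the centre need not lie in $\mathbf H=\mathbf G^\sigma$ and $\mathbf H$ need not be connected, so the vertex set of $\Gamma^0_{aff,F}(G/H)$ is only the quotient of the product of the vertex sets of the $\Gamma^0_{aff,F}(G_i/G_i^{\sigma,\circ})$ by a finite group acting through the colours (through $\Omega$) and through $\pi_0(\mathbf H)$. I would handle this exactly as in Section \ref{s5} for the spherical building: pass to $H^0$, write $\Gamma^0_{aff,F}(G/H)$ as the quotient of $\Gamma^0_{aff,F}(G/H^0)$ by $H/H^0$, note that a harmonic function on such a quotient pulls back to a harmonic function on the cover (so vanishing is inherited, giving necessity), and note that the function $s$ of Proposition \ref{affine support function}, being manufactured from the $l_\sigma$-order and the set of full chambers — data preserved by the relevant finite groups — descends to the quotient (so the harmonic function built for sufficiency lives on $\Gamma^0_{aff,F}(G/H)$ itself). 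Checking that a product of full $l_\sigma$-zero chambers remains full under these finite quotients is the single step I expect to demand genuine care.
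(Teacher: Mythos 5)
Your proposal is correct and takes essentially the same route as the paper: reduce along $\mathbf{G}\to\mathbf{G}/\mathbf{Z}$ to a product of hyper graphs over the $\sigma$-orbits of simple factors (plus group-case factors), use that a product carries a non zero harmonic function exactly when every factor does, get necessity from Theorem \ref{easy direction} together with the $SL_{2n+1}$ vanishing proposition, and get sufficiency from $S\neq\emptyset$ (the non-$A_{2n}$ proposition, the $A_{2n}$ classification, and the group case) composed with the map $s$ of Proposition \ref{affine support function}. The paper's own proof is terser — it simply asserts $\Gamma^0_{aff,F}(X)=\Gamma^0_{aff,F}(G'/H')$ and the product decomposition — while you make the isogeny and component-group bookkeeping explicit; this is a difference of presentation rather than of method.
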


\begin{proof}
    For simple groups we already proved this.  Denote $Z=\mathbf{Z}(F)$,
     $G'=G/Z$ and $H'=H/Z$. We 
have, $\Gamma^0_{aff,F}(X)=\Gamma^0_{aff,F}(G'/H')$. The hyper-graph $\Gamma_{aff,F}(G'/H')$ 
can be written as a product of hyper-graphs of simple groups or of spaces of 
the from $K\times K/\Delta K$. We might get hyper-graphs of the form
$\Gamma_{aff,F}(G_i/H_i)$ where $G_i$ is simple but $H_i$ is smaller than the fixed points
of the involution on $G_i$. Like in the proof of Proposition \ref{support function}, $H_i$ must include
the connected component of the identity of the fixed points of the involution. The existence
of a Harmonic function on $\Gamma^0_{aff,F}(G_i/H_i)$ is equivalent to the existence of a 
harmonic function on $\Gamma^0_{aff,F}(G_i/G_i^\sigma)$.

Existence of a non-zero harmonic function on a 
product of hyper-graphs is equivalent to the existence of a non-zero harmonic 
function on each of them. Thus the results follows from the case of a simple 
group and the group case.

    In the group case it is easy to construct a non-zero harmonic function.
\end{proof}

Now we deduce the following:

\begin{cor}\label{main}
    Let $F$ be a non archimedean local field. Let $G$ and $H$ be split over 
$\s$, let $St_{\chi_0}$ be the Steinberg representation of $G$ that corresponds 
to the character $\chi_0$. $St_{\chi_0}$ is $H$ distinguished if and only if 
the conditions of Theorem \ref{Theorem chi_0} hold.
\end{cor}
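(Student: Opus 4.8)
The plan is to read the corollary off directly from Proposition~\ref{steinberg harmonic zero} and from the theorem immediately preceding it, which is the only real work this statement requires. First I would recall that, by definition, $St_{\chi_0}$ is $H$-distinguished precisely when $Hom_H(St_{\chi_0},\mathbb{C})\neq 0$. Proposition~\ref{steinberg harmonic zero} identifies $\dim Hom_H(St_{\chi_0},1)$ with $\dim\mathcal{H}(\Gamma^0_{aff,F}(X))$, so $St_{\chi_0}$ is $H$-distinguished if and only if there exists a non-zero harmonic function on $\Gamma^0_{aff,F}(X)$. The hypotheses needed for this step — that $G$ and $H$ are split over $F$ and that $\sigma$ is defined over $\s$ — are exactly those assumed in the statement, so there is no issue invoking it.

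Second, I would apply the preceding theorem, which asserts that a non-zero harmonic function on $\Gamma^0_{aff,F}(X)$ exists if and only if (1) $X=G/H$ is quasi split and (2) there is no $\sigma$-invariant factor $G_i$ in the decomposition $\mathbf{G}/\mathbf{Z}=G_1\times\cdots\times G_m$ with $G_i\cong PGL_{2n+1}$ and $G_i^\sigma\cong P(GL_n\times GL_{n+1})$. Chaining the two equivalences — distinction $\Longleftrightarrow$ existence of a non-zero harmonic function $\Longleftrightarrow$ conditions (1) and (2) — gives the corollary immediately.

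The point worth flagging is that there is essentially no obstacle left at this stage: all the substance has already been carried out upstream, namely the construction of the map $s$ in Proposition~\ref{affine support function} (which manufactures harmonic functions as soon as $S\neq\varnothing$), the sufficiency arguments producing a full chamber of $l_\sigma$-length zero in the non-$A_{2n}$ case and in the orthogonal $A_{2n}$ case, the vanishing statement in the exceptional $P(GL_n\times GL_{n+1})$ case, and the necessity of quasi-splitness from Theorem~\ref{easy direction}. Thus the only thing to do here is the translation back into representation-theoretic language through Proposition~\ref{steinberg harmonic zero}; the mild bookkeeping to watch is that, as noted in the remark following Theorem~\ref{local field}, for the specific choice $\chi=\chi_0$ the third condition of Theorem~\ref{local field} holds automatically, so no additional harmonicity constraint beyond $\mathcal{H}(\Gamma^0_{aff,F}(X))\neq 0$ enters.
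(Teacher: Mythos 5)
Your proposal is correct and matches the paper's own argument: the corollary is obtained by chaining Proposition~\ref{steinberg harmonic zero} (distinction of $St_{\chi_0}$ is equivalent to $\mathcal{H}(\Gamma^0_{aff,F}(X))\neq 0$) with the theorem immediately preceding it, exactly as you describe. No further comment is needed.
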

\begin{proof}
    This follows from Theorem \ref{Theorem chi_0} and Proposition \ref{steinberg 
harmonic zero}.
\end{proof}
    
\end{section}

\begin{section}{The Steinberg representation of a general unramified 
character}\label{s8}

In the previous Section we answered the question when $St_{\chi_0}$ is $H$ 
distinguished. In this Section we answer the question when $St_\chi$ is 
$H$ distinguished for a general unramified character 
$\chi:G\rightarrow\mathbb{C}^\times$. 

Recall Proposition \ref{distinction harmonic general}, it gives us a condition for 
$St_\chi$ being $H$ distinguished. We need a way to check 
when there is a non-zero harmonic function $\phi$ on $\Gamma^1_{aff,F}(X)$
that satisfies the condition: for 
any chamber $\C$ and $h\in H$, 
$\phi(H^1h\C)=\chi^{-1}\chi_0(\omega(h^{-1}))\phi(H^1\C)$. Here, $H^1=H\cap 
Ker(\omega)$ is a normal subgroup of $H$.

We construct a subgroup $\Omega_H$ of $\Omega$ and show that if $S\neq\emptyset$ and 
 $\chi^{-1}\chi_0$ is trivial on $\Omega_H$ then there is
  a harmonic function on $\Gamma^1_{aff,F}(X)$ with the desired properties.

We need the following results.

\begin{prop}\label{maximal H torus}
    \begin{enumerate}
        \item Every $\sigma$ stable apartment that contains a chamber of 
$l_\sigma$ length 0 corresponds to a maximal split torus $T$ such that identity component
 $T^+=(T\cap H)^0$ is a maximal torus of $H$.
        \item Every two maximal split tori $T_1,T_2\subset G$ such that $T_1^+=(T_1\cap H)^0$ and 
$T_2^+=(T_2\cap H)^0$ are maximal tori of $H$ are $H$ conjugate.
    \end{enumerate}
\end{prop}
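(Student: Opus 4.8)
The plan for part~(1) is to trade the $\sigma$-fixed chamber for a \emph{linear} action of $\sigma$ and then reduce to a building-theoretic statement about $\sigma$-split tori. Let $T$ be the maximal split torus attached to the $\sigma$-stable apartment $\A$, and let $\C\subseteq\A$ be a chamber with $l_\sigma(\C)=0$, so $\sigma(\C)=\C$. Since $\sigma$ acts on $\A$ as an affine automorphism stabilising the simplex $\C$ it fixes the barycentre $p$ of $\C$; taking $p$ as origin identifies $\A$ with $V:=X_*(T)\otimes\mathbb{R}$, on which $\sigma$ now acts linearly, preserving $X_*(T)$ and the root system, so $V=V^+\oplus V^-$ with $\dim V^+=rank(T\cap H)$. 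As $(T\cap H)^0$ is a split subtorus of the split group $H$ we have $rank(T\cap H)\le rank(H)$; the content of~(1) is the reverse inequality, i.e.\ that $(T\cap H)^0$ is a maximal torus of $H$.

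To prove this I would set $L:=(T\cap H)^0$ and pass to the $\sigma$-stable Levi $M:=Z_G(L)\supseteq T$, inside which $\A$ still lives and $p$ still lies in the interior of a $\sigma$-stable chamber. Using $\B(M)=\B(M_{\mathrm{der}})\times\A(Z(M)^0)$ and the fact that $L$ is exactly the identity component of $(Z(M)^0)^{\sigma}$, one checks it suffices to establish the following: if $M'$ is a split semisimple group with an involution $\tau$, if $T'$ is a $\tau$-split maximal torus (so $\tau$ acts by $-1$ on $X_*(T')\otimes\mathbb{R}$), and if $\A(T')$ contains a $\tau$-fixed chamber $\C'$, then $M'$ is trivial. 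Indeed $\tau$ then acts on $\A(T')$ as the point reflection through the barycentre $p'$ of $\C'$, so $\C'$ is a simplex (a product of simplices if $M'$ is not simple) with a centre of symmetry in its interior; since a simplex of dimension $\ge 2$ has no such centre, every simple factor of $M'$ has rank one, i.e.\ $M'$ is a product of copies of $SL_2$ or $PGL_2$ with $\tau$ acting by inversion on $T'$. For one such factor one checks directly --- and this is exactly where the hypothesis that $H$ (hence $M'^{\tau}$) is split enters --- that an order-four representative of the Weyl element realising $\tau$ has a centraliser whose corresponding fixed point in the tree is a vertex, not an interior point, contradicting that $p'$ lies in the interior of $\C'$ unless the factor is trivial.

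For part~(2): since $T_1\cap H$ and $T_2\cap H$ are maximal tori of the split group $H$ they are conjugate by an element of $H$, so after replacing $T_2$ by that conjugate we may assume $S:=T_1\cap H=T_2\cap H$. Then $T_1,T_2$ are maximal split tori of $M:=Z_G(S)$, and $S$ is the identity component of $(Z(M)^0)^{\sigma}$; exactly as in part~(1) the condition $T_i\cap H=S$ forces the image of $T_i$ in $M_{\mathrm{der}}$ to be a $\sigma$-split maximal torus. It therefore remains to show that two $\sigma$-split maximal tori of the split semisimple group $M_{\mathrm{der}}$ are conjugate under $(M_{\mathrm{der}}^{\sigma})^0$; as a $\sigma$-split maximal torus is in particular a maximal $\sigma$-split torus, this is the conjugacy theorem for maximal $\sigma$-split tori (cf.\ \cite{V74}), valid here because $G$ and $H$ are split, and the conjugating element then lies in $(M_{\mathrm{der}}^{\sigma})^0\subseteq H$.

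I expect the crux to be the base case of part~(1) --- ruling out a $\sigma$-fixed chamber in the apartment of a $\sigma$-split maximal torus --- together with the bookkeeping needed to guarantee that the reduction to $M_{\mathrm{der}}$ genuinely preserves the property of containing a $\sigma$-fixed chamber. The rank-one analysis, and with it the precise role of the splitness of $H$, is the delicate point; by contrast the centre-of-symmetry observation and the reductions to Levi and derived subgroups should be routine.
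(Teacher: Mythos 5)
Your part (1) is essentially correct but takes a genuinely different route from the paper. The paper argues inside $M=Z_G(T\cap H)$ by exhibiting a $\sigma$-invariant root subgroup $U_\alpha\subset M$, using the $\sigma$-fixed chamber to see that the translation along $\alpha^\vee$ commutes with $\sigma$, and then invoking Proposition 5.7 of the companion paper to force that translation into $T\cap H$, contradicting $\alpha|_{T\cap H}=1$; you instead pass to $M_{\mathrm{der}}$, use the central-symmetry-of-an-alcove argument to force all simple factors to have rank one, and finish with a rank-one computation. This is more self-contained, at the price of two points you gloss over. First, your key lemma is false as literally stated: $M'=PGL_2$ with $\tau=\mathrm{Ad}\bigl(\begin{smallmatrix}0&\varpi\\1&0\end{smallmatrix}\bigr)$ has a $\tau$-split maximal split torus whose apartment contains a $\tau$-fixed chamber. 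So the hypothesis that $(M'^{\tau})^0$ is split must be part of the statement, and the parenthetical ``$H$ split hence $M'^{\tau}$ split'' needs an argument (a connected subgroup of a split group need not be split); one works: after the rank-one reduction $(M^{\sigma})^0$ is the isogenous image of the torus $L\times\prod_i S_i$, it equals $Z_{H^0}(L)^0$ which is split because $H^0$ is split and $L$ is a split torus, and a torus isogenous to a split torus is split, so each $S_i$ is split. Second, the rank-one verification should be made precise: writing $\tau=\mathrm{Ad}\bigl(\begin{smallmatrix}0&b\\ c&0\end{smallmatrix}\bigr)$, $\tau$ acts on the apartment as the reflection about the point $\tfrac12 v(b/c)$, so a $\tau$-fixed edge forces $v(bc)$ odd, whence the fixed torus is ramified, contradicting its splitness. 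With these repairs part (1) works.

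Part (2) contains a genuine gap. The statement you invoke --- that two $\sigma$-split maximal tori of the split group $M_{\mathrm{der}}$ are conjugate under $(M_{\mathrm{der}}^{\sigma})^0(F)$, ``valid here because $G$ and $H$ are split'' --- is a theorem over algebraically closed fields (this is what \cite{V74} gives) and is false over a $p$-adic field even when $G$ and $H$ are split. Take $G=SL_2$ and $\sigma=\mathrm{Ad}(\mathrm{diag}(1,-1))$, so $H$ is the split diagonal torus; the $\sigma$-split maximal tori are the stabilizers $T_c$ of the line pairs $\{F\cdot(1,c),\,F\cdot(1,-c)\}$, all of them $F$-split, and conjugation by $H$ only moves $c$ within $\pm c\,(F^\times)^2$, so there are several $H$-conjugacy classes. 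Hence the lemma you lean on cannot be cited in this form. What rescues your reduction is an observation you did not make: since $S=T_1\cap H$ is a maximal torus of $H$, one has $(M^{\sigma})^0=Z_{H^0}(S)^0=S\subseteq Z(M)$, so $(M_{\mathrm{der}}^{\sigma})^0\subseteq Z(M)\cap M_{\mathrm{der}}$ is finite; but an involution of a nontrivial semisimple group has positive-dimensional fixed group, so $M_{\mathrm{der}}$ is trivial, $M=Z_G(S)$ is a torus, and $T_1=T_2$, both being its maximal split subtorus. Replacing the appeal to \cite{V74} by this argument makes your part (2) correct (and shorter); the paper instead conjugates the $\sigma$-split parts of $T_1,T_2$ inside the split group $M$ using a conjugacy result of Helminck and deduces $T_1=T_2$ from there.
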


\begin{proof}
\begin{enumerate}
    \item Let $\A$ be a $\sigma$ stable apartment and let $T$ be the maximal 
split torus that corresponds to $\A$. Assume that $\C\subset \A$ is a chamber 
such that $l_\sigma(\C)=0$. Let $T^+=(T\cap H)^0$ be the identity component,
 assume by contradiction that $T^+$ is not a maximal torus of $H$.
  Let $M=C_G(T^+)$ be the centralizer of 
$T^+$. The group $M$ is a Levi of $G$ on which $\sigma$ acts.

Consider the apartment of the extended affine building of $M$ that corresponds to $T$.
Either this apartment contains a $\sigma$ stable wall, or all $\sigma$ stable tori of $M$ are $H$ conjugate.
This follows from the fact that there is a gallery connecting any two chambers and
Lemma 5.5 of \cite{my}.   

  We can construct such maximal tori of $M$ that are not $H$ conjugate. 
We know that $T^+$ is not a maximal torus of $H$, and so there is a maximal torus $T'\subset H$
that contains $T^+$, in particular $T'\subset M$. We can find a maximal torus
$T''\subset M$ that is $\sigma$ stable and $T'\subset T''$. The tori $T$ and $T''$
are maximal tori of $M$ and they are not $H$ conjugate as the dimensions of their intersection
with $H$ are different.

Thus, there is a $\sigma$ stable wall in the apartment of $M$ that corresponds to $T$.
The group $M$ is a Levi of $G$, so it implies that there is a $\sigma$ stable wall $W\subset\A$.
We know that $\sigma(\C)=\C$ so $\sigma$ must preserve the half planes 
defined by $W$. Let $\alpha$ be a root constant on $W$. We have $U_\alpha\subset M$
because the wall $W$ came from a wall of the apartment of $M$.

The translation defined by $\alpha^\vee$, which is the 
translation perpendicular to $W$, commutes with $\sigma$. By Proposition 5.7 of 
\cite{my}, we may assume that this translation is in $T^+$, this 
contradicts $U_\alpha\subset C_G(T^+)$.

    \item Let $T_1,T_2$ be two maximal split tori of $G$, such that $T_1^+=(T_1\cap H)^0$ and 
 $T_2^+=(T_2\cap H)^0$ are maximal tori of $H$. The tori $T_1^+$ and $T_2^+$ 
are $H$ conjugate, we may assume that $T_1^+=T_2^+$. Denote this torus by $T^+$.
 Consider $M=C_G(T^+)$, $\sigma$ acts on $M$ and 
every fixed point of $\sigma$ on $M$ is an element of $C_H(T^+)=T^+$. Clearly 
$T_1,T_2\subset M$. Any maximal split torus $T\subset M$ contains $T^+$ and 
thus its $\sigma$ split part $T^-=\{t\in T|\sigma(t)=t^{-1}\}^0$ is a maximal 
$\sigma$ split torus of $M$. By Proposition 10.3 of \cite{Helminck} and the fact that $M$ 
is split we get that the $\sigma$ split parts of $T_1$ and $T_2$ are $T^+=H\cap M$ 
conjugate. This can only happen if $T_1=T_2$.
\end{enumerate}
\end{proof}

In the rest of this section we assume that both $G$ and $H$ are split over $\s$. 
Let $0$ be the hyper special vertex of $\B$ that correspond to $G(\s)$. Let 
$\A$ be the $\sigma$ stable apartment of minimal rank that contains $0$. 
The $H\cap G(\s)$ orbits of chambers that contain $0$ correspond to 
Borel orbits on $G(k)/H(k)$ (see Section 6 of \cite{my}).
The group $H$ is split over $\s$ so all Borel orbits on $G/H$ are defined over $k$.
This implies that there is a $\sigma$ stable chamber $\C\subset \A$ that contains $0$.  

We need the following.

\begin{prop}\label{Omega H is well defined}
    Let $\C$ be a $\sigma$ stable chamber, then 
$\Omega_\C=\{\omega(h)|h\in H,h\C=\C\}\subset \Omega$ does not depend on $\C$.

We denote this subgroup by $\Omega_H$.
\end{prop}

\begin{proof}
    By Proposition \ref{maximal H torus} we may assume that $\C\subset \A$. 
Then, by Proposition \ref{same apartment} below we are done.
\end{proof}

\begin{prop}\label{same apartment}
    Let $\C_1,\C_2$ be two $\sigma$ stable chambers in $\A$, let $h\in H$ such  
that $h\C_2=\C_2$ then there is $h'\in H$ such that $h'\C_1=\C_1$ and 
$\omega(h')=\omega(h)$.
\end{prop}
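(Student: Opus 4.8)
The plan is to reduce the statement to the case where $\C_1$ and $\C_2$ are \emph{adjacent} $\sigma$-stable chambers of $\A$, and then to move $h$ across a single wall. Since each $\C_i$ is $\sigma$-stable its barycenter lies in the fixed subspace $\A^\sigma$, which is a nonempty connected affine subspace; a path in general position in $\A^\sigma$ joining the two barycenters passes only through interiors of chambers of $\A$, and a chamber whose interior meets $\A^\sigma$ is $\sigma$-stable. Crossing the walls one at a time thus produces a gallery $\C_1=D_0,D_1,\dots,D_m=\C_2$ of $\sigma$-stable chambers of $\A$ with consecutive ones adjacent, so it suffices to treat adjacent $\sigma$-stable $\C_1,\C_2$ sharing a panel $f$. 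Since $\sigma$ fixes $\C_1$ and $\C_2$ it fixes $f$, hence the wall $W$ spanned by $f$; moreover $\sigma$ cannot interchange the two sides of $W$ (else $\sigma\C_1=\C_2\neq\C_1$), so the corresponding root $\alpha$ is $\sigma$-stable and $\sigma(u_\alpha(x))=u_\alpha(\pm x)$. By Proposition \ref{edge of size 1} the two sub-cases are exactly ``the hyper-edge at $f$ has size one'' and ``it has size $>1$''; note also that $\C_1$ has $l_\sigma$-length $0$, so Proposition \ref{maximal H torus} applies to $\A$.

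In the first sub-case $\sigma(u_\alpha(x))=u_\alpha(x)$ for all $x$. An involution of $\mathrm{SL}_2$ (or $\mathrm{PGL}_2$) fixing the unipotent radical of a Borel pointwise is trivial, so $\sigma$ restricts to the identity on the rank-one subgroup $L_\alpha=\langle U_\alpha,U_{-\alpha}\rangle$; in particular the reflection $n_W\in L_\alpha$ with $n_W\C_1=\C_2$ lies in $H$. Then $h':=n_W h n_W^{-1}\in H$ stabilises $\C_2$, and $\omega(h')=\omega(h)$ because $\Omega$ is abelian and $\omega$ is a homomorphism. This settles the first sub-case.

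The remaining sub-case, $\sigma(u_\alpha(x))=u_\alpha(-x)$, is the main difficulty: the reflection $n_W$ need not lie in $H$ (one has $\sigma(n_W)=n_W^{-1}$), so one cannot simply conjugate. The key observations are that $\omega(n_W)=1$, because $n_W$ represents an affine reflection and these lie in the kernel of $W_{aff}\to\Omega$, and that $z:=n_W^{2}=\alpha^\vee(-1)$ is a $\sigma$-fixed element of the Iwahori $I_{\C_2}$ fixing $\C_2$. I would then use the $\sigma$-equivariant extension $1\to I_{\C}\to \mathrm{Stab}_G(\C)\to\Omega_{\C}\to 1$, where $\Omega_{\C}$ is the rotation subgroup fixing $\C$ and $\omega|_{\mathrm{Stab}_G(\C)}$ is the composite of the projection with an isomorphism $\Omega_{\C}\cong\Omega$. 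Writing $\bar h\in\Omega_{\C_1}^{\sigma}$ for the image of $h$, so $\omega(h)=\omega(\bar h)$, conjugation by $n_W$ sends $\mathrm{Stab}_G(\C_1)$ to $\mathrm{Stab}_G(\C_2)$ and $I_{\C_1}$ to $I_{\C_2}$, hence induces an isomorphism $\phi\colon\Omega_{\C_1}\to\Omega_{\C_2}$ compatible with $\omega$; because the failure of $\sigma$-equivariance of this conjugation on $\mathrm{Stab}_G(-)$ is conjugation by the $\sigma$-fixed element $z\in I_{\C_2}$, it is trivial on the quotients and on $H^{1}(\langle\sigma\rangle,-)$, so $\phi$ is $\sigma$-equivariant and carries the lifting obstruction $\delta_1(\bar h)\in H^{1}(\langle\sigma\rangle,I_{\C_1})$ to $\delta_2(\phi(\bar h))\in H^{1}(\langle\sigma\rangle,I_{\C_2})$. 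Since $\bar h$ lifts to $h\in\mathrm{Stab}_H(\C_1)$ we have $\delta_1(\bar h)=0$, hence $\delta_2(\phi(\bar h))=0$, and $\phi(\bar h)$ lifts to some $h'\in\mathrm{Stab}_H(\C_2)$ with $\omega(h')=\omega(\phi(\bar h))=\omega(h)$, as required. The delicate point, and the step I expect to be the real work, is the bookkeeping showing that the discrepancy between conjugation by $n_W$ and $\sigma$ is inner by the $\sigma$-fixed $z$ and therefore invisible on cohomology; alternatively one can bypass the cohomology entirely by reducing this sub-case, via $L_\alpha$, to the rank-one symmetric spaces $\mathrm{SL}_2/T$ and $\mathrm{PGL}_2/N(T)$ and checking it directly, in the style of the proof of Proposition \ref{affine support function}.
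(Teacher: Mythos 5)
Your reduction to adjacent chambers is the first genuine gap. It is true that a $\sigma$-stable chamber has its barycenter in $\A^\sigma$ and that a chamber whose interior meets $\A^\sigma$ is $\sigma$-stable, but a path constrained to $\A^\sigma$ cannot in general be put in "general position" with respect to the walls of $\A$: several walls of $\A$ may have the same trace on $\A^\sigma$, so the path is forced through faces of codimension $\geq 2$ and you do not get a gallery of pairwise adjacent $\sigma$-stable chambers. This already fails in the group case, which is allowed in this section: for $G=G'\times G'$ with $\sigma$ the swap and $H=\Delta G'$, one has $\A=\A'\times\A'$, $\A^\sigma$ is the diagonal, the $\sigma$-stable chambers are exactly the chambers $\C'\times\C'$, and no two of them are adjacent (consecutive ones along $\A^\sigma$ meet in codimension $2$). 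A similar phenomenon occurs inside simple groups when $\A^\sigma$ passes through vertices. So your wall-crossing scheme, which only treats panels, does not cover the general situation without a substantially extended analysis of higher-codimension crossings.

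The second gap is in your sub-case $\sigma(u_\alpha(x))=u_\alpha(-x)$, which you correctly identify as the real work but do not actually carry out. Writing $h''=n_Whn_W^{-1}$, one has $\sigma(h'')=z^{-1}h''z$ with $z=n_W^2=\alpha^\vee(-1)$, and what you must produce is $a\in I_{\C_2}$ with $h''a\in H$, i.e. $a\sigma(a)^{-1}=h''^{-1}z^{-1}h''z$. The fact that $\mathrm{Int}(z)$ for $\sigma$-fixed $z$ acts trivially on $H^1(\langle\sigma\rangle,I_{\C_2})$ does not give this: the twist by $z$ enters the obstruction cocycle itself (it is the commutator $[h''^{-1},z^{-1}]$), it is not a conjugate of the trivial cocycle $\delta_1(\bar h)=0$, so "$\phi$ carries $\delta_1$ to $\delta_2$" is precisely what is unproved. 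The paper avoids both issues by a single global conjugation: using that every $\sigma$-stable chamber of $\A$ lies in the $H$-orbit of a chamber containing the hyperspecial point $0$, it picks $w_2$ in a lifted finite Weyl group fixing $0$ with $w_2\C_1=\C_2$, observes $\sigma(w_2)=t_2w_2$ with $t_2\in T^0$, $\sigma(t_2)=t_2^{-1}$, arranges $t_2\equiv 1$ mod the uniformizer and extracts a Hensel square root $t_2^{1/2}\in T^-$, and then checks directly that $h'=w_2^{-1}t_2^{-1/2}ht_2^{1/2}w_2$ is $\sigma$-fixed, stabilizes $\C_2$, and has $\omega(h')=\omega(h)$. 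In effect the paper solves explicitly, inside the torus, the coboundary equation your cohomological sketch leaves open; your alternative suggestion of a direct rank-one verification could plausibly replace that step, but only after the adjacency reduction is repaired.
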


\begin{proof}
    Let $T$ be the $\sigma$ stable maximal split torus corresponding to 
$\A$. We can lift the Weyl group $N_G(T)/T$ to a subgroup of $N_G(T)$ of 
element that fix $0\in\A$, denote this subgroup by $W$. 
    
First we show that any $\sigma$ stable chamber in $\A$ is in the $H$ orbit of a 
$\sigma$ stable chamber that contains $0$.

Let $\C\subset \A$ be $\sigma$ stable, we can find $t\in T$ and $\C_0\subset \A$ such that
$t\C_0=\C$ and $\C_0$ contains $0$. We have $t\C_0=\sigma(t\C_0)=\sigma(t)\sigma(\C_0)$,
equivalently $\sigma(t)^{-1}t\C_0=\sigma(\C_0)$. Both $\C_0$ and $\sigma(\C_0)$ contain $0$
and $\sigma(t)^{-1}t$ acts by translation on $\A$. This can only happen if $\sigma(t)^{-1}t$
acts trivially on $\A$. This means that the actions of $t$ and $\sigma$ on $\A$ commute.
By Proposition 5.7 of \cite{my} this implies that $t\in H$ and thus $\C_0$ is $\sigma$ stable
and $\C$ is in the same $H$ orbit as $\C_0$.

Therefore, it is enough to prove the result for $\C_1,\C_2$ two $\sigma$ stable
chambers that contain $0$. We can find $w_2\in W$ such that 
$w_2\C_1=\C_2$. Applying $\sigma$ we get that also $\sigma(w_2)\C_1=\C_2$, 
therefore we can find $t_2\in T^0$ such that $\sigma(w_2)=t_2w_2$. It implies that 
$\sigma(t_2)=t^{-1}_2$. We can assume that $t_2\in T^-$ is $1$ modulo the 
uniformaizer. 
Using Hensel's lemma we can find a square root of $t_2$ in $T^-$ 
(For more detail see the proof of Proposition 3.1 in \cite{my}). Denote this square root by 
$t^{\frac{1}{2}}_2$.

Let $h\in H$ such that $h\C_2=\C_2$. Consider 
$h'=w^{-1}_2t^{-\frac{1}{2}}_2ht^{\frac{1}{2}}_2w_2$, we claim that $h'\in H$. 
Computing we get that
    $\sigma(h')=w^{-1}_2t^{-1}_2t^{\frac{1}{2}}_2ht^{-\frac{1}{2}}_2t_2w_2=h'$. 
Clearly $\omega(h')=\omega(h)$ and $h'\C_1=\C_1$ so we are done.
\end{proof}

 We want to construct harmonic functions on $\Gamma_{aff,F}^1(X)$. By the same 
argument as in the proof of Proposition \ref{affine support function}, harmonic 
functions on $\Gamma_{aff,F}^1(X)$ can be constructed by specifying their 
values on the $H^1$ orbits on $\sigma$ stable chambers and extending 
using a function $s$ similar to the one constructed in Proposition \ref{affine 
support function}. There are finitely many $H^1$ orbits of $\sigma$ stable chambers.
 Denote the set of these orbits by $S'$ that are also full as
 vertices of $\Gamma_{aff,F}^1(X)$. Clearly $H$ acts on $S'$.

 \begin{prop}\label{harmonic function if charachter is good}
     Assume the following:
     \begin{enumerate}
        \item We have $S'\neq\emptyset$.
        \item For every $\sigma$ stable chamber $\C$ and $h\in H$ such that $h\C=\C$
           we have $\chi^{-1}\chi_0(\omega(h))=1$.
     \end{enumerate} 
     
     Then, there is a non-zero function $\phi\in\mathcal{H}(\Gamma^1_{aff,F}(X))$ 
such that for any chamber $\C$ and $h\in H$, 
$\phi(H^1h\C)=\chi^{-1}\chi_0(\omega(h^{-1}))\phi(H^1\C)$. 
 \end{prop}

 \begin{proof}
     We have a function $s:\Gamma^1_{aff,F}(X)\rightarrow \Z[S']$ like the one 
given by Proposition \ref{affine support function}. We 
define $\phi$ on $S'$ and then extend it to $\Gamma^1_{aff,F}(X)$. Choose 
$x\in S'$, for every $h\in H$ set $\phi(hx)=\chi^{-1}\chi_0(\omega(h^{-1}))$. This defines
$\phi$ on the $H$ orbit of $x$. We define $\phi$ to be zero on elements of $S'$ not in the 
$H$ orbit of $x$. The function $\phi$ is well defined on $S'$ by our assumptions. 

     Now extend $\phi$ to $\Z[S']$ and to $\Gamma^1_{aff,F}(X)$ by the function 
$s$.

     Clearly $\phi$ is harmonic and non-zero. We just have to check that for 
every $h\in H$ and every chamber $\C$, 
$\phi(H^1h\C)=\chi^{-1}\chi_0(\omega(h^{-1}))\phi(H^1\C)$.

     Notice that the map $s$ intertwines the action of $H$ on the vertices of 
     $\Gamma^1_{aff,F}(X)$ and the action of $H$ on $\Z[S']$.

     We have $s(H^1h\C)=hs(H^1\C)$. By the construction of $\phi$ and its values 
on $S'$ we have 
$\phi(H^1h\C)=\phi(hs(H^1\C))=\chi^{-1}\chi_0(\omega(h^{-1}))\phi(s(H^1\C))=\phi(H^1\C)$. 
 \end{proof}

 Now we deduce:

 \begin{theorem}\label{local field main}
     Let $G$ be a reductive group split over $\s$, let $H=G^\sigma$, and assume 
that $H$ is split over $\s$.  Let $\mathbf{Z}=Z(\mathbf{G})$ be the center of 
$\mathbf{G}$.  $\mathbf{G}/\mathbf{Z}$ can be written as a product of adjoint 
simple groups $\mathbf{G}/\mathbf{Z}=G_1\times...\times G_m$. 
     
     Let $\chi:G\rightarrow\mathbb{C}^\times$ be an unramified character, and 
let $St_\chi$ be the corresponding Steinberg representation.
    If the following conditions hold then $St_\chi$ is $H$-distinguished.
     \begin{enumerate}
         \item $X$ is quasi-split.
         \item There is no $\sigma$ invariant $G_i$, such that $G_i\cong 
PGL_{2n+1}$ and $G^\sigma_i\cong P(GL_n\times GL_{n+1})$.
         \item Let $\C$ be a chamber such that $l_\sigma(\C)=0$, 
$\chi^{-1}\chi_0$ is trivial on $\Omega_H$. 
     \end{enumerate}
 \end{theorem}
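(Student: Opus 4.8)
The plan is to deduce this theorem from four ingredients already in place: Proposition~\ref{distinction harmonic general}, the (unnamed) proposition stated immediately before the theorem, Proposition~\ref{Omega H is well defined}, and the results of Section~\ref{s7}. No new geometry is needed. By Proposition~\ref{distinction harmonic general}, it suffices to produce a non-zero $\phi \in \mathcal{H}(\Gamma^1_{aff,F}(X))$ with $\phi(H^1 h\C) = \phi(H^1\C)\,\chi^{-1}\chi_0(\omega(h^{-1}))$ for every chamber $\C$ and every $h \in H$. By the preceding proposition this follows from two statements: (i) the set $S$ of full $H^1$-orbits of chambers of $l_\sigma$-length $0$ is non-empty, and (ii) there is no chamber $\C$ with $l_\sigma(\C) = 0$ admitting an $h \in H$ fixing $\C$ with $\chi^{-1}\chi_0(\omega(h)) \neq 1$. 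So the entire proof reduces to extracting (i) and (ii) from hypotheses 1--3.

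Statement (ii) is immediate from Proposition~\ref{Omega H is well defined}: the subgroup $\Omega_H = \{\omega(h) : h \in H,\ h\C = \C\} \subset \Omega$ is the same for every chamber $\C$ with $l_\sigma(\C) = 0$, so hypothesis 3 --- triviality of $\chi^{-1}\chi_0$ on $\Omega_H$ --- says exactly that $\chi^{-1}\chi_0(\omega(h)) = 1$ whenever $h$ stabilises such a $\C$.

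Statement (i) carries the substance. When $G$ is simple it is supplied by the two propositions of Section~\ref{s7} treating the simple case: $G$ not of type $A_{2n}$ (proved there by the highest-root computation), and $G$ of type $A_{2n}$, where hypotheses 1--2 force the fixed subgroup to be the orthogonal-type one and a full chamber of $l_\sigma$-length $0$ is written down by hand. For general $G$ one writes $\mathbf{G}/\mathbf{Z} = G_1 \times \dots \times G_m$, uses that $\Gamma^0_{aff,F}(X)$ is unchanged on replacing $(G,H)$ by $(G/Z, H/Z)$, and decomposes the resulting hyper graph over the $\sigma$-orbits of the $G_i$ --- a $\sigma$-fixed $G_i$ giving a simple factor, a transposed pair giving a group-type space $K \times K / \Delta K$ --- exactly as in the final theorem of Section~\ref{s7}. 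Since $l_\sigma$ is additive over factors and a vertex of a product hyper graph is full iff every component is, $S \neq \emptyset$ for $X$ as soon as it holds for each factor; hypotheses 1 and 2 are precisely what exclude the one bad simple factor, and the group-type factors always carry full $l_\sigma = 0$ vertices. Passing between the $\Gamma^0$-picture of Section~\ref{s7} and the $\Gamma^1$-picture needed here costs nothing, as an $l_\sigma = 0$ chamber may be taken to have colour $1 \in \Omega$ and its incident hyper edges agree in both.

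With (i) and (ii) in hand the theorem follows by invoking the preceding proposition, so the proof proper is a short deduction. The main obstacle, such as it is, lies inside that proposition: one must check that the support function $s$ of Proposition~\ref{affine support function} carries over verbatim to $\Gamma^1_{aff,F}(X)$ --- that it takes values in $\Z[S]$, vanishes on non-full $l_\sigma = 0$ orbits, and is equivariant for the natural $H$-actions on $\Gamma^1_{aff,F}(X)$ and on $\Z[S]$ --- which reduces, as in Section~\ref{s7}, to the rank-$2$ case analysis. Granting this, $\phi$ is built by fixing a single orbit $Hx \subset S$, setting $\phi(hx) = \chi^{-1}\chi_0(\omega(h^{-1}))$ on it (well-defined precisely because (ii) holds) and $0$ on the other orbits of $S$, and extending through $s$; the transformation law for $\phi$ then drops out of $s(H^1 h\C) = s(H^1\C)$ together with $H$-equivariance.
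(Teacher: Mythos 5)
Your proposal is correct and takes essentially the same route as the paper, which states this theorem as a direct deduction ("Now we deduce") from Proposition \ref{distinction harmonic general}, the proposition immediately preceding the theorem, Proposition \ref{Omega H is well defined}, and the existence results of Section \ref{s7} (simple factors plus the product/group-case decomposition) --- precisely your reduction to statements (i) and (ii). The points you flag as needing care (the $H$-equivariant support function on $\Gamma^1_{aff,F}(X)$ and transporting full $l_\sigma=0$ vertices from the $\Gamma^0$-picture to the $\Gamma^1$-picture) are exactly the steps the paper leaves implicit, so your write-up matches its proof.
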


 \begin{proof}
    This follows from Propositions \ref{harmonic function if charachter is good}, \ref{distinction harmonic general}
    and (the proof of) Theorem \ref{Theorem chi_0}.

 \end{proof}
    
\end{section}

\begin{section}{Interpretation of the main results in terms of the dual 
group}\label{s9}
    Let $G^\vee$ be the Langlands dual of $G$. There are a few different 
definitions for $G_X^\vee$, the dual group of $X$. We follow the one given by 
Takeda for symmetric spaces in \cite{takeda}. In \cite{takeda} it is stated 
without a proof that their definition for $G_X^\vee$ is equivalent to the 
definition given in \cite{GaitsgoryNadler}. This definition is slightly
different from the one given in \cite{KS17}. In \cite{KS17} a map 
$\iota:SL_2\times G_X^\vee\rightarrow G^\vee$ is defined, this map is
in general not injective on $G_X^\vee$. The group defined in \cite{takeda}
is the image in $G^\vee$ of the group defined in \cite{KS17}.

Also in \cite{takeda} the map 
$\iota:SL_2\times G_X^\vee\rightarrow G^\vee$ is defined, it is injective 
on $G^\vee_X$. In this Section we identify the symmetric spaces $X$ such that the Langlands 
parameter of the Steinberg representation factors through $\iota$. 
        
    Let $\mathcal{U}$ be the variety of unipotent elements in $G^\vee$ and let 
$\mathcal{U}_X$ be the variety of unipotent elements in $G^\vee_X$.

    Let $W_F$ be the Weil-Deligne group of $F$. Let $\phi_{St}: W_F\times 
SL_2(\mathbb{C})\rightarrow G^\vee$ be the expected parameter of the  Steinberg 
representation under the expected local Langlands correspondence (see for 
example Section 1.4 of \cite{cunningham2024koszuldualitygeneralizedsteinberg}).

    Conjecture 16.5.1 of \cite{SV17}
implies that the distinction of the Steinberg representation should be detected 
by the factoring of $\phi_{St}$ through $\iota$. This condition can be stated
without assuming the Langlands correspondence as it is equivalent
to $\mathcal{U}_X$ intersecting the open $G^\vee$ orbit in $\mathcal{U}$.

In this section we prove the 
following:

    \begin{theorem}\label{theorem dual}
        Assume that $G$ and $H$ are split over $\s$. The representation
         $St_{\chi_0}$ is $H$ distinguished if and only if $\phi_{St}$ factors through $\iota$. 
    \end{theorem}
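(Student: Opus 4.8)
By Corollary \ref{main}, $St_{\chi_0}$ is $H$-distinguished if and only if (1) $X$ is quasi-split and (2) in $\mathbf{G}/\mathbf{Z}=G_1\times\dots\times G_m$ there is no $\sigma$-stable factor $G_i\cong PGL_{2n+1}$ with $G_i^\sigma\cong P(GL_n\times GL_{n+1})$. So it suffices to prove that (1) and (2) together are equivalent to $\phi_{St}$ factoring through $\iota$. The first step is a reduction to simple factors: the Steinberg parameter of a product is the product of the Steinberg parameters, Takeda's construction of $G^\vee_X$ and of $\iota:SL_2\times G^\vee_X\to G^\vee$ is compatible with the decomposition of $\mathbf{G}/\mathbf{Z}$ into $\sigma$-orbits of simple factors, and replacing $\mathbf{G}$ by $\mathbf{G}/\mathbf{Z}$ (or by its simply connected cover) does not change whether a parameter factors through $\iota$ once one keeps track of the central tori; conditions (1) and (2) are likewise read off the individual factors. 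Hence we may assume either $G$ adjoint simple with $\sigma$ trivial (then $X$ is a point, which is not quasi-split, and $St$ is not $H$-distinguished: both sides false), or $\sigma$ swapping two isomorphic adjoint simple factors (the group case $G_0\times G_0/\Delta G_0$, where $X$ is quasi-split, $G^\vee_X=\Delta G_0^\vee\subset G_0^\vee\times G_0^\vee$ with trivial $SL_2$ factor, $\phi_{St}=(\phi_{St,0},\phi_{St,0})$ visibly lies in the diagonal, and $St_{G_0}\boxtimes St_{G_0}$ is $\Delta G_0$-distinguished since $St_{G_0}$ is self-dual and irreducible: both sides true), or $G$ adjoint simple with $\sigma$ a non-trivial involution.

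The second step is to restate ``$\phi_{St}$ factors through $\iota$'' as a statement about nilpotent orbits. The parameter $\phi_{St}$ of $St_{\chi_0}$ is unramified, its restriction to $SL_2(\mathbb{C})$ is the principal $SL_2$ of $G^\vee$, and $\phi_{St}(\mathrm{Frob})$ lies in a maximal torus; twisting by $\chi_0$ only multiplies $\phi_{St}(\mathrm{Frob})$ by a central square, which is immaterial here. By Jacobson--Morozov together with $G^\vee$-conjugacy of principal $SL_2$'s, $\phi_{St}$ factors through a conjugate of $\iota(SL_2\times G^\vee_X)$ exactly when the regular nilpotent $e_{\mathrm{reg}}\in\mathfrak{g}^\vee$ lies in $d\iota(\mathfrak{n}_{SL_2}\oplus\mathfrak{n}_X)$ for some choice of nilpotents, that is, exactly when $\iota(\mathcal{U}_{SL_2}\times\mathcal{U}_X)$ meets the open $G^\vee$-orbit on $\mathcal{U}$. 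This is condition (1) of Theorem \ref{SV conjecture}, and it simultaneously establishes the equivalence (1)$\iff$(2) there; what remains is to match it with the combinatorial criterion of Corollary \ref{main}.

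The third step, and the heart of the argument, is the case check for $G$ adjoint simple with $\sigma$ a non-trivial involution. I would run through Helminck's classification of such pairs and, using Takeda's explicit $G^\vee_X$ and $\iota$, verify in each case whether $\iota(\mathcal{U}_{SL_2}\times\mathcal{U}_X)$ contains a regular unipotent of $G^\vee$. For a quasi-split pair the Arthur $SL_2$ is trivial and $G^\vee_X$ has the same rank as $G^\vee$; in all such cases except the type $A_{2n}$ one, $G^\vee_X$ contains a regular unipotent of $G^\vee$ (for instance because $\iota(G^\vee_X)$ contains a Borel subgroup of $G^\vee$), so $\phi_{St}$ factors, matching distinction. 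For the exceptional pair $G=PGL_{2n+1}$, $G^\sigma\cong P(GL_n\times GL_{n+1})$, one has $G^\vee=SL_{2n+1}$ while $G^\vee_X$ is of type $C_n$, namely $Sp_{2n}$ embedded through $\mathbb{C}^{2n+1}=\mathbb{C}^{2n}\oplus\mathbb{C}$; every unipotent in $\iota(G^\vee_X)$ fixes the split line $\mathbb{C}$, hence has Jordan type with at least two blocks, never the single block $(2n+1)$, so $\phi_{St}$ does not factor --- exactly when condition (2) fails. For non-quasi-split pairs, where Corollary \ref{main} already gives non-distinction, I would close the equivalence by checking that $G^\vee_X$ together with its Arthur $SL_2$ is then too small to reach the regular orbit --- concretely that $\mathrm{rank}\,G^\vee_X$ plus the rank of the image of the Arthur $SL_2$ falls short of $\mathrm{rank}\,G^\vee$, since the restricted root datum of $X$ degenerates --- or, if one prefers, one reads this off pair by pair.

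The main obstacle is precisely this last case analysis. The theorem's condition is not ``$G^\vee_X$ contains a regular unipotent $\iff$ $X$ quasi-split'': the pair $PGL_{2n+1}/P(GL_n\times GL_{n+1})$ is itself a quasi-split pair whose dual group fails to contain a regular unipotent, so there is no purely formal shortcut. One genuinely needs the explicit shape of $G^\vee_X$ and of the image of $\iota$ on nilpotents for each simple symmetric pair, together with a uniform reason (a Borel-in-the-image argument, or an $X$-principal-$SL_2$ argument) that the quasi-split non-$A_{2n}$ cases do reach the regular orbit. A secondary technical point is the bookkeeping with isogenies and central tori when passing between $\mathbf{G}$, its adjoint form and its simply connected cover, since $G^\vee_X$ and $\iota$ depend on the isogeny class although the factorization property of $\phi_{St}$ does not.
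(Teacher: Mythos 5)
Your reduction to a nilpotent-orbit statement and the two easy extreme cases are in the spirit of the paper, but the heart of the theorem is exactly the part you leave as a plan, and the shortcut you propose for it does not work. The paper does not run through Helminck's classification of simple symmetric pairs at all: it first proves quasi-splitness is \emph{necessary} by a short uniform argument (any semisimple element in the image of the Arthur $SL_2$ commutes with the regular unipotent $\phi_{St}\begin{pmatrix}1&x\\0&1\end{pmatrix}$, hence is central, and since $\iota$ on the Arthur torus is the coroot $2\rho_0=\sum_{\sigma(\alpha)=\alpha}\alpha$, this forces $\Delta_0=\emptyset$), which replaces your unproved rank-counting sketch for the non-quasi-split direction. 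For sufficiency it then argues simple root by simple root: when $\Delta_0=\emptyset$ each root's rank-one diagram is one of only three types in Takeda's table, and Takeda's explicit root subgroups of $G^\vee_X$ (types $1$, $2$, $3$) supply, for each $\alpha\in\Delta$, an element of $\mathcal{N}_X$ with nonzero component on the $\alpha$-root space of $\mathfrak{g}^\vee$; summing gives a regular nilpotent in $\iota(\mathcal{N}_X)$, and the only excluded diagram ($A_2$ with $\sigma^*$ swapping the two vertices) is precisely the $PGL_{2n+1}/P(GL_n\times GL_{n+1})$ situation, where $G^\vee_X=Sp_{2n}\subset SL_{2n+1}$ visibly contains no regular unipotent. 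Your proposed uniform reason for the quasi-split non-$A_{2n}$ cases --- that $\iota(G^\vee_X)$ contains a Borel subgroup of $G^\vee$ --- is false in general (already in the group case $G_0\times G_0/\Delta G_0$ the diagonal $G_0^\vee$ contains regular unipotents but no Borel of $G_0^\vee\times G_0^\vee$), so without the root-by-root construction (or an actual completed case check) your argument has a genuine hole where the theorem's content lies.

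Two smaller points. First, your equivalence ``$\phi_{St}$ factors through $\iota$ iff the regular nilpotent lies in $d\iota(\mathfrak{n}_{SL_2}\oplus\mathfrak{n}_X)$'' needs the semisimple part as well: the paper completes the factorization by noting that the infinitesimal character of $St$ equals that of the trivial representation, whose parameter factors through $\iota$ by Takeda's Theorem 8.2, and only then applies Jacobson--Morozov to extend to all of $SL_2(\mathbb{C})$; your one-line appeal to conjugacy of principal $SL_2$'s can be repaired along these lines but is not automatic as stated. Second, the paper's criterion is that $\iota(\mathcal{U}_X)$ (not $\iota(\mathcal{U}_{SL_2}\times\mathcal{U}_X)$) meets the open orbit; the two agree only after one knows the Arthur $SL_2$ is central, i.e.\ after the quasi-splitness step you left open.
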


    \begin{Remark}
        Notice that in Theorem \ref{theorem dual}, the parameter is the 
parameter of $St=St_1$ and not of $St_{\chi_0}$
    \end{Remark}

    Before we present the proof we recall some properties of $G^\vee_X$ and 
$\iota$ from \cite{takeda}.

    \begin{subsection}{Properties of $G^\vee_X$}

    Let $T$ be a maximal $\sigma$ stable split torus of $G$ such that 
$T^-=\{t\in T|\sigma(t)=t^{-1}\}^0$ is a maximal split torus on which $\sigma$ 
acts as inverse.

    Let $R$ be the set of roots of $G$ with respect to $T$ and let $\Delta$ be 
a set of simple roots of $R$. The set $R$ can be chosen such that if $\alpha$ is a positive root 
and $\sigma(\alpha)\neq \alpha$ then $\sigma(\alpha)$ is negative (see Subsection
3.4 of \cite{takeda}). Let $\Delta_0\subset \Delta$ be the set of simple roots that are fixed by $\sigma$.

    \begin{prop}[Theorem 8.2 of \cite{takeda}]\label{iota defined}
        Let $2\rho_0=\sum_{\alpha\in\Delta_0}\alpha$.

    The image of $\iota$ on the torus of $SL_2(\mathbb{C})$ is the image of the 
$G^\vee$ coroot $2\rho_0:\mathbb{G}_m\rightarrow G^\vee$.
    \end{prop}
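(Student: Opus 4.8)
This is \cite[Prop.~7.1]{takeda}, so the quickest route is simply to invoke that reference; I indicate how one would argue directly. The plan is to unwind the construction of $\iota$ from \cite[\S\S 3--7]{takeda} and reduce the assertion to an identification of one‑parameter subgroups of $T^\vee$. By that construction $\iota|_{SL_2}$ is the $SL_2$ attached to a distinguished $\mathfrak{sl}_2$‑triple $(e,h,f)$ in $\mathrm{Lie}(G^\vee)$, and the image of the diagonal torus of $SL_2(\mathbb{C})$ under $\iota$ is exactly the image of the cocharacter $h\colon \mathbb{G}_m\to T^\vee$. Under the canonical identification $X_*(T^\vee)=X^*(T)$ it then suffices to show that $h$ lies on the line spanned by $2\rho_0=\sum_{\alpha\in\Delta,\ \sigma(\alpha)=\alpha}\alpha$ in $X^*(T)\otimes\Q$, and (when that line is nonzero) that $2\rho_0$ is a positive multiple of a coroot of $G^\vee$, i.e.\ of a root of $G$.

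The first step is to pin down $h$. For a symmetric space the semisimple element of this triple is controlled by the $\sigma$‑fixed part of the root system: it is the $h$‑element of the principal $\mathfrak{sl}_2$ of the subgroup $M^\vee\subseteq G^\vee$ dual to the Levi $M=Z_G(T^-)$ of $G$, whose roots are precisely the roots of $G$ fixed by $\sigma$ (equivalently, those trivial on $T^-$, since $\sigma$ acts on $T^-$ by inversion). Concretely this identifies $h$ with the sum of the positive roots of $M$, viewed in $X^*(T)$. One sees this by comparing the grading that $h$ defines on $\mathrm{Lie}(G^\vee)$ with the grading used in \cite{takeda} to cut out $G_X^\vee$ as a centralizer: the $G_X^\vee$‑directions sit in degree $0$, so $h$ is forced to be the distinguished element of the complementary $\sigma$‑fixed sub‑datum, which is principal there.

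The second step is the combinatorial identity. I would invoke the classification of split symmetric pairs \cite{Helminck_k_involutions, Springer1987TheCO}: after the reduction to adjoint simple groups as in the proof of Proposition~\ref{support function}, the Levi $M=Z_G(T^-)$ has only factors of type $A$ and the $\sigma$‑fixed simple roots form a single connected string; inside a type‑$A_r$ factor the sum of the positive roots is a positive integer multiple of the sum of the simple roots, and the latter is the highest root of that factor, hence a root of $G$. Combining with the first step gives that $h$ lies on the line spanned by $2\rho_0$ and that $2\rho_0$ is a root of $G$, i.e.\ a coroot of $G^\vee$; this is exactly the claimed equality of images of one‑parameter subgroups in $T^\vee$. (When there are no $\sigma$‑fixed simple roots, both $h$ and $2\rho_0$ vanish and $\iota|_{SL_2}$ is trivial, so the statement is vacuous.)

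The main obstacle is bookkeeping rather than mathematics. One has to match the normalizations in \cite{takeda} — the choice of $\Delta$ with $\sigma$ carrying positive non‑fixed roots to negatives, the factor of $2$ implicit in ``$h$'' versus a coroot cocharacter, and the identification $X_*(T^\vee)\cong X^*(T)$ — with the conventions used here, and one has to feed in enough structure theory to know that $M=Z_G(T^-)$ has only type‑$A$ factors, without which ``sum of simple roots'' and ``sum of positive roots of $M$'' diverge and the coroot assertion can fail. An alternative that avoids the structure theory is to verify the identity case by case along the classification of adjoint simple split symmetric pairs, in the same spirit as the proof of Proposition~\ref{support function}: this is a finite list and each case reduces to an elementary root‑system computation.
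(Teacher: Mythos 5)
The paper contains no argument for this statement: it is recorded as a quotation of 7.1 of \cite{takeda}, so your fallback position of simply invoking that reference coincides exactly with what the paper does. The problem is with the direct argument you sketch, which has a genuine gap and would not establish the statement.

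Concretely, your first step identifies $\iota|_{SL_2}$ with the principal $SL_2$ of $M^\vee$, where $M=Z_G(T^-)$ is the Levi whose roots are the $\sigma$-fixed roots; under that identification the image of the $SL_2$-torus is the image of the cocharacter $2\rho_M=\sum_{\beta>0,\ \sigma(\beta)=\beta}\beta$ (the sum of \emph{all} positive $\sigma$-fixed roots), not of $2\rho_0=\sum_{\alpha\in\Delta_0}\alpha$. Your second step is meant to bridge this by a proportionality argument, but both of its ingredients are false: in type $A_r$ with $r\geq 3$ the sum of the positive roots (e.g.\ $3\alpha_1+4\alpha_2+3\alpha_3$ in $A_3$) is not a multiple of $\alpha_1+\cdots+\alpha_r$; and $M$ need not have only type-$A$ factors, nor is $\Delta_0$ a single connected string, even for split pairs --- for $SO_{2n+1}/SO_{2n}$ (with $H$ split) the $\sigma$-fixed subsystem is of type $B_{n-1}$, and for $SL_{2n}/Sp_{2n}$ it is a disconnected union of $A_1$'s. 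Note also that the proposition does not claim that $2\rho_0$ is an honest root of $G$: ``the $G^\vee$ coroot $2\rho_0$'' only means the one-parameter subgroup of $T^\vee$ determined by $2\rho_0\in X^*(T)=X_*(T^\vee)$, and in the disconnected examples $2\rho_0$ is proportional to no root, so the extra assertion you try to prove is both unnecessary and false. Since $\Delta_0=\emptyset$ exactly in the quasi-split case (where both sides are trivial), your argument breaks down precisely in the only nontrivial regime; reconciling the principal-$SL_2$ description with the formula $\sum_{\alpha\in\Delta_0}\alpha$ is exactly the normalization content that has to be extracted from Takeda's actual construction of $\iota$ (or checked case by case over the full list of anisotropic kernels, not only type $A$), and your sketch does not supply it.
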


    \begin{prop}[Lemma 3.2 of \cite{takeda}]
    Assume that the root system of $G$ is irreducible. Let $w_0$ be the longest 
element of the Weyl group. Consider $\sigma^*:R\rightarrow R$ defined by 
$\sigma^*(\alpha)=-\sigma(w_0(\alpha))$.  Then $\sigma^*$ is an automorphism of 
the Dynkin diagram of $G$ such that for every 
$\alpha\in\Delta\backslash\Delta_0$ we have $\sigma(\alpha)+\sigma^*(\alpha)\in 
\Z\Delta_0$
        
    \end{prop}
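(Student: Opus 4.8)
The plan is to translate the claim into elementary linear algebra on $V=X^*(T)\otimes\Q$, the only nonformal input being the structure of the $\sigma$-action relative to the Levi $M=Z_G(T^-)$. Two facts should be recorded first, both coming from the choice of $T^-$ and $\Delta$ (the latter being $3.4$ of \cite{takeda}): (a) $\sigma$ acts as $-1$ on $X^*(T^-)\otimes\Q$, so for every root $\beta$ the $T^-$-restriction of $\sigma(\beta)$ equals $-\beta|_{T^-}$; (b) $\Delta_0=\{\alpha\in\Delta:\sigma(\alpha)=\alpha\}$ is a base of the root subsystem $R_M$ of $M$, $\sigma$ fixes $R_M$ pointwise, and $\sigma$ carries every positive root outside $R_M$ to a negative root. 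From (a) and (b) one extracts the partition that drives everything: $\sigma$ fixes $R_M^+$, and it interchanges $R^+\setminus R_M^+$ with $R^-\setminus R_M^-$ — a positive root outside $R_M$ has nonzero positive $T^-$-restriction, so its $\sigma$-image has negative $T^-$-restriction and lies in $R^-\setminus R_M^-$, and $\sigma^2=1$ makes this a bijection — so that $\sigma(R^+)=R_M^+\sqcup(R^-\setminus R_M^-)$.

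Throughout I take $w_0$ to be the (intended) longest element of the Weyl group $W_{\Delta_0}=W(R_M)$ of $M$. For the first assertion: $\sigma^*$ is a composition of automorphisms of $R$ — the Weyl element $w_0$, the linear action of $\sigma$, and $\lambda\mapsto-\lambda$ — hence it is an automorphism of the root system $R$, and it suffices to check that it preserves $R^+$ (then $\sigma^*(\Delta)=\Delta$, and an automorphism of $R$ stabilising a base is a Dynkin diagram automorphism). Since every $w\in W_{\Delta_0}$ changes a root only by an element of $\Z\Delta_0$, it leaves the $\Delta\setminus\Delta_0$-coefficients of every root unchanged; hence $W_{\Delta_0}$ preserves $R^+\setminus R_M^+$ (and $R^-\setminus R_M^-$), while $w_0$ sends $R_M^+$ to $R_M^-$. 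Therefore
$$\sigma^*(R^+)=-\,\sigma\big(w_0(R^+)\big)=-\,\sigma\big(R_M^-\sqcup(R^+\setminus R_M^+)\big)=-\big(R_M^-\sqcup(R^-\setminus R_M^-)\big)=-R^-=R^+,$$
using (b) once more for $\sigma(R_M^-)=R_M^-$ and $\sigma(R^+\setminus R_M^+)=R^-\setminus R_M^-$. This proves $\sigma^*$ is a Dynkin diagram automorphism of $G$.

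For the congruence I would simply compute, for any $\alpha$,
$$\sigma(\alpha)+\sigma^*(\alpha)=\sigma(\alpha)-\sigma\big(w_0(\alpha)\big)=\sigma\big((\mathrm{id}-w_0)(\alpha)\big).$$
Factoring $w_0$ as a product of simple reflections $s_\gamma$ with $\gamma\in\Delta_0$, and using $s_\gamma(v)=v-\langle v,\gamma^\vee\rangle\gamma$ with $\langle v,\gamma^\vee\rangle\in\Z$ for $v\in X^*(T)$, one gets $(\mathrm{id}-w_0)(\alpha)\in\Z\Delta_0$; since $\sigma$ fixes $\Delta_0$ pointwise it preserves $\Z\Delta_0$, so $\sigma(\alpha)+\sigma^*(\alpha)\in\Z\Delta_0$. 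This holds for every $\alpha$, in particular for $\alpha\in\Delta\setminus\Delta_0$.

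The diagram-automorphism bookkeeping above is routine; the step that needs genuine care — and that I regard as the main (though not deep) obstacle — is the first one: pinning down, from the choice of base in $3.4$ of \cite{takeda} and the maximality of $T^-$, that $\sigma$ fixes $R_M$ pointwise and reverses the $T^-$-restriction of all remaining roots, and relatedly identifying which longest Weyl element is in play so that $\sigma^*$ actually stabilises $\Delta$ rather than merely permuting the positive systems of $R$. With those points settled, both assertions reduce formally to the partition $\sigma(R^+)=R_M^+\sqcup(R^-\setminus R_M^-)$ and to the observation that $\mathrm{id}-w_0$ has image in $\Z\Delta_0$.
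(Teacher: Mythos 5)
The paper does not prove this proposition at all: it is quoted (as a known result) from Lemma 3.2 of \cite{takeda}, so your argument has to stand on its own, and it does. The key judgment call you made is also the right one: as literally transcribed, with $w_0$ the longest element of the full Weyl group of $G$, the statement is false --- take $\sigma$ acting on $T$ by inversion (e.g.\ $PGL_3/PO_3$), so $\Delta_0=\emptyset$ and $\sigma^*(\alpha)=-\sigma(w_0(\alpha))=w_0(\alpha)$ is a negative root, while $\sigma(\alpha)+\sigma^*(\alpha)=w_0(\alpha)-\alpha\neq 0=\Z\Delta_0$. The intended $w_0$ is the longest element of $W_{\Delta_0}=W(R_M)$, exactly as you assumed, and with that reading your proof is the standard Satake--Helminck argument, carried out correctly and completely: $R_M$ coincides with the set of $\sigma$-fixed roots, the paper's condition on $\Delta$ forces $\Delta_0$ to be a base of $R_M$ and yields the partition $\sigma(R^+)=R_M^+\sqcup(R^-\setminus R_M^-)$, from which $\sigma^*(R^+)=R^+$ and hence $\sigma^*(\Delta)=\Delta$; and the congruence is the one-line identity $\sigma(\alpha)+\sigma^*(\alpha)=\sigma\bigl((\mathrm{id}-w_{0,\Delta_0})(\alpha)\bigr)\in\Z\Delta_0$. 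One cosmetic point: the phrase ``nonzero positive $T^-$-restriction'' tacitly presupposes an ordering on $X^*(T^-)\otimes\Q$ compatible with the chosen positivity on $X^*(T)$, which is how a $\sigma$-basis is usually constructed but is not literally among the paper's stated hypotheses; you can drop restrictions altogether, since a positive root outside $R_M$ is not $\sigma$-fixed, hence is sent to a negative root by the stated choice of $\Delta$, and its image cannot lie in $R_M^-$ because roots of $R_M$ are $\sigma$-fixed. With that small substitution the argument uses only the hypotheses as quoted.
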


    \begin{defn}[Definition 4.1 of \cite{takeda}]\label{diagram}
        Let $\alpha\in\Delta\backslash \Delta_0$, define 
$\Phi_\alpha=\Z(\Delta_0\cap\{\alpha,\sigma(\alpha)\})\cap R$. 

        Define the Dynkin diagram of $\alpha$ to be the Dynkin diagram of the 
irreducible component of $\Phi_\alpha$ that contains $\alpha$. We consider the 
Dynkin diagram with roots in $\Delta_0$ colored black and the action of 
$\sigma^*$ drawn on the diagram.

        These diagrams were classified in \cite{takeda} and appear in Table 1 
of \cite{takeda}.
    \end{defn}

    We are interested in diagrams with no black vertices. Such diagrams from 
Table 1 of \cite{takeda} appear in Table \ref{T:table}, the numbering is taken 
from \cite{takeda}. 

    \pgfkeys{/Dynkin diagram, edge-length=.7cm, root-radius=.07cm}

\begin{table}[h]
\caption{Rank one involutions with no black vertices}\label{T:table}
\bigskip{}
\centering
\renewcommand{\arraystretch}{1.5}
\begin{tabular}{cclcc}
\hline
No. & $\Phi$ & Diagram & Type 
\\\hline\hline
1 &$A_1\times A_1$  & \dynkin[labels={,}, involutions={1<above>[\sigma^*]2}, 
edge/.style={white}]A{oo} & 3   \\
2 & $A_1$ & \dynkin A{o} & 2 \\
5 & $A_2$ & \dynkin[involutions={1<above>[\sigma^*]2}]A{oo} & 1\\\hline

\end{tabular}
\end{table}

    There are four options for each simple root $\alpha\in\Delta$ (see Subsection 4.3 of 
\cite{takeda}):

    \begin{itemize}
        \item $\sigma(\alpha)=\alpha$, i.e. $\alpha\in\Delta_0$.
        \item $\alpha-\sigma(\alpha)$ is a root. These roots are called roots of type 1.
        \item $\frac{1}{2}(\alpha-\sigma(\alpha))$ is a root. These roots are called 
roots of type 2.
        \item None of the above holds. These roots are called roots of type 3. 
    \end{itemize}

    For a root $\alpha$ of type 3 there is a unique pair of 
    roots $\alpha_1,\alpha_2$ characterized by 
Proposition 5.2 of \cite{takeda}, one of the conditions is 
$\alpha-\sigma(\alpha)=\alpha_1+\alpha_2$, the second condition is that $\alpha_1,\alpha_2$ 
are strongly orthogonal.

    \begin{prop}[Subsection 5.3 of \cite{takeda}]\label{roots of G_X}
        Let $\alpha\in\Delta$ such that $\sigma(\alpha)\neq\alpha$. The group 
$G^\vee_X$ contains the following subgroups:
        \begin{enumerate}
            \item $\{u_{\alpha-\sigma(\alpha)}(x)|x\in\mathbb{C}\}$ if $\alpha$ 
is of type 1.
            \item 
$\{u_{\frac{1}{2}(\alpha-\sigma(\alpha))}(x)|x\in\mathbb{C}\}$ if $\alpha$ is 
of type 2.
            \item $\{u_{\alpha_1}(x)u_{\alpha_2}(x)|x\in\mathbb{C}\}$ if 
$\alpha$ is of type 3.
        \end{enumerate}
    \end{prop}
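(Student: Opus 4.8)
The plan is to deduce this directly from Takeda's construction of $G^\vee_X$ and the embedding $\iota$ in \cite{takeda}, by reducing to the rank-one cases of his classification (the diagrams of Definition \ref{diagram}). The key point is that both $G^\vee_X$, as a reductive subgroup of $G^\vee$, and the restriction of $\iota$ to its root subgroups are assembled one $\sigma$-orbit of simple roots at a time: for each $\alpha\in\Delta\setminus\Delta_0$ the diagram of $\alpha$ together with the $\sigma^*$-action on it determines a rank-one symmetric datum, and the simple root subgroup of the associated rank-one factor of $G^\vee_X$ is carried by $\iota$ into $G^\vee$ according to that datum. So the first step is to record this reduction, after which it suffices to compute the image of $\iota$ on the simple root subgroup in each rank-one case.

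In the cases with $\sigma(\alpha)\neq\alpha$ the diagram of $\alpha$ is $A_2$, $A_1$ or $A_1\times A_1$ (possibly with black vertices), which are exactly the roots of type $1$, $2$ and $3$. For a type $1$ root, $\iota$ embeds the rank-one factor of $G^\vee_X$ as the rank-one subgroup of $G^\vee$ generated by $u_{\pm(\alpha-\sigma(\alpha))}$ --- the ``$(\alpha-\sigma(\alpha))$-corner'' $SL_2$ or $PGL_2$ --- whose positive root subgroup is $\{u_{\alpha-\sigma(\alpha)}(x)\mid x\in\mathbb{C}\}$. For a type $2$ root, $\tfrac12(\alpha-\sigma(\alpha))$ is itself a root of $G$ and $\iota$ embeds the rank-one factor along it, giving $\{u_{\frac12(\alpha-\sigma(\alpha))}(x)\mid x\in\mathbb{C}\}$. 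For a type $3$ root the diagram is $A_1\times A_1$ with $\sigma^*$ interchanging the two factors, $\alpha-\sigma(\alpha)=\alpha_1+\alpha_2$ for the distinguished pair $\alpha_1,\alpha_2$ of Proposition 5.2 of \cite{takeda}, and $\iota$ embeds the rank-one factor diagonally, so its root subgroup is $\{u_{\alpha_1}(x)u_{\alpha_2}(x)\mid x\in\mathbb{C}\}$. Running over all $\alpha\in\Delta\setminus\Delta_0$ yields the asserted list.

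The step I expect to require the most care is the normalisation: verifying that in each case the parameter can be taken to be $x$ with coefficient exactly $1$ and no sign twist. This is where the explicit pinnings fixed for $\iota$ in \cite{takeda} must be used --- for type $1$ one must rule out a spurious factor of $2$ arising in the folding of $A_2$ (from $e_\alpha+e_{-\sigma(\alpha)}$ versus $e_{\alpha-\sigma(\alpha)}$) and control the relevant structure constant, and for type $3$ one checks that the two $A_1$-factors are pinned so that the diagonal is $u_{\alpha_1}(x)u_{\alpha_2}(x)$ rather than $u_{\alpha_1}(x)u_{\alpha_2}(cx)$ with $c\neq1$. A minor additional point is bookkeeping for the black vertices: when $\Phi_\alpha$ is strictly larger than the subsystem spanned by $\{\alpha,\sigma(\alpha)\}$ one passes to reductions modulo $\mathbb{Z}\Delta_0$, but the elements $\alpha-\sigma(\alpha)$, $\tfrac12(\alpha-\sigma(\alpha))$ and the pair $\alpha_1,\alpha_2$ named in the statement are precisely those reductions, so the formulas are unchanged.
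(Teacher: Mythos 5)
The paper does not actually prove this proposition: it is quoted from subsection 5.3 of \cite{takeda}, and your proposal is in substance an outline of Takeda's own construction of $G^\vee_X$ and $\iota$ (orbit-by-orbit over $\Delta\setminus\Delta_0$, rank-one diagrams, then the normalisation of pinnings), so it follows the same route as the source the paper relies on. The normalisation and structure-constant checks you flag as the delicate step are precisely what is carried out in \cite{takeda}, so your sketch is consistent with how the statement is established there.
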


    \end{subsection}

    \begin{subsection}{Proof of Theorem \ref{theorem dual}}
        
    We recall how $\phi_{St}$ is expected to behave, see for 
example Lemma 1.8 in \cite{cunningham2024koszuldualitygeneralizedsteinberg}.

    Let $\lambda_{St}:W_F\rightarrow G^\vee$ be the infinitesimal character 
of $St$, by definition  $\lambda_{St}(w)=\phi_{St}(w,\begin{pmatrix}
        |w|^\frac{1}{2} & 0  \\
        0 & |w|^{-\frac{1}{2}}
        \end{pmatrix})$.

    \begin{prop}[See Lemma 1.8 of 
\cite{cunningham2024koszuldualitygeneralizedsteinberg}]\label{parameter of 
steinberg}
        The parameter $\phi_{St}:W_F\times SL_2(\mathbb{C})\rightarrow G^\vee$ 
satisfies the following:
        \begin{enumerate}
            \item $\phi_{St}$ is trivial on $W_F$.
            \item $\phi_{St}$ maps $\begin{pmatrix}
        1 & x  \\
        0 & 1
        \end{pmatrix}$ to the open $G^\vee$ orbit of $\mathcal{U}$.
        \item The infinitesimal character $\lambda_{St}$ is the same as the 
infinitesimal character of the trivial representation.
        \end{enumerate}
    \end{prop}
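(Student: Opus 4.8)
The plan is to deduce all three assertions from the description of $St$ recalled in Section~\ref{s2}: the module $St^I$ over the Iwahori--Hecke algebra $\mathcal{H}(G,I)$ is one-dimensional, with each generator $T_s$ acting by $-1$, i.e.\ it is the sign character. Via the Borel equivalence \cite{Borel1976} together with the Kazhdan--Lusztig realization of the Deligne--Langlands correspondence for the affine Hecke algebra with equal parameters, the sign character corresponds to a Kazhdan--Lusztig datum $(s,N)$ (with trivial local system) in which $N\in\mathfrak{g}^\vee$ is a \emph{regular} (principal) nilpotent and $s\in G^\vee$ is semisimple with $\mathrm{Ad}(s)N=qN$; regularity of $N$ forces $s$ to be $G^\vee$-conjugate to $\kappa(\mathrm{diag}(q^{1/2},q^{-1/2}))$, where $\kappa\colon SL_2(\mathbb{C})\to G^\vee$ is the principal $SL_2$ attached to $N$.

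First I would recover $\phi_{St}$ from $(s,N)$ by the standard recipe: $\phi_{St}$ is trivial on the inertia subgroup, $\phi_{St}|_{SL_2(\mathbb{C})}=\kappa$, and $\phi_{St}(\mathrm{Frob},1)=s\cdot\kappa\!\left(\mathrm{diag}(q^{-1/2},q^{1/2})\right)$. Choosing the representative $s=\kappa(\mathrm{diag}(q^{1/2},q^{-1/2}))$ gives $\phi_{St}(\mathrm{Frob},1)=1$, hence $\phi_{St}$ is trivial on all of $W_F$; this is~(1). For~(2), the element $\phi_{St}\!\left(\begin{smallmatrix}1&x\\0&1\end{smallmatrix}\right)=\exp(xN)$ is a regular unipotent of $G^\vee$ whenever $x\neq 0$, and the regular unipotent class is precisely the open $G^\vee$-orbit in $\mathcal{U}$.

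For~(3) I would compute the infinitesimal parameter directly: $\lambda_{St}(\mathrm{Frob})=\phi_{St}\!\left(\mathrm{Frob},\mathrm{diag}(q^{1/2},q^{-1/2})\right)=\phi_{St}(\mathrm{Frob},1)\cdot\kappa\!\left(\mathrm{diag}(q^{1/2},q^{-1/2})\right)=\kappa\!\left(\mathrm{diag}(q^{1/2},q^{-1/2})\right)$, which equals $(2\rho^\vee)(q^{1/2})$ modulo the center of $G^\vee$. On the other hand, the trivial representation of $G$ is the unramified Langlands quotient of $i_B^G(\delta_B^{1/2})$, so its parameter is trivial on $SL_2(\mathbb{C})$ and its infinitesimal parameter sends $\mathrm{Frob}$ to the class of $(2\rho^\vee)(q^{-1/2})$; since $-2\rho^\vee=w_0(2\rho^\vee)$ with $w_0$ the longest Weyl element, these two semisimple conjugacy classes coincide, which proves~(3). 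Conceptually, (3) reflects the fact that the Iwahori--Matsumoto involution of $\mathcal{H}(G,I)$ interchanges the trivial and sign characters while preserving central, hence infinitesimal, characters.

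The only genuinely delicate point I anticipate is the bookkeeping of normalizations: arithmetic versus geometric Frobenius, the $\rho$-shift in the Satake isomorphism, and $\delta_B^{1/2}$ versus $\delta_B^{-1/2}$ in the parabolic induction. All of these are absorbed by the $w_0$-conjugacy observation above; for the fully normalized statement in the form used here I would cite Lemma~1.8 of \cite{cunningham2024koszuldualitygeneralizedsteinberg}.
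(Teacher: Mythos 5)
Your proposal is correct in substance, but note that the paper does not actually prove Proposition \ref{parameter of steinberg}: it is quoted as is from Lemma 1.8 of \cite{cunningham2024koszuldualitygeneralizedsteinberg}, and the text immediately uses it as a black box. What you supply is therefore an independent justification rather than a parallel to an argument in the paper. Your route --- identify $St^I$ with the sign character of $\mathcal{H}(G,I)$ (as recalled in Section \ref{s2}), pass through the Kazhdan--Lusztig/Deligne--Langlands parametrization to the datum of a regular nilpotent $N$ with compatible semisimple element, and read off that $\phi_{St}$ is trivial on $W_F$, principal on $SL_2(\mathbb{C})$, hence lands in the open orbit of $\mathcal{U}$ --- is the standard way this lemma is established, and your $w_0$-conjugacy observation correctly disposes of the $\delta_B^{\pm 1/2}$ normalization issue in (3). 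Two small points deserve care. First, the condition $\mathrm{Ad}(s)N=qN$ with $N$ regular pins down $s$ only up to the center of $G^\vee$ (equivalently, up to an unramified twist $St_\chi$); what fixes the trivial twist, i.e.\ assertion (1) for $St=St_1$ rather than for some $St_\chi$, is the action of the length-zero elements $T_o$, $o\in\Omega$, on $St^I$, so you should say a word about that rather than silently choosing the representative $s=\kappa(\mathrm{diag}(q^{1/2},q^{-1/2}))$. Second, for (3) there is a shortcut that avoids all Frobenius bookkeeping: $St$ and the trivial representation are subquotients of one and the same unramified principal series, so they share cuspidal support and hence infinitesimal parameter; your Iwahori--Matsumoto remark is the Hecke-algebra shadow of the same fact. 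With these caveats your argument is sound, and in any case citing Lemma 1.8 of \cite{cunningham2024koszuldualitygeneralizedsteinberg}, as the paper does, already suffices for the statement.
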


    Corollary \ref{main} gives a characterization of the symmetric spaces for which 
    $St_{\chi_0}$ is distinguished. To prove Theorem \ref{theorem dual} we need to
     check that the conditions given by Corollary \ref{main} are equivalent to 
$\phi_{St}$ factoring through $G_X^\vee$.

    \begin{prop}\label{quasi-split dual}
        Assume that $\iota(\mathcal{U}_X)$ intersects the open orbit in $\mathcal{U}$, then $X$ 
is quasi-split.
    \end{prop}

    \begin{proof}
        let $t\in SL_2(\mathbb{C})$ be semisimple, $\iota(t,1)\in G^\vee$ is semisimple 
and commutes with $\iota(\mathcal{U}_X)$, in particular it commutes with some 
 element in the open orbit of $\mathcal{U}$. Thus 
$\iota((t,1))$ must be in the center of $G^\vee$. By Proposition \ref{iota 
defined} it means that $\Delta_0=\emptyset$. This is 
equivalent to $X$ being quasi-split as it implies that for the Borel subgroup 
corresponding to the simple roots $\Delta$, $B\cap \sigma(B)$ is a 
torus.
    \end{proof}

    Let $\mathbf{Z}=Z(\mathbf{G})$ be the center of $\mathbf{G}$.  
$\mathbf{G}/\mathbf{Z}$ can be written as a product of adjoint simple groups 
$\mathbf{G}/\mathbf{Z}=G_1\times...\times G_m$. 

    We need to show that the opposite direction of Proposition \ref{quasi-split 
dual} holds, except in the case, where there is a $\sigma$ invariant $G_i$ 
such that $G_i\cong PGL_{2n+1}$ and $G^\sigma_i\cong P(GL_n\times GL_{n+1})$.

    \begin{prop}\label{unipotent description}
        The set $\iota(\mathcal{U}_X)$ intersects the open orbit of $\mathcal{U}$ if 
and only if the following conditions hold:
    \begin{enumerate}
        \item $X=G/H$ is quasi-split.
        \item There is no $\sigma$ invariant $G_i$, such that $G_i\cong 
PGL_{2n+1}$ and $G^\sigma_i\cong P(GL_n\times GL_{n+1})$.
     \end{enumerate}
     \end{prop}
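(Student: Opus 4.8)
The plan is to reduce to the case that $\mathbf{G}$ is simple adjoint, dispose of the group--type factors directly, settle the non--quasi-split direction by a centraliser argument, and then handle the quasi-split simple case by converting the statement into a question about a principal $SL_2$ and running through the (short) list coming from Takeda's classification.

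First I would note that both sides of the asserted equivalence are multiplicative in $\mathbf{G}/\mathbf{Z}=G_1\times\cdots\times G_m$: the pair $(G^\vee_X,\iota)$ is the product of the corresponding pairs for the factors $X_i$ (each $X_i$ being either $G_i/G^\sigma_i$ for a $\sigma$-stable $G_i$, or a group case $G_i\times G_j/\Delta G_i$ when $\sigma$ interchanges two isomorphic factors), so $\iota(\mathcal{U}_X)$ meets the open orbit of $\mathcal{U}$ exactly when the analogous statement holds for every $i$, and conditions (1)--(2) are manifestly of this form. For a group case $X_i=\mathbf{K}\times\mathbf{K}/\Delta\mathbf{K}$ one has $\Delta_0=\emptyset$, hence $2\rho_0=0$, so $\iota$ kills the $SL_2$-factor and $G^\vee_{X_i}\cong K^\vee$ embeds in $K^\vee\times K^\vee$; by the type~$3$ part of Proposition \ref{roots of G_X} each root subgroup of $K^\vee$ projects nontrivially to \emph{both} factors, so a regular unipotent of $K^\vee$ maps to a pair $(u_1,u_2)$ with each $u_i$ regular in $K^\vee$, which is regular in $K^\vee\times K^\vee$ because $Z_{K^\vee\times K^\vee}(u_1,u_2)=Z_{K^\vee}(u_1)\times Z_{K^\vee}(u_2)$; and here $X_i$ is quasi split while (2) is vacuous, so the proposition holds. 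Thus it remains to treat $\mathbf{G}$ simple adjoint.

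For the non--quasi-split direction, if $X$ is not quasi split then, as in the proof of Proposition \ref{quasi split dual}, some root of $\mathbf{G}$ is fixed by $\sigma$, so $2\rho_0\neq0$ and, by Proposition \ref{iota defined}, $\iota$ is nontrivial on the diagonal torus of $SL_2$; fix $t$ there with $\iota((t,1))$ a non-central semisimple element of $G^\vee$. Every element of $\iota(\mathcal{U}_X)$ lies in $\iota(\{1\}\times G^\vee_X)$, which centralises $\iota(SL_2\times\{1\})$, hence commutes with $\iota((t,1))$; but the centraliser of a regular unipotent of $G^\vee$ contains only central semisimple elements, so no element of $\iota(\mathcal{U}_X)$ is regular, and since condition (2) also fails (so does (1)) the equivalence holds. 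Henceforth assume $\mathbf{G}$ simple adjoint and $X$ quasi split, so $2\rho_0=0$, $\iota$ factors through an embedding $G^\vee_X\hookrightarrow G^\vee$, and (2) says precisely that $(\mathbf{G},\mathbf{H})$ is not $(PGL_{2n+1},P(GL_n\times GL_{n+1}))$. In this setting $\iota(\mathcal{U}_X)$ meets the open orbit of $\mathcal{U}$ if and only if $G^\vee_X$ contains a regular unipotent of $G^\vee$; since the regular orbit of $G^\vee_X$ is dense in $\mathcal{U}_X$ this is equivalent to a \emph{regular} unipotent of $G^\vee_X$ being regular in $G^\vee$, and by Jacobson--Morozov this holds iff the image under $\iota$ of the semisimple element $h_X$ of a principal $SL_2$ of $G^\vee_X$ pairs to $2$ with every simple root of $G^\vee$ (after conjugating it to be dominant). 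Proposition \ref{roots of G_X} identifies the simple roots of $G^\vee_X$ with the $\sigma$-orbits on $\Delta$, each local rank-one piece being one of the three ``no black vertex'' diagrams of Table \ref{T:table} (types $A_1\times A_1$, $A_1$, $A_2$); this pins down $\iota_\ast(h_X)\in X^\ast(T)_{\mathbb Q}$ as an explicit sum of contributions, one per orbit. Going through the classification of quasi-split symmetric pairs with $\mathbf{G}$ simple adjoint and $\mathbf{H}$ split (see \cite{Springer1987TheCO}, \cite{Helminck_k_involutions}), one checks that this pairing equals $2$ for every simple root except in the case $\mathbf{G}=PGL_{2n+1}$, $\mathbf{H}=P(GL_n\times GL_{n+1})$. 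The uniform source of the exception is the parity computation already carried out in Section \ref{s7}: writing the highest root as $\gamma=\sum_\alpha n_\alpha\alpha$, the relevant sign is $(-1)^{\sum_\alpha n_\alpha}=(-1)^{c_{\mathbf G}-1}$, which is $-1$ iff the Coxeter number $c_{\mathbf G}$ is even, i.e. for every simple type other than $A_{2n}$; in type $A_{2n}$ it is $+1$, and correspondingly the pairing of $\iota_\ast(h_X)$ with the ``middle'' simple root of $G^\vee=SL_{2n+1}$ comes out $1$ rather than $2$, so the regular unipotent of $G^\vee_X$ is not regular in $SL_{2n+1}$. This establishes the equivalence.

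The main obstacle is the last step: extracting $G^\vee_X$ and $\iota_\ast(h_X)$ from Takeda's combinatorial data cleanly enough to evaluate the pairing in each type, and turning the ``only in type $A_{2n}$'' assertion into a genuine argument --- ideally by deducing the Coxeter-number dichotomy directly from the diagram list of Table \ref{T:table}, rather than by inspecting all quasi-split symmetric spaces one at a time.
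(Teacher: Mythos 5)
Your reduction to simple adjoint factors, your treatment of the group case, and your centraliser argument for the non--quasi-split direction are all fine and mirror what the paper does (the latter is exactly the argument of Proposition \ref{quasi split dual} transplanted to $\mathcal{U}_X$). The genuine gap is in the sufficiency direction for a simple quasi-split pair: you reduce it, via Jacobson--Morozov, to checking that the dominant conjugate of $\iota_\ast(h_X)$ pairs to $2$ with every simple root of $G^\vee$, and then write ``going through the classification \dots one checks'' --- and you yourself flag this as the main obstacle. As it stands nothing is proved: the pairing is never evaluated for any type, and the proposed ``uniform source'' via the Coxeter-number parity is not a substitute. That parity computation in Section \ref{s7} concerns the value $\gamma(t)$ of the highest root on the element $t$ implementing the involution on $G$ itself (and is used there to decide fullness at the affine node); it does not compute anything about $\iota_\ast(h_X)$ on the dual side. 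Worse, read as a dichotomy ``type $A_{2n}$ versus the rest'' it would also rule out $\mathbf{G}=PGL_{2n+1}$, $\mathbf{H}=PO_{2n+1}$, where condition (2) holds, $G^\vee_X$ contains a regular unipotent of $G^\vee=SL_{2n+1}$, and the conclusion must be positive; the exception depends on the involution (inner versus outer), not only on the type of $\mathbf{G}$.

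The paper avoids any classification sweep by a purely local, rank-one argument that you could adopt: quasi-splitness gives $\Delta_0=\emptyset$, so the diagram of each simple root $\alpha$ (Definition \ref{diagram}) has no black vertices and is one of the three entries of Table \ref{T:table}; condition (2) excludes the $A_2$ diagram (no.\ 5), and in the two remaining cases Proposition \ref{roots of G_X} exhibits explicit elements of $\mathcal{N}_X$ --- $n_\alpha(x)+n_{\sigma(\alpha)}(x)$ in the $A_1\times A_1$ case, $n_\alpha(x)$ in the $A_1$ case --- whose component along the simple root space of $\alpha$ is nonzero. Summing these contributions over the $\sigma$-orbits on $\Delta$ produces an element of $\mathcal{N}_X$ of the form $\sum_{\alpha\in\Delta}n_\alpha(x_\alpha)$ with all $x_\alpha\neq 0$, i.e.\ a regular nilpotent of $G^\vee$, with no need to identify $G^\vee_X$ or compute weighted Dynkin diagrams case by case; the converse then only requires the single computation $G^\vee_X=Sp_{2n}\subset SL_{2n+1}$ for the excluded pair, where the regular unipotent has Jordan type $(2n,1)$. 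If you want to keep your $h_X$-pairing formulation, you must actually carry out the pairing computation for every quasi-split split-symmetric simple pair, and your write-up does not do this.
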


    \begin{proof}
        We already showed that $X$ being quasi-split is a necessary condition. 
In particular $\Delta_0=\emptyset$.

        Now we assume conditions 1,2 and show that 
$\iota(\mathcal{U}_X)$ intersects the open orbit of $\mathcal{U}$.
        
        Let $\alpha\in\Delta$ be some simple root, its diagram (see Definition 
\ref{diagram}) can not contain black vertices as $\Delta_0=\emptyset$, so it 
must be one of the diagrams of Table \ref{T:table}.

Let $\mathcal{N}_X$ be the nilpotent cone of $G^\vee_X$. 
For every $\alpha\in\Delta$ we have a root space $x_\alpha$
 corresponding to $\alpha$ in the Lie algebra of $G^\vee$. 
 Let $n_\alpha:\mathbb{C}\rightarrow x_\alpha$ be an isomorphism. 
        
        The set $\iota(\mathcal{U}_X)$ intersects the open orbit of $\mathcal{U}$ if 
and only if on the level of Lie algebras $\mathcal{N}_X$ contains an element of 
the form $\sum_{\alpha\in\Delta}n_{\alpha}(a_\alpha)$ for some $a_\alpha\neq 0$.

It is enough to show that for every $\alpha\in\Delta$ there is an element in
$\mathcal{N}_X$ with a non-zero contribution from $x_\alpha$.

        We go over the three cases in Table \ref{T:table}.

        \begin{enumerate}
            \item In the first case $\alpha$ is of type 3 and 
$\alpha_1=\alpha$, $\alpha_2=-\sigma(\alpha)$. Both $\alpha$ and $-\sigma(\alpha)$
are positive simple roots. By Proposition \ref{roots of 
G_X} we have $\{n_{\alpha}(x)+n_{-\sigma(\alpha)}(x)|x\in\mathbb{C}\}\subset 
\mathcal{N}_X$.
            \item In the second case $\alpha$ is of type 2 and 
$\sigma(\alpha)=-\alpha$. By Proposition \ref{roots of G_X} we have 
$\{n_{\alpha}(x)|x\in\mathbb{C}\}\subset \mathcal{N}_X$.
            \item In the third and last case, $\alpha$ is of type 2, 
$\sigma(\alpha)\neq -\alpha$ and $\alpha, \sigma(\alpha)$ are not perpendicular. 
From the classification of involutions (see for example 
\cite{Springer1987TheCO}), this contradicts assumption number 2.
Therefore this case can not happen.
        \end{enumerate}

        For the opposite condition, it is enough to check that in the case of 
$G=PGL_{2n+1}$ and $H=P(GL_n\times GL_{n+1})$, $\iota(\mathcal{U}_X)$ does not 
intersect the open orbit of $\mathcal{U}$. In this case $G^\vee=SL_{2n+1}$ and 
$G^\vee_X=Sp_{2n}$. The result is evident.
    \end{proof}

    Now we can deduce Theorem \ref{theorem dual}.

    \begin{proof}[Proof of Theorem \ref{theorem dual}]

    For the first direction, assume that $\phi_{St}$ factors through $\iota$, 
then by Proposition \ref{parameter of steinberg}, $\iota(\mathcal{U}_X)$ 
intersects the open $G^\vee$ orbit of $\mathcal{U}$. By Proposition 
\ref{unipotent description} and Corollary \ref{main}, $St_{\chi_0}$ 
is $H$ distinguished.

    For the second direction, we assume that $St_{\chi_0}$ is $H$ distinguished 
and we need to show that $\phi_{St}$ factors through $\iota$. By Proposition 
\ref{parameter of steinberg} $\phi_{St}$ is trivial on $W_F$ so we have a map
 from $SL_2(\mathbb{C})$, 
$\phi_{St}:SL_2(\mathbb{C})\rightarrow G^\vee$ and we need to show that 
it factors through $\iota$. 
    
    The parameter $\phi_{St}$ is defined only up to conjugation.
    Fix $x\in \mathbb{C}$, by Corollary \ref{main} and Proposition \ref{unipotent description} 
we can find a conjugate of $\phi_{St}$ and $u\in \mathcal{U}_X$ such that 
$\phi_{St}(\begin{pmatrix}
        1 & x  \\
        0 & 1
        \end{pmatrix})=\iota(u)$. 
        
    The infinitesimal character of $St$ is equal to the infinitesimal character of the trivial 
representation. By Theorem 8.2 of \cite{takeda} the Langlands parameter of the 
trivial representation factors through $\iota$. Furthermore, we can find $t\in 
G^\vee_X$ such that $\phi_{St}(\begin{pmatrix}
        x^\frac{1}{2} & 0  \\
        0 & x^{-\frac{1}{2}}
        \end{pmatrix})=\iota(t)\in T$.  

    Using the Jacobson-Morozov theorem (see for example Section 3 of \cite{JMparameters}) 
we can extend the map sending $\begin{pmatrix}
        1 & x  \\
        0 & 1
        \end{pmatrix}$ to $u$ and $\begin{pmatrix}
        x^\frac{1}{2} & 0  \\
        0 & x^{-\frac{1}{2}}
        \end{pmatrix}$ to $t$, and get a map $SL_2(\mathbb{C})\rightarrow G_X^\vee$ 
        as desired. 
    \end{proof}

    \end{subsection}

\end{section}

\bibliographystyle{alpha}
\bibliography{mybib}

\end{document}